\newcommand{\altair}[1]{\textcolor{blue}{#1}}
\newcommand{\francesco}[1]{\textcolor{red}{#1}}
\newcommand\restr[2]{{
		\left.\kern-\nulldelimiterspace 
		#1 
		\vphantom{\big|} 
		\right|_{#2} 
	}}
\newtheorem{thm}{Theorem}[section]
\newtheorem{lem}[thm]{Lemma}
\newtheorem{prop}[thm]{Proposition}
\newtheorem{cor}[thm]{Corollary}
\newtheorem{thmx}{Theorem}
\newtheorem{claim}[thm]{Claim}
\theoremstyle{definition}
\newtheorem{defi}[thm]{Definition}
\newtheorem{ex}[thm]{Example}
\theoremstyle{remark}
\newtheorem{rem}[thm]{Remark}
\begin{document}

\title{Conjugacy and Centralizers in Groups of Piecewise Projective Homeomorphisms}
	
\author{Francesco Matucci}
\thanks{The first author is a member of the Gruppo Nazionale per le Strutture Algebriche, Geometriche e le loro Applicazioni (GNSAGA) of the Istituto Nazionale di Alta Matematica (INdAM) and gratefully acknowledges the support of the 
		Funda\c{c}\~ao de Amparo \`a Pesquisa do Estado de S\~ao Paulo 
		(FAPESP Jovens Pesquisadores em Centros Emergentes grant 2016/12196-5),
		of the Conselho Nacional de Desenvolvimento Cient\'ifico e Tecnol\'ogico (CNPq 
		Bolsa de Produtividade em Pesquisa PQ-2 grant 306614/2016-2), and of
		the Funda\c{c}\~ao para a Ci\^encia e a Tecnologia  (CEMAT-Ci\^encias FCT projects UID/MULTI/04621/2019 and UIDB/04621/2020) and of the Universit\`a degli Studi di Milano - Bicocca
(FA project ATE-2016-0045 ``Strutture Algebriche'').}
\address{Università degli Studi di Milano-Bicocca, Italy}
\email{\href{mailto:francesco.matucci@unimib.it}{francesco.matucci@unimib.it}}

\author{Altair Santos de Oliveira-Tosti}
\thanks{This work is part of the second author's PhD thesis at the University of Campinas. 
		The second author gratefully acknowledges
		support from CNPq (grant 140876/2017-0) and Coordena\c{c}\~ao de Aperfei\c{c}oamento de Pessoal de N\'ivel Superior (CAPES)}
\address{Northern Paraná State University, Brazil}
\email{\href{mailto:altairsot@tutanota.com}{altairsot@tutanota.com}}

\begin{abstract}
	Monod introduced in \cite{Mon2013} 
	a family of Thompson-like groups which provides natural counterexamples to the  von  Neumann-Day conjecture. We construct a characterization of conjugacy and an invariant and use them to compute centralizers in one group of this family.
\end{abstract}

\maketitle

\markboth{Conjugacy and Centralizers in Groups of Piecewise Projective Homeomorphisms}{Francesco Matucci
and Altair Santos de Oliveira-Tosti}

\section{Introduction}

The von Neumann conjecture
states that a group is non-amenable if and only if it contains non-abelian free subgroups. It was formulated in 1957 by Mahlon Marsh Day and disproved in in 1980 by Alexander Ol'shanskii in \cite{Olshanskii1980} through a non-amenable Tarski monster group without any non-abelian free subgroup.
The historically first potential counterexample to such conjecture is Thompson's group $F$ of piecewise-linear homeomorphisms of the real line. The group $F$ does not contain any non-abelian free subgroup, but is still not known to be amenable.

Nicolas Monod introduced 
in \cite{Mon2013}
a class of groups
$H(A)$ depending on a subring $A$ of $\mathbb{R}$ providing
another family of counterexamples of the von Neumann-Day conjecture \cite{Mon2013}. Monod's groups are very natural and ``Thompson-like'' as they are described by
piecewise projective homeomorphisms of the real line. Later on Yash Lodha and Justin Moore \cite{LodhaMoore2016} found that \(H\left(\mathbb{Z}[1/\sqrt{2}]\right)\) contains a finitely presented subgroup, thus providing 
the first torsion-free finitely presented counterexample.


Thompson-like groups 
have been extensively studied from the point of view of decision problems. Decision problems play an important role in group theory, giving a measure of the complexity of groups. A finitely presented group \(G\) 
    	has \textit{solvable conjugacy problem}, if there exists an algorithm which, given 
    	\(y,z\in G\), determines whether or not there is an element \(g\in G\) such 
    	that \(g^{-1}yg=z\). This problem has been studied for many classes of groups
    	and is generally unsolvable. 
    	The conjugacy problem has been studied for several Thompson-like groups 
    	\cite{BarDunRob2016,BelkMat2014,BurMatVent2016,GillShort2013,GubaSapir1997,Higman1974,KassMat2012, Mat2010,Robertson2019,Salazar2010}.
    	Monod's groups share commonalities used in approaches used to study 
    	the conjugacy problem, such as being a topological full group. In this paper we exploit such commonalities to understand conjugacy in Monod's group $H\coloneqq H(\mathbb{R})$ and find a 
    	criterion (Corollary \ref{thm:characterize-conjugacy} below)
    	to establish 
    	conjugacy within the group.
    	
    	Matthew Brin and Craig Squier construct in \cite{BrinSquier2001} 
    a conjugacy invariant in the infinitely generated 
    group $\mathrm{PL}_{+}(\mathbb{R})$ of all piecewise-linear homeomorphisms of the real line with finitely many breakpoints
    and use it to compute
    element centralizers by adapting techniques developed in \cite{Mather1974}.
    This invariant has been revisited later in \cite{GillShort2013,Mat2010} and we adapt it in Theorem \ref{matconj} below to produce our own version of this invariant and compute centralizers:
    \begin{thmx}\label{thm:intro-a}
    	Given \(z\in H\), then
    	\[C_{H}\left(z\right)\cong\left(\mathbb{Z},+\right)^{n}\times\left(\mathbb{R},+\right)^{m}\times H^{k},\]
    	for suitable \(k,m,n\in\mathbb{Z}_{\geq0}\).
    \end{thmx}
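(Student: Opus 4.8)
The plan is to follow the Brin–Squier strategy as adapted by the earlier theorem on the conjugacy invariant (Theorem~\ref{matconj}), decomposing the real line according to the dynamics of $z$ and then analyzing the centralizer component by component. First I would look at the \emph{support} of $z$, the open set $U=\{x\in\mathbb{R}: z(x)\neq x\}$, which is a disjoint union of open intervals (``bumps'') permuted by any element of $C_H(z)$; since $z$ has finitely many breakpoints, there are only finitely many such intervals and finitely many orbits of the fixed-point set under the centralizer action, so after passing to a finite-index subgroup one may assume each centralizing element preserves each maximal interval of support and each complementary interval of fixed points. The restriction map then embeds (a finite-index piece of) $C_H(z)$ into a direct product of the centralizers of $z$ restricted to each of these pieces, and the theorem will follow by identifying each factor as $(\mathbb{Z},+)$, $(\mathbb{R},+)$, or a copy of $H$, together with a bookkeeping argument reassembling the finite-index subgroup into the full centralizer (the permutation action on equivalent bumps contributing only finite wreath-type data, which one checks does not change the isomorphism type of the product because the base is already $H$- or $\mathbb{R}$- or $\mathbb{Z}$-like and absorbs finite extensions appropriately --- here one must be slightly careful, and I expect to arrange things so the finitely many equivalent intervals are handled by the $H^k$ factor).

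\smallskip
The heart of the matter is the local analysis on a single interval. On an interval $J$ of fixed points (a maximal open interval on which $z$ is the identity), an element $g\in C_H(z)$ restricted to $J$ is simply an arbitrary element of $H(J)\cong H$, so such intervals contribute the $H^k$ factors --- provided one verifies that $g$ on $J$ can be chosen independently of its behaviour elsewhere, which uses that $H$ has elements supported in arbitrarily small intervals and the piecewise-projective structure glues compatibly at the (projectively-fixed) endpoints. On a maximal interval $I$ of support, $z|_I$ is a fixed-point-free homeomorphism of $I$ onto itself, hence (by the classical one-dimensional dynamics / Kopell-type argument, here in the projective category) conjugate to a translation; its centralizer inside the group of piecewise-projective homeomorphisms of $I$ is then governed by the ``Mather invariant'' / germ data at the two ends of $I$. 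I would show this local centralizer is infinite cyclic when the germ data at the ends is ``generic'' (the centralizer is generated by the smallest element compatible with both germs, giving a $\mathbb{Z}$), and is a one-parameter group $\cong(\mathbb{R},+)$ precisely when $z|_I$ is actually conjugate within $H(I)$ to a genuine hyperbolic Möbius transformation $x\mapsto\lambda x$ (resp.\ a parabolic $x\mapsto x+c$), whose full centralizer in the projective homeomorphism group is the corresponding one-parameter subgroup $\mathrm{PSL}_2$-stabilizer, intersected with $H$. Counting these intervals gives the exponents $n$ (cyclic ends) and $m$ (one-parameter ends).

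\smallskip
The \textbf{main obstacle} I anticipate is the local centralizer computation on a support interval $I$: one must port the Brin–Squier/Mather argument --- originally for piecewise-\emph{linear} or $C^\infty$ maps --- into the piecewise-\emph{projective} setting, where the ``linear part at a breakpoint'' is replaced by a germ of a Möbius transformation, and show that (i) the only obstruction to an element centralizing $z|_I$ is matching the germ at each of the two ends, (ii) these germ groups are cyclic or one-parameter as claimed, and (iii) the resulting element is genuinely realizable inside $H(\mathbb{R})$ (i.e.\ it has finitely many breakpoints, all with slopes/cross-ratios in the relevant ring, and extends by the identity or compatibly across the endpoints of $I$). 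Step (iii) is where the specific arithmetic of $A=\mathbb{R}$ helps, since every real cross-ratio is allowed, so realizability reduces to the finiteness-of-breakpoints condition, which the conjugating map from Theorem~\ref{matconj} already guarantees. Once the local picture is pinned down, assembling the global statement is a finite direct-product bookkeeping exercise as indicated above, using that the finitely many intervals of support and of fixity exhaust $\mathbb{R}$ up to a finite set of (projectively fixed) breakpoints of $z$.
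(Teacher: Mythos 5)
Your overall strategy coincides with the paper's: decompose $\mathbb{R}$ along the fixed-point set of $z$, let the intervals on which $z$ is the identity contribute $H$-factors, and reduce each support interval to the one-bump analysis (Stair Algorithm when the germ slope is not $1$, Mather invariant in the translation-germ case), each such interval contributing a $(\mathbb{Z},+)$ or $(\mathbb{R},+)$ factor. One step as written, however, is both unnecessary and, if it were needed, incorrect. You allow for a centralizing element to permute the bumps, pass to a finite-index subgroup preserving each bump, and then assert that the resulting finite wreath-type extension ``does not change the isomorphism type of the product'' because the factors absorb finite extensions. That absorption claim is false in general ($\mathbb{Z}\wr C_2\not\cong \mathbb{Z}^{2}$, and likewise for the other factors), so if a nontrivial permutation action existed your reassembly would not recover the stated direct-product form. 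Fortunately the action is always trivial: $\partial\mathrm{Fix}(z)=\{t_0<t_1<\cdots<t_n\}$ is a finite ordered set preserved setwise by every $g\in C_H(z)$, and an order-preserving bijection of a finite chain is the identity, so every centralizing element fixes each $t_i$ and hence each complementary interval. This is exactly how the paper's proof proceeds, with no finite-index passage at all; $C_H(z)$ then maps into the product of the local centralizers, and surjectivity onto that product follows from your (correct) observation that an element supported in one interval extends by the identity elsewhere. With that repair, the rest of your outline, including the dichotomy $(\mathbb{R},+)$ versus $(\mathbb{Z},+)$ on a support interval, matches the paper's Corollaries \ref{thm:centralizers-affine} and \ref{cor:centralizers-translations} and Propositions \ref{monod-centralizer-breakpoints-affine} and \ref{thm:Centralizer-Last-Case}.
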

    
    Several of our results adapt to the general Monod groups $H(A)$ for a subring
    $A$ of $\mathbb{R}$, but there are some for which the proofs given for $H$ do not immediately
    apply to the groups $H(A)$. More precisely, the results of Section \ref{sec:stair} can be easily rephrased and proved for $H(A)$, while those from Sections \ref{sec:mather} and \ref{sec:centralizer} may extend too, but our proofs
    do not apply to $H(A)$.
    
    The work is organized as follows: in Section \ref{sec:monod}, we 
    	define Monod groups and present some basic properties, some of which shared with 
    	Thompson's group \(F\). In Section \ref{sec:stair}, we discuss a characterization of 
    	conjugacy, which is an 
    	adaptation of the \textit{Stair Algorithm}, developed by Kassabov and the 
    	first author in \cite{KassMat2012}.
    	In Section \ref{sec:mather}, we define a conjugacy invariant (the \textit{Mather invariant}) for a class of elements by adapting 
    	techniques developed in \cite{Mather1974}, and we show the 
    	relation between the Stair Algorithm and the Mather invariant.
    	In Section \ref{sec:centralizer}, we compute the centralizer subgroups
    	of elements from \(H\) as applications of the preceding tools. 
    	
	\noindent \textbf{Acknowledgments:} 
		The authors would like to thank	James Belk, Martin Kassabov and Slobodan 
		Tanushevski for helpful conversations about the present work. The authors
		would also like to thank an anonymous referee for helpful suggestions
		that improved the readability of the paper.

	\section{Monod's Groups}\label{sec:monod}
	
	In this section, we will discuss groups of piecewise
	projective orientation-preserving homeomorphism of
	$\mathbb{R}\mathrm{P}^{1}$ which stabilize infinity and 
	discuss some of their properties. These groups are called 
	\textit{Monod's groups} and they were introduced by 
	Nicolas Monod in \cite{Mon2013}.

	We now introduce the notation that will be used in the paper. 
	If \(A\) is a subring of
	\(\mathbb{R}\) with unit, the group of M\"obius transformations
	\(\mathrm{PSL}_{2}\left(A\right)\), under composition of functions, is the group of transformations of the real projective line
	$\mathbb{R}\mathrm{P}^{1}=\mathbb{R}\cup \{\infty\}$
	of the form $f\colon t \mapsto \dfrac{at+b}{ct+d}$ for $a,b,c,d \in A$
	where the determinant of the associated matrix $M_f=
	\left(
	\begin{matrix}
	a & b \\
	c & d
	\end{matrix}
	\right)
	$ is equal to $1$. We say that $f$ is \textbf{hyperbolic} if $|\mathrm{tr}(M_f)|>2$.
	We consider the group $\mathrm{PPSL}_2(A)$ of piecewise projective homeomorphisms of $\mathbb{R}\mathrm{P}^{1}$ with multiplication given by composition of functions. We say that \(f\in \mathrm{PPSL}_2(A)\) 
	if there are finitely many 
	points \(t_{0},t_{1},\ldots,t_{n+1}\in\mathbb{R}\mathrm{P}^{1}\)
	so that on each interval \((-\infty,t_0]\), $[t_{i},t_{i+1}]$, \(i=0,1,\ldots,n-1\) and $[t_n,\infty)$ the map is a M\"obius transformation
	\[f\colon t \mapsto \dfrac{a_it+b_i}{c_it+d_i},
	\]
	\noindent where \(a_id_i-c_ib_i=1\), for suitable \(a_i,b_i,c_i,d_i \in A\). \emph{Monod's group} $H(A)$ is the subgroup of $\mathrm{PPSL}_2(A)$ where \(f(\infty)=\infty\) and the points $t_0, \ldots, t_n$ lie in the set \(\mathcal{P}_{A}\) 
	of fixed points of hyperbolic M\"obius transformations in
	\(\mathrm{PSL}_{2}\left(A\right)\). In the case $A=\mathbb{R}$ we simply write $H(\mathbb{R})=H$.
	We say that a point $t_{0}\in \mathcal{P}_{A}$ 
	is a \textbf{breakpoint} of $f\in\mathrm{PPSL}(A)$ if 
	there exists an $\varepsilon >0$ such that
	there do not exist $a,b,c,d \in A$, where
	$ad-cb=1$ and $f(t)= \dfrac{at+b}{ct+d}$
	on $(t_{0}-\varepsilon,t_{0}+\varepsilon)$.	

	One of the requirements to adapt the Stair Algorithm to this setting is to be able to simultaneously send a tuple of intervals to another such tuple, which means having a form of transitivity.
	We need \(H\) to act order \(k\)-transitively on 
	\(\mathbb{R}\mathrm{P}^{1}\) and this is a property shared with Thompson's group 
	\(F\). The proof of the following result is analogous to the one for $\mathrm{PL}_+(\mathbb{R})$.
	
	\begin{lem}\label{monod-transitive}
	Let \(t_{1}<t_{2}<\ldots<t_{k}\) and \(s_{1}<s_{2}<\ldots<s_{k}\) be elements 
		from \(\mathbb{R}\mathrm{P}^{1}\setminus \{\infty\}\). Then there exists 
		\(f\in H\) such that \(f(t_{i})=s_{i}\), for all \(i=1,2,\ldots,k\).
	\end{lem}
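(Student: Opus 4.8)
The plan is to reduce the $k$-transitivity statement to a sequence of one-point moves, and prove the one-point case by an explicit construction using hyperbolic Möbius transformations whose fixed points lie in $\mathcal{P}_A$. First I would observe that it suffices to treat the case $k=1$: given the general tuples $t_1<\dots<t_k$ and $s_1<\dots<s_k$, one builds $f$ by working on each interval $(t_i,t_{i+1})$ (and the two unbounded ones, which are glued at $\infty$) independently, provided one can send a single interior point of a given interval to any other prescribed interior point while fixing the endpoints. So the crux is: given an open interval $(a,b)\subseteq\mathbb{R}\mathrm{P}^1\setminus\{\infty\}$ with $a,b\in\mathcal{P}_\mathbb{R}\cup\{\pm\infty\}$ and points $p,q\in(a,b)$, find $g\in H$ fixing $a$ and $b$ (and everything outside $(a,b)$) with $g(p)=q$.

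**Next I would carry out the one-interval construction.** The natural tool is a hyperbolic element of $\mathrm{PSL}_2(\mathbb{R})$ with repelling/attracting fixed points exactly at $a$ and $b$: such an element exists because for any two distinct points of $\mathbb{R}\mathrm{P}^1$ there is a hyperbolic Möbius transformation fixing them (conjugate the standard diagonal map $t\mapsto \lambda^2 t$ by a Möbius map sending $\{0,\infty\}$ to $\{a,b\}$), and for $A=\mathbb{R}$ there is no arithmetic obstruction, so in particular $a,b\in\mathcal{P}_\mathbb{R}$ automatically once they are named as endpoints — this is where the hypothesis $t_i,s_i\neq\infty$ and the freedom of $A=\mathbb{R}$ are used. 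Iterating (and taking inverses of) such a hyperbolic map one moves $p$ monotonically through the whole interval $(a,b)$, hence one can overshoot $q$; then one finishes by composing with a piecewise-projective homeomorphism supported in a small subinterval, or more cleanly one notes that the group generated by such hyperbolic elements already acts transitively on $(a,b)$. Finally, one defines $f$ on $\mathbb{R}\mathrm{P}^1$ to be the identity outside the relevant intervals and to be the constructed map on each $(t_i,t_{i+1})$; since the breakpoints introduced are fixed points of hyperbolic elements of $\mathrm{PSL}_2(\mathbb{R})$, i.e.\ lie in $\mathcal{P}_\mathbb{R}$, we indeed have $f\in H$, and $f(\infty)=\infty$ because $f$ is the identity near $\infty$ (after possibly first moving the points away from a neighborhood of $\infty$, which the case $t_i\neq\infty$ permits).

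**The main obstacle** I anticipate is bookkeeping the breakpoint condition: one must make sure every junction point between consecutive Möbius pieces of the final $f$ lies in $\mathcal{P}_\mathbb{R}$, not merely in $\mathbb{R}$. Since $\mathcal{P}_\mathbb{R}$ is exactly the set of points that are fixed by some hyperbolic element of $\mathrm{PSL}_2(\mathbb{R})$ — and every real number is such a fixed point (the matrix $\bigl(\begin{smallmatrix}1 & 0\\ r & 1\end{smallmatrix}\bigr)$-type conjugates, or simply diagonalizing with eigenvalues $\lambda\neq\lambda^{-1}$, show $\mathcal{P}_\mathbb{R}=\mathbb{R}$) — this obstruction evaporates for $A=\mathbb{R}$, which is precisely why the lemma is stated for $H$ and why the authors remark the proof is "analogous to the one for $\mathrm{PL}_+(\mathbb{R})$." I would therefore state and use $\mathcal{P}_\mathbb{R}=\mathbb{R}\mathrm{P}^1\setminus\{\infty\}$ (or cite it from Section \ref{sec:monod}) at the outset, after which the argument is the standard Thompson-group-style "push a point across an interval" construction, done here with projective rather than linear pieces.
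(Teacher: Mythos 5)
Your argument is correct in substance but follows a genuinely different route from the paper's. The paper does not reduce to one-point moves: it invokes the $2$-transitivity of $\mathrm{PSL}_2(\mathbb{R})$ on $\mathbb{R}\mathrm{P}^1$ to choose, for each $i$, a single M\"obius transformation $f_i$ with $f_i(t_i)=s_i$ and $f_i(t_{i+1})=s_{i+1}$, and then glues these $k-1$ pieces together with two affine maps on $(-\infty,t_1]$ and $[t_k,+\infty)$; the breakpoints of the resulting map are exactly the $t_i$, which lie in $\mathcal{P}_{\mathbb{R}}=\mathbb{R}$, so the map is in $H$ in one step. Your route --- fixing two endpoints and moving an interior point via the hyperbolic one-parameter stabilizer of the pair, then composing such moves --- is the Thompson-style argument and also works; your ``more cleanly'' alternative is in fact the right way to finish, since the stabilizer of $\{a,b\}$ already acts transitively on $(a,b)$, so a single hyperbolic element sends $p$ to $q$ and no iterate-and-overshoot step is needed. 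One point to tighten: the reduction to $k=1$ cannot literally be done ``on each interval $(t_i,t_{i+1})$ independently,'' since the source intervals $(t_i,t_{i+1})$ and the target intervals $(s_i,s_{i+1})$ are different; you need the standard induction, first sending $t_1\mapsto s_1$ with an affine (equivalently, $\infty$-fixing hyperbolic) piece and then moving the current image of $t_{i+1}$ to $s_{i+1}$ inside $(s_i,+\infty)$ while fixing $s_i$ and $\infty$. As for what each approach buys: the paper's is shorter because $2$-transitivity of $\mathrm{PSL}_2(\mathbb{R})$ is available off the shelf when $A=\mathbb{R}$, while yours isolates the interval-supported one-point move, which is the more flexible statement and makes transparent exactly where the argument breaks for general $H(A)$ (one needs the relevant endpoints to lie in $\mathcal{P}_A$), the issue the authors flag in the remark following the lemma.
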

\begin{proof}
	For all $i\in\left\{1,2,\ldots,k-1\right\}$, 
	let us consider the intervals $[t_{i},t_{i+1}]$ and
	$[s_{i},s_{i+1}]$. Since $\mathrm{PSL}_2(\mathbb{R})$ 
	is $2$-transitive on $\mathbb{R}\mathrm{P}^{1}$ 
	(see \cite[Theorem 5.2.1 $(ii)$]{JonSin1987})
	there exists an element $f_{i}\in \mathrm{PSL}_2(\mathbb{R})$ 
	such that
	\begin{displaymath}
	f_{i}(t_{i})=s_{i}\;\;\textrm{and}\;\;f_{i}(t_{i+1})=s_{i+1}.
	\end{displaymath}
	\noindent 
	Thus, it is enough to glue together these maps with two
	functions $f_{0},f_{k}\in\mathrm{PSL}_2\left(\mathbb{R}\right)$ 
	defined on $\left(-\infty,t_{1}\right]$ and $\left[t_{k},+\infty\right)$,
	respectively, as
	\begin{displaymath}
	f_{0}(t)=\dfrac{a_{0}t+b_{0}}{d_{0}}\;\;\textrm{and}\;\;f_{k+1}(t)=\dfrac{a_{k}t+b_{k}}{d_{k}},
	\end{displaymath}
	\noindent 
	where $a_{0}d_{0}=a_{k}d_{k}=1$ and
	$a_{0},b_{0},d_{0},a_{k},b_{k},d_{k}$ are chosen in such 
	way that $f_{0}(t_{1})=s_{1}$ and $f_{k}(t_{k})=s_{k}$.
	To finish, we construct the following element from $H$
	\[
	f(t)\coloneqq\begin{dcases}
	f_{0}(t),&\;\textrm{if}\;t\in\left(-\infty,t_{1}\right]\\
	f_{i}(t),&\;\textrm{if}\;t\in\left[t_{i},t_{i+1}\right]\\
	f_{k}(t),&\;\textrm{if}\;t\in\left[t_{k},+\infty\right)
	\end{dcases}
	\]
	for $i\in\left\{1,2,\ldots,k-1\right\}$, so that $f(t_{i})=s_{i}$, for all $i\in\left\{1,2,\ldots,k\right\}$.
\end{proof}

\begin{rem}
The proof that Lemma \ref{monod-transitive} is true for $H$ 
does not immediately carry over to $H(A)$, for a subring $A$ of $\mathbb{R}$, as we are not aware of a transitivity result for
fixed points of hyperbolic Mobi\"us transformations. In this paper we sometimes make use of Lemma \ref{monod-transitive} and, in these instances, our proofs do not immediately carry over to $H(A)$, although
it is not clear that they cannot be achieved through a different route. Several of the results
of this paper carry over to $H(A)$, while for others we cannot immediately say that they do.
\end{rem}
	If \(f\in H\left(A\right)\), there are finitely 
	many points \(t_{1},t_{2},\ldots,t_{n} \in \mathcal{P}_{A}\) such that on each 
	interval \(\left(-\infty,t_{1}\right]\), \(\left[t_{i},t_{i+1}\right]\) for $i=1,\ldots,n-1$, and \(\left[t_{n},+\infty\right)\) we have
	\(f\colon t \mapsto \left(a_it+b_i\right)/\left(c_it+d_i\right)\), where 
	\(a_id_i-c_ib_i=1\), for suitable \(a_i,b_i,c_i,d_i\in A\). Since $f(\pm \infty)=\pm \infty$, 
	we must have $c_1=c_n=0$ and so
	 \(f\colon t \mapsto (a_0t+b_0)/d_0\quad\textrm{and}\quad f\colon t \mapsto (a_nt+b_n)/d_n\)
	on \(\left(-\infty,t_{1}\right]\) and \(\left[t_{n},+\infty\right)\), 
	respectively, where \(a_{0}d_{0}=1=a_{n}d_{n}\), for
	\(a_{0}, a_{n}, b_{0},b_{n}\in A\). Then we can say that elements in 
	\(H\left(A\right)\) have \textbf{affine germs} at \(\pm\infty\). In other words, 
	when \(t\in\left(-\infty,t_{1}\right]\) we rewrite \(f\) in this interval as 
	\(f\left(t\right)=a_{0}^{2}t+a_{0}b_{0}\), for all 
	\(t\in \left(-\infty,t_{1}\right]\), since \(a_{0}d_{0}=1\). Similarly, we can 
	rewrite \(f\) as \(f\left(t\right)=a_{n}^{2}t+a_{n}b_{n}\), for all
	\(t\in \left[t_{n},+\infty\right)\), since \(a_{n}d_{n}=1\). 
	\begin{rem}\label{Units-Translations}\cite{BurLodRee2018}
		Notice that, for all elements in \(H\left(A\right)\), the germs at infinity 
		satisfy that the slopes \(a_{0}^{2}\) and \(a_{n}^{2}\) are units of the ring 
		\(A\). Thus, if the only units of \(A\)	are \(\pm1\), the first and last 
		parts of maps in \(H\left(A\right)\) are translations. For instance, if 
		\(A = \mathbb{Z}\), the only possibility is that \(a_{0}^{2}=a_{n}^{2}=1\).
	\end{rem}
	
	A property that is inherently used while studying the conjugacy problem in the works \cite{KassMat2012,Mat2010} which we will
	adapt to work for Monod's group \(H\) is that the Thompson-Stein groups $\mathrm{PL}_{A,G}(I)$,
	defined for a subring $A$ of $\mathbb{R}$ and a subgroup $G$ of the positive units of $A$, are  full groups.
	
	\begin{defi}\index{Full group}
	Let $G$ be a group of homeomorphisms of
	some topological space $X$.
	\begin{itemize}
		\item[(a)] A homeomorphism $h$
		of $X$ \textbf{locally agrees} 
		with $G$ if for every point $p\in X$,
		there exists a neighborhood $U$ of $p$
		and an element $g\in G$ such that 
		\[\restr{h}{U}=\restr{g}{U}.\]
		We denote the set of all homeomorphisms
		of $X$ which locally agree with $G$
		by $\left[G\right]$;
		\item[(b)] The group $G$ is a
		\textbf{full} if every
		homeomorphism of $X$ 
		that locally agrees	with $G$ 
		belongs to $G$. In other words, 
		$G$ is a full group if 
		$G=\left[G\right]$.
	\end{itemize} 
\end{defi} 
\begin{lem}
	Monod's group $H(A)$ is a full group for any subring $A$ of $\mathbb{R}$.
\end{lem}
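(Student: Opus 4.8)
The plan is to show that any homeomorphism $h$ of $X=\mathbb{R}\mathrm{P}^{1}$ which locally agrees with $H(A)$ is itself piecewise projective over $A$, fixes $\infty$, and has all of its breakpoints in $\mathcal{P}_{A}$. First, since $h$ locally agrees with $H(A)$ at the point $\infty$, there are $g\in H(A)$ and a neighbourhood $U$ of $\infty$ with $\restr{h}{U}=\restr{g}{U}$; as $g(\infty)=\infty$ this already gives $h(\infty)=\infty$, and since $g$ has affine germs at $\pm\infty$ we may choose $M>0$ so that $h(t)=a_{0}^{2}t+a_{0}b_{0}$ for $t<-M$ and $h(t)=a_{n}^{2}t+a_{n}b_{n}$ for $t>M$. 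In particular $h$ coincides with a single M\"obius transformation with coefficients in $A$ on each of $(-\infty,-M)$ and $(M,+\infty)$. By analogy with the definition in the text, call $p\in\mathbb{R}$ a \emph{breakpoint of $h$} if there is no $\varepsilon>0$ and $a,b,c,d\in A$ with $ad-bc=1$ and $h(t)=(at+b)/(ct+d)$ on $(p-\varepsilon,p+\varepsilon)$.

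Next I would localize. Fix $p\in\mathbb{R}$ and choose $g_{p}\in H(A)$ and an open interval $U_{p}\ni p$ with $\restr{h}{U_{p}}=\restr{g_{p}}{U_{p}}$. Since $g_{p}$ has only finitely many breakpoints, after shrinking $U_{p}$ we may assume that the only possible breakpoint of $g_{p}$ in $U_{p}$ is $p$ itself; then on $U_{p}\setminus\{p\}$ the map $h$ agrees with the M\"obius pieces of $g_{p}$ and so has no breakpoints. Hence the breakpoints of $h$ are isolated, and together with the previous paragraph (no breakpoints near $\infty$) and compactness of $\mathbb{R}\mathrm{P}^{1}$, $h$ has only finitely many breakpoints $t_{1}<t_{2}<\cdots<t_{n}$. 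Moreover, if $p$ is a breakpoint of $h$ then $p$ must be a breakpoint of $g_{p}$ — otherwise $h=g_{p}$ would be a single M\"obius transformation with coefficients in $A$ near $p$ — and since $g_{p}\in H(A)$ all of its breakpoints lie in $\mathcal{P}_{A}$; therefore $t_{1},\dots,t_{n}\in\mathcal{P}_{A}$.

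Finally, on each of the open intervals $(-\infty,t_{1}),(t_{1},t_{2}),\dots,(t_{n-1},t_{n}),(t_{n},+\infty)$ the homeomorphism $h$ has no breakpoint, so every point of such an interval has a neighbourhood on which $h$ coincides with some element of $\mathrm{PSL}_{2}(A)$; as two M\"obius transformations that agree on a nonempty open set coincide and the interval is connected, $h$ equals a single element of $\mathrm{PSL}_{2}(A)$ on that interval, and by continuity on its closure. Thus $h$ is a piecewise projective orientation-preserving homeomorphism of $\mathbb{R}\mathrm{P}^{1}$ with finitely many breakpoints, it fixes $\infty$, and its breakpoints lie in $\mathcal{P}_{A}$; that is, $h\in H(A)$. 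Since trivially $H(A)\subseteq[H(A)]$, we get $H(A)=[H(A)]$, so $H(A)$ is full. The argument is essentially routine; the only points that need care are the treatment of the point $\infty$ — using that elements of $H(A)$ have affine germs there, so that $\infty$ itself never contributes a breakpoint nor an accumulation of breakpoints — and the use of the rigidity of M\"obius transformations to glue the purely local data into a single projective map with $A$-coefficients on each piece.
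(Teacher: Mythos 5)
Your argument is correct and follows essentially the same route as the paper's proof, which simply notes that compactness of $\mathbb{R}\mathrm{P}^{1}$ forces finitely many breakpoints and that local agreement near $\infty$ gives affine germs there. You have merely spelled out the details the paper leaves implicit (isolation of breakpoints, membership of breakpoints in $\mathcal{P}_{A}$, and the rigidity of M\"obius maps used to glue local data into single $A$-coefficient pieces).
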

\begin{proof}
Given a subring $A$ of $\mathbb{R}$ and $h \in [H(A)]$, compactness of $\mathbb{R}\mathrm{P}^{1}$ implies that $h$ has only finitely many breakpoints, as it locally agrees with maps from $H(A)$.
	Moreover, $h$ must have affine germs around $\pm \infty$, 
 since it coincides with some element from $H(A)$ and so $h\in H(A)$. Therefore $[H(A)]\subseteq H(A)$. and so $H(A)$ is a full group.
\end{proof}

We finally recall another property of Monod's group which is shared with Thompson's
group $F$ (see \cite{Mon2013}).
\begin{lem}
\label{thm:torsion-free}
Monod's group $H(A)$ is torsion-free for any subring $A$ of $\mathbb{R}$.
\end{lem}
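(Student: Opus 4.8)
The plan is to use the fact, inherent in the definition of $H(A)$, that every $f\in H(A)$ is an \emph{orientation-preserving} homeomorphism of $\mathbb{R}\mathrm{P}^1$ fixing $\infty$, and to combine this with the elementary fact that an increasing self-homeomorphism of $\mathbb{R}$ that moves some point cannot have finite order. First I would record that, since $f(\infty)=\infty$, restriction to $\mathbb{R}\mathrm{P}^1\setminus\{\infty\}=\mathbb{R}$ gives a self-homeomorphism of $\mathbb{R}$; because $f$ preserves the orientation of the circle $\mathbb{R}\mathrm{P}^1$ and fixes $\infty$, this restriction preserves the orientation of the interval $\mathbb{R}$, i.e.\ it is strictly increasing. (Alternatively, on the unbounded pieces $(-\infty,t_1]$ and $[t_n,+\infty)$ the map is affine with positive slope $a_0^2$, resp.\ $a_n^2$, by the discussion of affine germs above, and a piecewise-M\"obius homeomorphism of $\mathbb{R}$ that is increasing near $-\infty$ is increasing everywhere.)

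Next I would argue by contradiction: suppose $f\in H(A)$ has finite order $k\ge 1$ but $f\neq\mathrm{id}$. Then there is a point $x_0\in\mathbb{R}$ with $f(x_0)\neq x_0$; without loss of generality $f(x_0)>x_0$, the case $f(x_0)<x_0$ being symmetric (or handled by replacing $f$ with $f^{-1}$, which also has order $k$). Since $f$ is strictly increasing, applying $f$ repeatedly yields $f^2(x_0)>f(x_0)>x_0$ and, by induction, $f^{j}(x_0)>f^{j-1}(x_0)>\cdots>x_0$ for every $j\ge 1$. In particular $f^{k}(x_0)>x_0$, so $f^{k}\neq\mathrm{id}$, contradicting the choice of $k$. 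Hence no nontrivial element of $H(A)$ has finite order, and $H(A)$ is torsion-free.

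There is essentially no hard step here: the only thing that genuinely uses the structure of $H(A)$ (as opposed to an arbitrary group of homeomorphisms of $\mathbb{R}$) is the observation that elements fix $\infty$ and therefore may be regarded as increasing homeomorphisms of the line; everything after that is the standard monotonicity argument, the same one that shows Thompson's group $F$ is torsion-free. Since this argument refers only to the defining properties of $H(A)$ and not to Lemma~\ref{monod-transitive}, it applies verbatim to $H(A)$ for every subring $A$ of $\mathbb{R}$.
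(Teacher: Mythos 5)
Your proof is correct. The paper does not actually supply an argument for this lemma; it simply cites Monod's original article, where torsion-freeness is established by exactly the observation you make: elements fix $\infty$ and preserve orientation, hence restrict to strictly increasing homeomorphisms of $\mathbb{R}$, and a nontrivial increasing homeomorphism moving a point $x_0$ satisfies $f^k(x_0)>x_0$ for all $k\ge 1$. So your write-up is a complete, self-contained version of the standard argument, and your remark that it needs none of the transitivity machinery (so applies to every subring $A$) is accurate.
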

	
	For more properties of Monod's groups, we encourage the interested reader to consult the references 
		\cite{BurLodRee2018, Mon2013}.

	\section{The Stair Algorithm}\label{sec:stair}
	
	In this section, we adapt the \textit{Stair Algorithm} developed in 
	\cite{BurMatVent2016,KassMat2012}. If there exists a conjugator between two elements, this algorithm 
	allows us to construct such conjugator from an ``initial germ''. 
	The algorithm constructs the conjugator by looking at necessary conditions
	it should satisfy and building it piece by piece until we reach the so-called ``final box'' and ending the construction. We show that, if a conjugator exists, it has to coincide with the homeomorphism we construct. In the
    following, if \(y,z\in H\) and there is a \(g\in H\) such that \(g^{-1}yg=z\), we will write \(y^{g}=z\).

	
	\subsection{Notations} Let us fix some notation. Given a $h \in H$, we define the \textbf{support} of $h$ to be $\mathrm{supp}(f)=\{t \in \mathbb{R} \mid f(t) \ne t\}$.
	
	\begin{defi}
		Let \(G\) be any subset of \(H\). We define \(G^{>}\) as the subset of \(G\)
		of all maps that lie above the diagonal, that is,
		\[G^{>}\coloneqq\left\{g\in G\mid  g\left(t\right)>t,\; t\in\mathbb{R} \right\}.\]
		Similarly, we define \(G^{<}\). A homeomorphism $g \in G^{>} \cup G^{<}$
		is called a \textbf{one-bump function}. 
		Moreover, for every \(-\infty\leq p<q \leq+\infty\), we define
		\(G\left(p,q\right)\) as the set of elements of \(G\) with support contained inside 
		\(\left(p,q\right)\), that is,
		\[G(p,q)\coloneqq\left\{g\in G \mid g\left(t\right)=t,\; t\notin(p,q) \right\}.\]
		We also define the subset
		\[G^{>}(p,q)\coloneqq\left\{g\in G \mid g\left(t\right)=t,\;\forall\; t\notin(p,q)\;\text{and}\;g\left(t\right)>t,\;\forall\;t\in(p,q)\right\}.\]
		Analogously, we define \(G^{<}\left(p,q\right)\). If $g \in G^{>}\left(p,q\right) \cup G^{<}\left(p,q\right)$, we say that \(g\) 
		is a \textbf{one-bump function on \(\left(p,q\right)\)}.
	\end{defi}
	\begin{rem}
		If \(G\) is a subgroup, then \(g\in G^{>}\) if, and only if, \(g^{-1}\in G^{<}\).
	\end{rem}
	\indent Since elements \(f\in H\) are defined for all real numbers, we will define
	suitable ``boxes'' for real numbers around \(\pm\infty\). In order to work 
	with numbers sufficiently close to \(\pm\infty\), we give the next 
	definition.
	\begin{defi}
		A property \(\mathcal{P}\) holds \textbf{for \(t\) negative sufficiently 
		large} (respectively, \textbf{for \(t\) positive sufficiently large}) to mean 
		that there exists a real number \(L<0\) such that \(\mathcal{P}\) holds for 
		every \(t\leq L\) (respectively, there is a positive real number \(R\) so 
		that \(\mathcal{P}\) holds for every \(t\geq R\)).
	\end{defi}
	\subsection{Necessary Conditions}
	In \cite{KassMat2012}, Kassabov and the first author worked with the initial and final slopes of 
	elements from \(\mathrm{PL}_{+}\left(\left[0,1\right]\right)\). If two 
	elements from \(\mathrm{PL}_{+}\left(\left[0,1\right]\right)\) are conjugate, they coincide on suitable	``boxes'' around $0$ and $1$. Let us 
	define similar concepts for elements from \(H\).
	
	Given \(y\in H\), let us denote the slope of \(y\) for \(t\) negative 
	sufficiently large as
	\[y'\left(-\infty\right)\coloneqq \lim\limits_{t\rightarrow -\infty}y'\left(t\right).\]
	Similary, we denote the slope of \(y\) for \(t\) positive sufficiently large as 
	\(y'\left(+\infty\right)\). However, if two elements from \(H\)	have the same 
	slopes for \(t\) negative sufficiently large, they do not necessarily coincide 
	around \(-\infty\). Thus, in order to ensure that two elements coincide for \(t\) 
	negative sufficiently large, we give the following definition.
	\begin{defi}\label{def:germs}
		We define the \textbf{germ of \(y \in H\) at \(-\infty\)} as the pair
		\[y_{-\infty} \coloneqq \left(y'\left(-\infty\right),y\left(L\right)-y'\left(-\infty\right)L\right),\]
		\noindent where \(L\) is the largest real number for which \(y\) is the 
		affine map with slope \(y'\left(-\infty\right)\) on the interval 
		\(\left(-\infty,L\right]\). If $y$ is affine on $\mathbb{R}$, then we $L$ can taken to be any real number.
		We call \(y_{-\infty}\) the \textbf{initial germ}.
		Analogously, we define the \textbf{final germ} \(y_{+\infty}\). 
	\end{defi}
	\indent We remark that, for an element \(y\in H\), the initial germ \(y_{-\infty}\) and the final germ \(y_{+\infty}\) are 
	elements of the \textbf{affine group} \(\mathrm{Aff}\left(\mathbb{R}\right)\), 
	which is defined as the semidirect product 
	\(\mathrm{Aff}\left(\mathbb{R}\right)\coloneqq\mathbb{R}_{>0}\ltimes\mathbb{R}\), 
	where \(\mathbb{R}_{>0}\) denotes the multiplicative group 
	\(\left(\mathbb{R}_{>0},\cdot\right)\) and \(\mathbb{R}\) denotes the additive 
	group \(\left(\mathbb{R},+\right)\). The operation of this group is
	\((a,b)(c,d)\coloneqq (ac,b+ad)\). The identity element is \(\left(1,0\right)\)
	and inverses are given by \(\left(a,b\right)^{-1}=\left(a^{-1},-a^{-1}b\right)\).
	 
	
	The following observation on slopes is the first necessary condition we test for conjugacy. Its proof is a straightforward calculation.
	\begin{lem}\label{monod-initialslope}
		Let \(y,z\in H\) be such that \(y^{g}=z\), for some \(g\in H\). Then for \(t\) 
		negative (respectively, positive) sufficiently large we have 
		\(y'\left(-\infty\right)=z'\left(-\infty\right)\) 
		(respectively, \(\left(y'\left(+\infty\right)=z'\left(+\infty\right)\right)\)).
	\end{lem}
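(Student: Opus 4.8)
The plan is to reduce the claim to the elementary fact that in $\mathrm{Aff}(\mathbb{R}) = \mathbb{R}_{>0} \ltimes \mathbb{R}$ the first coordinate (the slope) is a conjugation invariant, since the projection $\mathrm{Aff}(\mathbb{R}) \to \mathbb{R}_{>0}$ is a homomorphism onto an abelian group. First I would recall that, as observed earlier in Section~\ref{sec:monod}, every element of $H$ has affine germs at $\pm\infty$; in particular this applies to $y$, to $z$, and to the conjugator $g$. So fix a real number $L$ negative sufficiently large such that $y$, $z$ and $g$ are simultaneously affine on $(-\infty, L]$, say $g(t) = a^2 t + b$ there with $a^2 = g'(-\infty) > 0$, and $y(t) = \alpha t + \beta$ there with $\alpha = y'(-\infty) > 0$, while $g^{-1}$ has affine germ $g^{-1}(s) = a^{-2}(s-b)$ at $-\infty$.

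Then I would simply compute the germ of $z = g^{-1} y g$ at $-\infty$. Since $g$ is orientation preserving and fixes $\infty$, for $t$ negative sufficiently large we have $g(t) \le L$, so $y(g(t)) = \alpha(a^2 t + b) + \beta$; by the same monotonicity this value is again negative sufficiently large, so $g^{-1}$ may be evaluated through its affine germ, yielding $z(t) = \alpha t + a^{-2}(\alpha b + \beta - b)$ for $t$ negative sufficiently large. Hence $z'(-\infty) = \alpha = y'(-\infty)$. The argument at $+\infty$ is word for word the same, with $(-\infty, L]$ replaced by a half-line $[R, +\infty)$ on which the three maps are affine.

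I do not anticipate any genuine obstacle; the only point requiring care is the bookkeeping of the phrase \emph{negative sufficiently large}, namely that the affine germ of $g$ carries a left neighborhood of $-\infty$ into the region where $y$ is affine, and then into the region where $g^{-1}$ is affine. This is immediate because affine maps with positive slope are increasing proper self-homeomorphisms of $\mathbb{R}$. Equivalently, and somewhat more conceptually, one may observe that $y \mapsto y_{-\infty}$ and $y \mapsto y_{+\infty}$ are group homomorphisms $H \to \mathrm{Aff}(\mathbb{R})$, so conjugate elements of $H$ have conjugate germs in $\mathrm{Aff}(\mathbb{R})$, and conjugate elements of $\mathrm{Aff}(\mathbb{R})$ share the same slope.
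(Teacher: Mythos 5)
Your proof is correct and matches the paper's intent: the paper dismisses this lemma as ``a straightforward calculation,'' and the computation you carry out (evaluating the germ of $g^{-1}yg$ at $-\infty$ through the affine germs of $g$, $y$, $g^{-1}$) is exactly the calculation that is spelled out in the proof of Lemma~\ref{monod-germsconj} immediately afterwards. Your closing remark that the slope is invariant because $y\mapsto y_{\pm\infty}$ is a homomorphism to $\mathrm{Aff}(\mathbb{R})$ and the slope projection is a homomorphism onto an abelian group is the same conceptual content, just stated explicitly.
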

	
	The next necessary condition we observe if that if
	the conjugacy classes of the germs of 
	\(y,z\in H\) at \(-\infty\), or at \(+\infty\), are different, then \(y\) and 
	\(z\) cannot be conjugate.
	
	\begin{lem}\label{monod-germsconj}
		For any \(y,z\in H\) such that \(y^{g}=z\) for some \(g\in H\), then the conjugacy classes
		$y_{-\infty}^{\mathrm{Aff}\left(\mathbb{R}\right)}$ and $z_{-\infty}^{\mathrm{Aff}\left(\mathbb{R}\right)}$ of $y_{-\infty}$ and $z_{-\infty}$ inside $\mathrm{Aff}\left(\mathbb{R}\right)$ coincide.
		Similarly, we have \(y_{+\infty}^{\mathrm{Aff}\left(\mathbb{R}\right)}= z_{+\infty}^{\mathrm{Aff}\left(\mathbb{R}\right)}\).
	\end{lem}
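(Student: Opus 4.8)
The plan is to realise the assignment $y\mapsto y_{-\infty}$ as a map that intertwines conjugation in $H$ with conjugation in $\mathrm{Aff}(\mathbb{R})$, so that the statement becomes a formal consequence of $g^{-1}yg=z$. First I would set up the local-to-global dictionary. For each $y\in H$ there is, by Definition~\ref{def:germs}, a real number $L_y$ and an affine map $\eta_y\colon t\mapsto y'(-\infty)\,t+\bigl(y(L_y)-y'(-\infty)L_y\bigr)$ with $y(t)=\eta_y(t)$ for all $t\le L_y$. Since $H$ consists of orientation-preserving homeomorphisms, $y'(-\infty)>0$, so $\eta_y$ is an orientation-preserving affine homeomorphism of $\mathbb{R}$, i.e. a genuine element of $\mathrm{Aff}(\mathbb{R})$, and under the identification discussed after Definition~\ref{def:germs} it is precisely $y_{-\infty}$. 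Note that $\eta_y$ is uniquely determined by $y$, since two affine maps agreeing on a half-line agree everywhere; in particular the choice of $L_y$ (and the convention for globally affine $y$) does not affect $\eta_y$.

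Next I would prove the identity $z_{-\infty}=(g_{-\infty})^{-1}\,y_{-\infty}\,g_{-\infty}$ inside $\mathrm{Aff}(\mathbb{R})$. The key point is that $\eta_g$ has positive slope, hence $\eta_g(t)\to-\infty$ as $t\to-\infty$; therefore, for $t$ negative sufficiently large one has simultaneously $t\le L_g$, then $g(t)=\eta_g(t)\le L_y$, then $y(g(t))=\eta_y(\eta_g(t))\le L_{g^{-1}}$. For such $t$, using that $g^{-1}$ coincides with $\eta_g^{-1}$ near $-\infty$,
\[
z(t)=\bigl(g^{-1}\circ y\circ g\bigr)(t)=\eta_g^{-1}\bigl(\eta_y(\eta_g(t))\bigr)=\bigl(\eta_g^{-1}\circ\eta_y\circ\eta_g\bigr)(t).
\]
Thus the affine map that $z$ coincides with near $-\infty$ is $\eta_g^{-1}\circ\eta_y\circ\eta_g$; reading this in $\mathrm{Aff}(\mathbb{R})$ exhibits $z_{-\infty}$ as a conjugate of $y_{-\infty}$ by $g_{-\infty}$, whence $y_{-\infty}^{\mathrm{Aff}(\mathbb{R})}=z_{-\infty}^{\mathrm{Aff}(\mathbb{R})}$. (Depending on the composition convention used for $H$, the conjugator is $g_{-\infty}$ or its inverse, which is immaterial for the equality of conjugacy classes.) The argument at $+\infty$ is verbatim the same, replacing neighborhoods of $-\infty$ by neighborhoods of $+\infty$ and using $\eta_g(t)\to+\infty$ as $t\to+\infty$.

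I expect no serious obstacle here. The only point needing care is the bookkeeping in the displayed computation: one must pick $t$ negative enough that all three of $t$, $g(t)$, and $y(g(t))$ land in the respective affine half-lines $(-\infty,L_g]$, $(-\infty,L_y]$, $(-\infty,L_{g^{-1}}]$ — which is exactly what positivity of the slope of $\eta_g$ guarantees — and to keep straight which affine map plays the role of the conjugator. As a consistency check, composing the germ map with the slope homomorphism $\mathrm{Aff}(\mathbb{R})\to\mathbb{R}_{>0}$ recovers Lemma~\ref{monod-initialslope}.
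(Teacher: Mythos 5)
Your proposal is correct and takes essentially the same route as the paper: both arguments boil down to showing that near $-\infty$ the composition $g^{-1}yg$ agrees with the composition of the affine germs, so that $z_{-\infty}=g_{-\infty}^{-1}\,y_{-\infty}\,g_{-\infty}$ in $\mathrm{Aff}(\mathbb{R})$. The paper performs this as a pair computation $(a^{-2},-a^{-1}b)(a_0^2,a_0b_0)(a^2,ab)$ and labels the domain bookkeeping ``straightforward,'' which is exactly the step you spell out via the positivity of the slope of $\eta_g$.
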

	\begin{proof}
		Assume that \(g_{-\infty}=\left(a^{2},ab\right)\) and \(y_{-\infty}=\left(a_{0}^{2},a_{0}b_{0}\right)\).
		Since $y^g=z$,
		it is straightforward to see that, for \(t\) negative sufficiently large, we have
		that 
		\begin{align*}
		z_{-\infty}=(y^g)_{-\infty}=y_{-\infty}^{g_{-\infty}}=(a^{-2},-a^{-1}b)\cdot(a_{0}^{2},a_{0}b_{0})\cdot
		(a^{2},ab)=\\ \left(a_{0}^{2},a_{0}b_{0}a^{-2}+\left(a_{0}^{2}-1\right)a^{-1}b\right)
		\end{align*}
		Thus	 \(y_{-\infty}^{\mathrm{Aff}\left(\mathbb{R}\right)}= z_{-\infty}^{\mathrm{Aff}\left(\mathbb{R}\right)}\).
		Similarly, we see
		\(y_{+\infty}^{\mathrm{Aff}\left(\mathbb{R}\right)}= z_{+\infty}^{\mathrm{Aff}\left(\mathbb{R}\right)}\).
	\end{proof}
	
 From now on, if \(y_{-\infty}\) and \(z_{-\infty}\)
	are conjugate in \(\mathrm{Aff}\left(\mathbb{R}\right)\), we will denote it by 
	\(y_{-\infty}\sim_{\mathrm{Aff}\left(\mathbb{R}\right)}z_{-\infty}\).

    
	\subsection{Initial and final boxes}
	In this subsection, we see that a possible conjugator between two given 
	elements is determined by its germs inside suitable boxes. 

	\begin{lem}[Initial and final boxes]\label{monod-initialbox}
		Let \(y,z\in H^{>}\left(-\infty,p\right)\) for some \(-\infty<p\leq+\infty\)
		and let \(g\in H\) be such that \(y^{g}=z\). Then there exists a constant \(L\in\mathbb{R}\) (depending on \(y\) and \(z\)) such that 
		\(g\) is affine on the initial box 	\(\left(-\infty,L\right]^{2}\). An 
		analogous result holds, for \(y,z\in H^{>}\left(p,+\infty\right)\) for some 
		\(-\infty\leq p<+\infty\) and a final box \(\left[R,+\infty\right)^{2}\).
	\end{lem}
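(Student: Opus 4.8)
The plan is to exploit the hypothesis that $y$ and $z$ are one-bump functions on $(-\infty,p)$, so in particular both are \emph{strictly increasing} near $-\infty$ and agree there with honest affine maps. First I would record the germ data: write $y_{-\infty}=(a_0^2,a_0b_0)$ and $z_{-\infty}=(c_0^2,c_0d_0)$, and let $L_y,L_z$ be the largest reals on whose initial rays $(-\infty,L_y]$, $(-\infty,L_z]$ the maps $y,z$ are affine. By Lemma \ref{monod-initialslope} the slopes agree, $a_0^2=c_0^2$; call this common slope $\lambda>0$. Since $y,z\in H^{>}$, translating along the $y$-orbit we may use the conjugacy relation $g y = z g$ to transport the domain on which $g$ is ``already known to be affine'' further and further toward $-\infty$. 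The idea, exactly as in \cite{KassMat2012}, is a bootstrapping argument: show first that $g$ has an affine germ at $-\infty$ (this is automatic, since $g\in H$), then show that the ray on which $g$ is affine can be pushed back indefinitely using the functional equation.

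The key steps, in order. (1) Let $L_0<0$ be small enough that $g$ is affine on $(-\infty,L_0]$, say $g(t)=\alpha t+\beta$ there with $\alpha>0$; such an $L_0$ exists because $g\in H$ has finitely many breakpoints. (2) Choose $L$ negative sufficiently large that $y$ is affine with slope $\lambda$ on $(-\infty,L]$, that $z$ is affine with slope $\lambda$ on $(-\infty,g(L_0))$ — wait, rather: choose $L\le L_0$ with $L\le L_y$, and small enough that $g(L)\le L_z$ and $g^{-1}(L_z)$, $y(L)$, $z^{-1}(g(L))$ all lie in the affine rays of the respective maps; finitely many constraints, each of the form ``be below some threshold'', so a single $L$ works. (3) On the box $(-\infty,L]^2$ argue as follows: for $(s,t)$ in this box with $t = g(s)$ a point of the graph of $g$, the relation $z = g y g^{-1}$ together with the fact that $y,z,g$ are all affine on the relevant rays forces $g$ restricted to $(-\infty,L]$ to satisfy $z|_{\text{ray}} \circ g|_{\text{ray}} = g|_{\text{ray}} \circ y|_{\text{ray}}$ as affine maps. (4) Since $y,z$ share slope $\lambda$ near $-\infty$, this is a conjugacy relation in $\mathrm{Aff}(\mathbb{R})$ which, by an elementary computation (and using Lemma \ref{monod-germsconj} to guarantee solvability), pins down the affine map $g$ on $(-\infty,L]$ up to the one-parameter family of affine conjugators — in any case $g$ \emph{is} affine on that ray. (5) Finally, to get the full box $(-\infty,L]^2$ rather than just the graph, observe that $g$ affine on $(-\infty,L]$ means $g((-\infty,L]) = (-\infty,g(L)]$, an interval, so no breakpoint of $g$ lies in $(-\infty,L]$ in either variable once $L$ is chosen as above; shrinking $L$ once more if necessary so that $g(L)\le L$ as well (possible since $\alpha$ and the germ are fixed), the box $(-\infty,L]^2$ is contained in the region where $g$ is a single Möbius — in fact affine — map, which is the claim. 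The final-box statement is the mirror image, replacing $-\infty$ by $+\infty$, $H^{>}(-\infty,p)$ by $H^{>}(p,+\infty)$, and running the same argument on rays $[R,+\infty)$.

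The main obstacle, I expect, is step (2)–(5): making precise the claim that the functional equation $gy=zg$, once all three maps are affine on suitable initial rays, \emph{propagates} affineness of $g$ all the way to $-\infty$, i.e.\ ruling out that a breakpoint of $g$ "escapes" toward $-\infty$ as we iterate. The clean way is to avoid iteration entirely: since $y\in H^{>}(-\infty,p)$, every orbit $\{y^n(t_\star)\}_{n\in\mathbb{Z}}$ of a point $t_\star<p$ accumulates only at the endpoints $-\infty$ and $p$; so a single point $t_\star$ very negative has its whole backward orbit in the affine ray of $y$, and correspondingly $g$ is forced to be affine on a ray that we can take as far back as we like by the "negative sufficiently large" mechanism of Definition \ref{def:germs}. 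The delicate bookkeeping is simply to collect the finitely many "be sufficiently negative" conditions on $L$ — one for each of $y$, $z$, $g$, $y^{-1}$, $z^{-1}$, $g^{-1}$ having its relevant affine ray extend past the images under the others — and check they are jointly satisfiable, which they are because each is an upper bound on $L$.
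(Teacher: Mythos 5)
Your proposal has a genuine gap at its core: the lemma requires the constant $L$ to depend only on $y$ and $z$, but your construction makes $L$ depend on $g$. In step (1) you pick $L_0$ below the leftmost breakpoint of $g$ (which exists since $g\in H$ has an affine germ at $-\infty$), and in step (2) you impose $L\le L_0$ together with further conditions that reference $g$ and $g^{-1}$ (you say so explicitly in your final "bookkeeping" paragraph). With that choice the conclusion "$g$ is affine on $(-\infty,L]$" is true by fiat, so steps (3)--(5) merely compute what the affine germ must be; nothing in the argument rules out the actual danger, namely that a conjugator could have a breakpoint sitting \emph{inside} the common affinity ray of $y$ and $z$ while its graph is still in the box. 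That is the entire content of the lemma, and it is essential for the later use in the Stair Algorithm, where the initial box is fixed from $y$ and $z$ alone before any conjugator is known. You flag this as "the main obstacle," but the "clean way" you offer (backward orbits accumulating at $-\infty$) does not address it: the problem is not a breakpoint escaping to $-\infty$, it is a breakpoint of $g$ located at some finite $\widetilde{L}$ with $\widetilde{L}<\min\{L,g^{-1}(L)\}$ for the $y,z$-determined $L$.

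The paper closes this gap by contradiction. Suppose $g$ is affine on $(-\infty,\widetilde{L}]$ and genuinely M\"obius (with $\bar c\neq 0$) just to the right of the breakpoint $\widetilde{L}$, with $\widetilde{L}<\min\{L,g^{-1}(L)\}$. For an interval's worth of $\sigma>1$ one has $\sigma\widetilde{L}<\widetilde{L}$ while $z(\sigma\widetilde{L})\in(\widetilde{L},L)$ (here $z\in H^{>}$ is used). Evaluating $gz=yg$ at $\sigma\widetilde{L}$ computes the left side through the M\"obius piece of $g$ and the right side through the affine pieces of $g$ and $y$, yielding a polynomial identity in $z(\sigma\widetilde{L})$ valid on an interval; this forces $a^2\bar c=0$, hence $\bar c=0$, contradicting that $\widetilde{L}$ is a breakpoint. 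Some such argument (exploiting that an affine map and a nondegenerate M\"obius map cannot agree on an interval) is what your write-up is missing; without it the proof does not establish the statement as claimed.
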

	
\begin{proof}
	By Lemma \ref{monod-initialslope}, there exists an $L<\min\{0,p\}$ such that \(y'\left(t\right)=z'\left(t\right)\) for \(t\leq L\). Up to replacing $L$ by a suitable $L_1 < L$ we can assume that \(y'\left(t\right)=z'\left(t\right)\) for every 
	\(t\leq L\). 
	Assume that \(g_{-\infty}=\left(a^{2},ab\right)\) and \(y_{-\infty}=\left(a_{0}^{2},a_{0}b_{0}\right)\) then, 
	following the same calculations of Lemma \ref{monod-germsconj}, we have
	\[y\left(t\right)=a_{0}^{2}t+a_{0}b_{0}\;\text{and}\; z\left(t\right)=a_{0}^{2}t+a_{0}b_{0}a^{-2}+a^{-1}b(a_{0}^{2}-1)\]
	for all \(t\leq L\) and for suitable \(a,b\in\mathbb{R}\).
	
	We can rewrite our goal as follows: if we define
	\[\widetilde{L}\coloneqq \sup\left\{r \mid g\;\text{is affine on}\;\left(-\infty,r\right]\right\},\]
	\noindent then \(\widetilde{L}\geq\min\left\{L,g^{-1}\left(L\right)\right\}\). 
	Let us assume the opposite, that is, \(\widetilde{L}<\min\left\{L,g^{-1}\left(L\right)\right\}\)
	and 
	\[g\left(t\right)=\begin{cases}
	a^{2}t+ab,&\;\text{if}\;t\in\left(-\infty,\widetilde{L}\right], \\
	\dfrac{\bar{a}t+\bar{b}}{\bar{c}t+\bar{d}},&\;\text{if}\;t\in\left[\widetilde{L},L_2\right),
	\end{cases}\]
	\noindent for suitable $\bar{a},\bar{b},\bar{c},\bar{d} \in \mathbb{R}$ and $\widetilde{L} < L_2 \le L$ so that $g$ has a breakpoint at $\widetilde{L}$. Without loss of generality, we can assume that $L_2 = L$.
	
	Since \(\widetilde{L}<L<0\) and	\(z \in H^{>}\left(-\infty,p\right)\), we have $L<z(L)$ and so there exists
	a real number \(\sigma>1\) such that \(\sigma\widetilde{L}<\widetilde{L}<L\)
	and \(\widetilde{L}<z(\sigma\widetilde{L})<L\).
	On the other hand, \(\widetilde{L}<g^{-1}\left(L\right)\) and so 
	\(\sigma\widetilde{L}<g^{-1}\left(L\right)\). Thus we have 
	\(g(\sigma\widetilde{L})<L\), which means that \(y\) is affine around 
	\(g(\sigma\widetilde{L})\) and
	\begin{align}\label{linbox-eq1}
		y(g(\sigma\widetilde{L}))&=
		y(a^{2}\sigma\widetilde{L}+ab) =
		a^{2}(a_{0}^{2}\sigma\widetilde{L})+a_{0}^{2}ab+a_{0}b_{0}\nonumber\\
		&=a^{2}(a_{0}^{2}\sigma\widetilde{L}+a^{-2}a_{0}b_{0}+a^{-1}ba_{0}^{2}-a^{-1}b+a^{-1}b)\\
		&=a^{2}z(\sigma\widetilde{L})+ab.\nonumber
	\end{align}

	Since \(gz\left(t\right)=yg\left(t\right)\) for every real number \(t\), equation 
	\(\left(\ref{linbox-eq1}\right)\) returns
	\begin{align}\label{linbox-eq2}
	g(z(\sigma\widetilde{L}))=a^{2}(z(\sigma\widetilde{L}))+ab,
	\end{align}
	for any real number \(\sigma>1\). By definition of $g$ we also have that
	\begin{align}\label{linbox-eq3}
	g(z(\sigma\widetilde{L}))=\dfrac{\bar{a}(z(\sigma\widetilde{L}))+\bar{b}}{\bar{c}(z(\sigma\widetilde{L}))+\bar{d}}.
	\end{align}
	Then equating (\ref{linbox-eq2}) and (\ref{linbox-eq3}) we see that
	\begin{equation}\label{linbox-eq4}
	a^{2}(z(\sigma\widetilde{L}))+ab=\dfrac{\bar{a}(z(\sigma\widetilde{L}))+\bar{b}}{\bar{c}(z(\sigma\widetilde{L}))+\bar{d}}.
	\end{equation}
	By rewriting equation (\ref{linbox-eq4}), we get
	\begin{align}\label{linbox-eq5}
	a^{2}\bar{c}(z(\sigma\widetilde{L}))^{2}+(ab\bar{c}+a^{2}\bar{d})z(\sigma\widetilde{L})+ab\bar{d}=\bar{a}z(\sigma\widetilde{L})+\bar{b}.
	\end{align}
	Equation (\ref{linbox-eq5}) is a polynomial equation that holds for all the \(\sigma>1\) such that
	\(\sigma\widetilde{L}<\widetilde{L}<L\) and so, since there is an interval worth of such $\sigma$'s,
	either \(a^{2}=0\) or \(\bar{c}=0\). If \(a^{2}=0\), then \(g\) 
	would not be a homeomorphism for \(t<L\), which is impossible. If \(\bar{c}=0\), 
	then equation \(\left(\ref{linbox-eq5}\right)\), coupled with the fact that $\bar{a}\bar{d}=1$,
	implies that 
	\[
	a^{2}z(\sigma\widetilde{L})+ab=\bar{a}^2z(\sigma\widetilde{L})+\bar{a}\bar{b},
	\]
	and so \(g\left(t\right)=a^{2}t+ab\) for \(t\in\left(-\infty,M\right]\) for some 
	\(M>\widetilde{L}\), a contradiction to the definition of the breakpoint \(\widetilde{L}\). 
	Hence, in all cases we have a contradiction to the assumption that
	\(\widetilde{L}<\min\left\{L,g^{-1}\left(L\right)\right\}\) and so we have that
	\(\widetilde{L}\geq\min\left\{L,g^{-1}\left(L\right)\right\}\). The proof for the 
	final box is similar.
\end{proof}
\begin{rem}
	We notice that Lemma \ref{monod-initialbox} also holds for 
	\(y,z\in H^{<}\left(-\infty,p\right)\), by just applying its statement to \(y^{-1}\) 
	and \(z^{-1}\).
\end{rem}

\subsection{Building a Candidate Conjugator}
In this subsection, we prove several lemmas which show how to buid a conjugator, 
if it exists. If this is the case, then we prove that the conjugator must be unique. 
Given two elements \(y,z\in H\), the set of their conjugators is a coset of the
centralizer of either \(y\) or \(z\). Thus, it is important to begin by obtaining
properties of centralizers, which we will do next. After that, we will identify 
\(y\) and \(z\) inside a box close to the initial box using a suitable conjugator, 
as mentioned before. Then we repeat this process and build more pieces of this 
potential conjugator until we reach the final affinity box. We omit the proof
of some of the Lemmas, since they follow word-by-word from the original ones in
\cite{KassMat2012} with a slight adaptation in which we use the initial germs.
The proofs of the following two results are the same as that of \cite[Lemma 4.4]{KassMat2012}
and \cite[Corollary 4.5]{KassMat2012}.

\begin{lem}\label{monod-centralizer-identity}
	Let \(z\in H\) and suppose that there exist real numbers \(\lambda\) and \(\mu\) 
	satisfying \(-\infty<\lambda\leq\mu<+\infty\), \(z\left(t\right)\leq \lambda\), 
	for all \(t\in\left(-\infty, \mu\right]\) and that there is \(g\in H\) so
	that \(g\left(t\right)=t\) for all \(t\in \left(-\infty, \lambda\right]\) and 
	\(g^{-1}zg\left(t\right)=z\left(t\right)\) for each 
	\(t\in \left(-\infty, \mu\right]\). Then \(g\) is the identity map up to \(\mu\).
\end{lem}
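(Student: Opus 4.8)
The plan is to convert the conjugacy relation into a pointwise functional equation and then use the hypothesis that $z$ compresses $(-\infty,\mu]$ into the region on which $g$ is already the identity. First I would rewrite $g^{-1}zg(t)=z(t)$, which holds for every $t\in(-\infty,\mu]$, in the equivalent form $z\bigl(g(t)\bigr)=g\bigl(z(t)\bigr)$ for all such $t$.

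Next I would simplify the right-hand side. For $t\in(-\infty,\mu]$ we have $z(t)\leq\lambda$ by hypothesis, and $z(t)$ is finite because $z\in H$ stabilizes $\infty$; hence $z(t)\in(-\infty,\lambda]$. Since $g$ restricts to the identity on $(-\infty,\lambda]$, this yields $g\bigl(z(t)\bigr)=z(t)$, so the functional equation becomes $z\bigl(g(t)\bigr)=z(t)$ for all $t\in(-\infty,\mu]$. Because $z$ is a homeomorphism of $\mathbb{R}\mathrm{P}^{1}$, and in particular injective, I can cancel $z$ to conclude $g(t)=t$ for every $t\in(-\infty,\mu]$, which is exactly the assertion that $g$ is the identity up to $\mu$.

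I do not expect a genuine obstacle in this argument; it is essentially the verbatim adaptation of \cite[Lemma 4.4]{KassMat2012} mentioned in the text. The only points meriting attention are using the conjugacy identity in the correct direction---so that it is the occurrence of $g$ applied to a point of $(-\infty,\lambda]$ that gets simplified, rather than the outer $g^{-1}$---and noting that $z$ keeps $(-\infty,\mu]$ inside the finite part of $(-\infty,\lambda]$, which follows at once from $z(\infty)=\infty$ together with $z(t)\le\lambda$ on $(-\infty,\mu]$. Everything else is formal.
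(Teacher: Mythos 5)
Your argument is correct and is exactly the argument of \cite[Lemma 4.4]{KassMat2012}, to which the paper defers instead of writing out a proof: rewrite the conjugacy as $z(g(t))=g(z(t))$, use $z(t)\le\lambda$ to collapse the right-hand side to $z(t)$, and cancel the injective map $z$. Nothing further is needed.
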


In case of \(z\in H^{<}\), the previous lemma yields the following consequence.

\begin{cor}\label{g=id}
	Let \(z\in H^{<}\) and \(g\in H\) such that \(g_{-\infty}=\left(1,0\right)\)	
	and \(g^{-1}zg=z\). Then \(g\) is the identity map.
\end{cor}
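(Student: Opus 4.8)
The plan is to reduce the statement to Lemma \ref{monod-centralizer-identity} by an inductive ``pushing to the right'' argument along the $z$-orbit. First I would unpack the hypothesis $g_{-\infty}=\left(1,0\right)$: by Definition \ref{def:germs} this says precisely that $g$ has slope $1$ near $-\infty$ and that $g\left(L\right)=L$, where $L$ is the largest real number up to which $g$ is affine with that slope; hence $g\left(t\right)=t$ for all $t\leq L$. So $g$ is already the identity on an initial ray, and the goal is to propagate this all the way to $+\infty$.

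Next I would set up the induction. Since $z\in H^{<}$, the map $z$ is an increasing homeomorphism with $z\left(t\right)<t$ for every $t\in\mathbb{R}$, so $z^{-1}\in H^{>}$ and the sequence $L_{n}\coloneqq z^{-n}\left(L\right)$ is strictly increasing. The inductive claim is that $g\left(t\right)=t$ for all $t\leq L_{n}$; the base case $n=0$ is the observation above. For the inductive step, assume $g\left(t\right)=t$ for $t\leq L_{n-1}$ and apply Lemma \ref{monod-centralizer-identity} with $\lambda\coloneqq L_{n-1}$ and $\mu\coloneqq L_{n}$. Then $-\infty<\lambda\leq\mu<+\infty$; since $z$ is increasing, $z\left(t\right)\leq z\left(\mu\right)=z\left(z^{-n}\left(L\right)\right)=z^{-(n-1)}\left(L\right)=\lambda$ for all $t\leq\mu$; the inductive hypothesis gives $g\left(t\right)=t$ for $t\leq\lambda$; and $g^{-1}zg\left(t\right)=z\left(t\right)$ holds for every $t$, in particular for $t\leq\mu$, because $g$ centralizes $z$. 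Lemma \ref{monod-centralizer-identity} therefore yields $g\left(t\right)=t$ for all $t\leq\mu=L_{n}$, completing the induction.

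Finally I would conclude by an escape-to-infinity argument. Because $z^{-1}\in H^{>}$ lies strictly above the diagonal everywhere on $\mathbb{R}$, the strictly increasing orbit $\left(L_{n}\right)_{n}=\left(z^{-n}\left(L\right)\right)_{n}$ cannot be bounded above: if it converged to a finite $M$, continuity would force $z^{-1}\left(M\right)=M$, contradicting $z^{-1}\left(M\right)>M$. Hence $\bigcup_{n}\left(-\infty,L_{n}\right]=\mathbb{R}$, and the induction gives $g\left(t\right)=t$ for every $t\in\mathbb{R}$, i.e.\ $g$ is the identity map.

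The argument presents no real obstacle once Lemma \ref{monod-centralizer-identity} is available; the only points demanding a little care are verifying that the hypotheses of that lemma are genuinely met at each stage — in particular the requirement $z\left(t\right)\leq\lambda$ on the \emph{whole} ray $\left(-\infty,\mu\right]$, which is exactly why monotonicity of $z$ together with the choice $\mu=z^{-1}\left(\lambda\right)$ is used — and observing that the $z^{-1}$-orbit of $L$ exhausts $\mathbb{R}$ precisely because $z^{-1}$ is a one-bump function above the diagonal on all of $\mathbb{R}$.
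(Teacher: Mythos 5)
Your proof is correct and follows exactly the route the paper intends: the paper omits the argument, citing \cite[Corollary 4.5]{KassMat2012}, whose proof is precisely this iteration of Lemma \ref{monod-centralizer-identity} along the increasing orbit $z^{-n}(L)$, starting from the identity behaviour forced by $g_{-\infty}=(1,0)$. Your verification of the hypotheses at each step and the escape-to-infinity observation for the orbit of a one-bump map are both accurate.
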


The preceding two results allow us to construct a group monomorphism 
between the group of centralizers of elements from \(H\) 
and the group \(\mathrm{Aff}\left(\mathbb{R}\right)\) as well as showing uniqueness
of conjugators
\begin{lem}\label{monod-centralizers}
	Given \(z\in H^{<}\), the following map
	\begin{align*}
	\varphi_{z}\colon C_{H}\left(z\right) &\rightarrow \mathrm{Aff}\left(\mathbb{R}\right)\\
	g &\mapsto g_{-\infty},	
	\end{align*}
	\noindent is a group monomorphism.
\end{lem}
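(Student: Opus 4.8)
The plan is to verify that $\varphi_z$ is a well-defined group homomorphism and then use Corollary \ref{g=id} to establish injectivity. First I would check that $\varphi_z$ is a homomorphism: for $g,h\in C_H(z)$, the germ at $-\infty$ of a composition $gh$ is the product in $\mathrm{Aff}(\mathbb{R})$ of the germs $g_{-\infty}$ and $h_{-\infty}$. This is because on a sufficiently large negative interval both $g$ and $h$ are affine maps (with the slopes and intercepts recorded by their germs), and the composition of affine maps near $-\infty$ is again affine, with the affine data obtained by composing; this is exactly the semidirect product operation $(a,b)(c,d)=(ac,b+ad)$ on $\mathrm{Aff}(\mathbb{R})$ described in the paragraph following Definition \ref{def:germs}. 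One small point to be careful about: the interval on which $gh$ is affine is the intersection (after pulling back by $g$, or rather $h$) of the intervals on which $g$ and $h$ are separately affine, which is still a neighborhood of $-\infty$; so the germ of the composition is genuinely the composition of germs. Hence $\varphi_z(gh)=\varphi_z(g)\varphi_z(h)$, and $\varphi_z$ is a homomorphism. (It is well-defined because $g_{-\infty}\in\mathrm{Aff}(\mathbb{R})$ as remarked after Definition \ref{def:germs}.)

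Next I would prove injectivity. Suppose $g\in C_H(z)$ with $\varphi_z(g)=g_{-\infty}=(1,0)$, the identity of $\mathrm{Aff}(\mathbb{R})$. Then $g$ satisfies precisely the hypotheses of Corollary \ref{g=id}: $z\in H^{<}$, $g\in H$, $g_{-\infty}=(1,0)$, and $g^{-1}zg=z$ since $g$ centralizes $z$. Corollary \ref{g=id} then forces $g=\mathrm{id}$. Therefore $\ker\varphi_z$ is trivial and $\varphi_z$ is a monomorphism.

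The main obstacle — or rather the place where all the real content sits — is not in this lemma itself but in the already-established Corollary \ref{g=id} (equivalently Lemma \ref{monod-centralizer-identity}), which is what rules out the existence of a nontrivial centralizing element with trivial initial germ; I am free to invoke it. Within the present proof, the only genuinely delicate point is the homomorphism verification: one must make sure that the "affine near $-\infty$" intervals for $g$ and $h$ are compatible so that the germ of $gh$ really is computed by the $\mathrm{Aff}(\mathbb{R})$ product, rather than acquiring a spurious breakpoint. Since $g$ maps a neighborhood of $-\infty$ affinely into a neighborhood of $-\infty$ (its germ has positive slope and $g$ fixes $\infty$), the composite $g\circ h$ is affine on a suitable interval $(-\infty,L]$, and the computation of $(gh)_{-\infty}$ reduces to composing two affine functions, which is the semidirect product multiplication. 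This completes the argument.
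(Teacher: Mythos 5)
Your proposal is correct and follows essentially the same route as the paper: verify the homomorphism property by composing the affine germs near $-\infty$, then reduce injectivity to Corollary \ref{g=id}. The only cosmetic difference is that you check triviality of the kernel directly, whereas the paper takes two centralizing elements with equal germs and applies Corollary \ref{g=id} to their quotient $g_1g_2^{-1}$; these are equivalent formulations of the same argument.
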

\begin{proof}
	First of all, for each \(g_{1},g_{2}\in C_{H}\left(z\right)\), with
	\((g_1)_{-\infty}=\left(a_{0}^{2},a_{0}b_{0}\right)\) and
	\((g_2)_{-\infty}=\left(\bar{a}_{0}^{2},\bar{a}_{0}\bar{b}_{0}\right)\),
	there exists \(L\in \mathbb{R}\) so that 
	\(g_{1}g_{2}\left(t\right)=a_{0}^{2}\bar{a}_{0}^{2}t+a_{0}^{2}\bar{a}_{0}\bar{b}_{0}+a_{0}b_{0}\)
	on $(-\infty,L]$.
	Then 
	\[
	(g_{1}g_{2})_{-\infty}=\left(a_{0}^{2}\bar{a}_{0}^{2},a_{0}b_{0}+a_{0}^{2}\bar{a}_{0}\bar{b}_{0}\right)=
	\left(a_{0}^{2},a_{0}b_{0}\right) \cdot \left(\bar{a}_{0}^{2},\bar{a}_{0}\bar{b}_{0}\right)=
	(g_1)_{-\infty} (g_2)_{-\infty}.
	\]
	so that \(\varphi_{z}\) is a well-defined group homomorphism.
	To show injectivity, suppose that
	\(\varphi\left(g_{1}\right)=\varphi\left(g_{2}\right)\) for $g_1,g_2 \in C_H(z)$. 
	Then
	\(\left(a_{0}^{2},a_{0}b_{0}\right)=\left(\bar{a}_{0}^{2},\bar{a}_{0}\bar{b}_{0}\right)\).
	Thus there exists a number \(L\in\mathbb{R}\) so that 
	\(g_{1}\left(t\right)=g_{2}\left(t\right)\) for all 
	\(t\in\left(-\infty,L\right]\). Let us define \(g\coloneqq g_{1}g_{2}^{-1}\). We 
	have \(g\left(t\right)=t\) for each \(t\in\left(-\infty,L\right]\). Moreover, we have
	 \(g^{-1}zg=z\). It follows from Corollary \ref{g=id} that 
	\(g\left(t\right)=t\) for all \(t\in\mathbb{R}\). Which implies that 
	\(g_{1}\left(t\right)=g_{2}\left(t\right)\) for each \(t\in\mathbb{R}\). 
	Therefore \(\varphi_{z}\) is a monomorphism.
\end{proof}

\begin{prop}[Uniqueness]\label{monod-uniqueness}
	Let \(y,z\in H^{<}\) and \(g\in H\) be maps so that \(y^{g}=z\). Then the 
	conjugator \(g\) is uniquely determined by its initial germ \(g_{-\infty}\).
\end{prop}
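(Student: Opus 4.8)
The plan is to derive uniqueness from Corollary \ref{g=id}, in the same spirit as the corresponding statement in \cite{KassMat2012}. First I would restate the claim in the form I intend to prove: if $g,g'\in H$ both satisfy $y^{g}=z$ and $y^{g'}=z$ and moreover $g_{-\infty}=g'_{-\infty}$, then $g=g'$. Rewriting the two conjugacy relations as $gzg^{-1}=y=g'z(g')^{-1}$ and setting $h\coloneqq (g')^{-1}g$, these combine to $hzh^{-1}=z$, so $h\in C_{H}(z)$; equivalently $h^{-1}zh=z$.

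The next step is to check that $h_{-\infty}=(1,0)$. For this I would use that $g\mapsto g_{-\infty}$ is a group homomorphism from $H$ to $\mathrm{Aff}(\mathbb{R})$: every element of $H$ fixes $\pm\infty$ and is an affine map on a suitable interval $(-\infty,L]$, so the local composition computation already carried out in the proof of Lemma \ref{monod-centralizers} applies verbatim to arbitrary $g_{1},g_{2}\in H$ (not only to elements of a centralizer) and gives $(g_{1}g_{2})_{-\infty}=(g_{1})_{-\infty}(g_{2})_{-\infty}$. Applying this to $h=(g')^{-1}g$ and using $g_{-\infty}=g'_{-\infty}$ yields $h_{-\infty}=\bigl((g')_{-\infty}\bigr)^{-1}g_{-\infty}=(1,0)$. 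Alternatively, one can argue directly: $g$ and $g'$ coincide with the same affine map on some $(-\infty,L]$, so $h$ restricts to the identity on a neighbourhood of $-\infty$, whence $h_{-\infty}=(1,0)$.

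Finally, since $z\in H^{<}$, $h_{-\infty}=(1,0)$, and $h^{-1}zh=z$, Corollary \ref{g=id} forces $h=\mathrm{id}$, i.e. $g=g'$, which is exactly the assertion that the conjugator is determined by its initial germ. I do not expect a real obstacle here; the only point that is not purely formal is the homomorphism property of the initial-germ map, and that in turn rests entirely on the facts, established in Section \ref{sec:monod}, that elements of $H$ have affine germs at $\pm\infty$ and fix $\pm\infty$, so that the composition of two such germs near $-\infty$ again lies in the affine region.
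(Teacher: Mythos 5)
Your proposal is correct and follows essentially the same route as the paper: form the quotient of the two conjugators, observe its initial germ is $(1,0)$, and invoke Corollary \ref{g=id}. The only cosmetic difference is that you take $h=(g')^{-1}g$ (a centralizer of $z$) where the paper takes $g_1g_2^{-1}$ (a centralizer of $y$); since both $y$ and $z$ lie in $H^{<}$, either works.
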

\begin{proof}
	Let us assume that there are \(g_{1},g_{2}\in H\) so that \(g_{1}^{-1}yg_{1}=z\) 
	and \(g_{2}^{-1}yg_{2}=z\) and with the same initial germ. Then
	\(\left(g_{1}g_{2}^{-1}\right)^{-1}y\left(g_{1}g_{2}^{-1}\right)=y\).
	Defining \(g\coloneqq g_{1}g_{2}^{-1}\), we get that \(g\left(t\right)=t\) for 
	all \(t\in\left(-\infty,L\right]\),	which implies that the initial germ of \(g\) 
	is \(g_{-\infty}=\left(1,0\right)\). By Corollary \ref{g=id}, the unique centralizer of \(y\) with initial germ $\left(1,0\right)$
	is the identity map. Then \(g\left(t\right)=t\) for all \(t\in\mathbb{R}\). 
	Therefore, \(g_{1}=g_{2}\), which proves the uniqueness of a conjugator with a 
	given initial germ, if it exists.
\end{proof}

The next lemma gives a tool to identify the graphs of \(y\) and \(z\) inside 
suitable boxes via some candidate conjugator.

\begin{lem}[Identification Lemma]\label{monod-idlemma}
	Let \(y,z\in H^{<}\) and \(L\in\mathbb{R}\) be such that 
	\(y\left(t\right)=z\left(t\right)\) for all \(t\in\left(-\infty,L\right]\). Then 
	there exists \(g\in H\) so that \(z\left(t\right)=g^{-1}yg\left(t\right)\) for 
	every \(t\in\left(-\infty,z^{-1}\left(L\right)\right]\) and 
	\(g\left(t\right)=t\) in \(\left(-\infty,L\right]\). Moreover, this
	element \(g\) is uniquely determined on \(\left(L,z^{-1}\left(L\right)\right]\).
\end{lem}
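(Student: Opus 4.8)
The plan is to produce the conjugator by a single closed formula, \(g \coloneqq y^{-1}z\), and then to read off the two asserted properties and the uniqueness clause from the hypothesis that \(y\) and \(z\) agree on \((-\infty,L]\) together with the fact that \(z\in H^{<}\). Before anything else I would record the bookkeeping used throughout: since \(z(t)<t\) for all \(t\) we have \(z^{-1}(L)>L\), so the interval \((L,z^{-1}(L)]\) is nonempty; and since \(z\) is an increasing homeomorphism of \(\mathbb{R}\mathrm{P}^{1}\) fixing \(\infty\) with \(z(z^{-1}(L))=L\), it maps \((-\infty,z^{-1}(L)]\) onto \((-\infty,L]\), so in particular \(z(t)\le L\) whenever \(t\le z^{-1}(L)\). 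Finally, \(g=y^{-1}z\) lies in \(H\) because \(H\) is a group.

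Next I would verify the two stated properties of \(g\). For \(t\le L\) the hypothesis gives \(z(t)=y(t)\), hence \(g(t)=y^{-1}(z(t))=y^{-1}(y(t))=t\), so \(g\) is the identity on \((-\infty,L]\). For the conjugacy relation, a direct computation gives \(g^{-1}yg=(y^{-1}z)^{-1}y(y^{-1}z)=z^{-1}yz\), so the equality \(z(t)=g^{-1}yg(t)\) on \((-\infty,z^{-1}(L)]\) is equivalent to \(y(z(t))=z(z(t))\) there, and this holds because for such \(t\) we have \(z(t)\le L\), where \(y\) and \(z\) coincide. Continuity and monotonicity of the resulting map, and the matching at the endpoints \(L\) and \(z^{-1}(L)\), are automatic since \(g\) is a genuine element of \(H\).

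For the uniqueness clause, suppose \(g'\in H\) also satisfies \(z=(g')^{-1}yg'\) on \((-\infty,z^{-1}(L)]\) and \(g'=\mathrm{id}\) on \((-\infty,L]\). For \(t\in(L,z^{-1}(L)]\) we have \(z(t)\le L\), so \(g'(z(t))=z(t)\) by the identity property; combining this with \(g'z=yg'\) forces \(y(g'(t))=z(t)\), i.e.\ \(g'(t)=y^{-1}(z(t))=g(t)\). Hence the restriction of \(g\) to \((L,z^{-1}(L)]\) is determined. I do not expect a genuine obstacle here: the whole content sits in the interval bookkeeping of the first paragraph, which is precisely where the one-bump hypothesis \(z\in H^{<}\) enters. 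I would also note that this is a shortcut over the corresponding argument in \cite{KassMat2012}, where \(g\) is built piecewise — the identity on \((-\infty,L]\), the map \(y^{-1}\circ z\) on \((L,z^{-1}(L)]\), and an arbitrary \(H\)-extension further out — since taking the group element \(y^{-1}z\) makes membership in \(H\), the finiteness of breakpoints (all lying in \(\mathcal{P}_{\mathbb{R}}\)), and the affine germ at \(+\infty\) automatic.
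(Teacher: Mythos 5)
Your proof is correct, and the uniqueness half is verbatim the paper's argument: for $t\in(L,z^{-1}(L)]$ one has $z(t)\le L$, so the identity property forces $y(g(t))=g(z(t))=z(t)$ and hence $g=y^{-1}z$ there. Where you diverge is the existence half. The paper defines $g$ piecewise — the identity on $(-\infty,L]$, the map $y^{-1}z$ on $[L,z^{-1}(L)]$, and then an essentially arbitrary affine extension on $[z^{-1}(L),+\infty)$ glued at the point $(z^{-1}(L),y^{-1}(L))$ — which requires checking continuity at the seams and membership in $H$. Your choice of the single group element $g=y^{-1}z$ makes all of that automatic ($H$ is a group) and the two required identities reduce to one-line computations using $y|_{(-\infty,L]}=z|_{(-\infty,L]}$ and $z((-\infty,z^{-1}(L)])=(-\infty,L]$; this is a genuine simplification. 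The only thing the paper's construction buys that yours gives up is the freedom to choose the extension of $g$ beyond $z^{-1}(L)$ (in particular to keep it affine there), which is the form in which the lemma gets iterated inside the Stair Algorithm; but since the lemma as stated only asserts existence of some $g$ and uniqueness on $(L,z^{-1}(L)]$, your argument proves exactly what is claimed.
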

\begin{proof}
We start showing that, if such a \(g\in H\) exists, then it is uniquely determined on \(\left(L,z^{-1}\left(L\right)\right]\). In fact, if such a \(g\in H\) exists then, for each
	\(t\in\left(L,z^{-1}\left(L\right)\right]\), we have
	\(y\left(g\left(t\right)\right)=g\left(z\left(t\right)\right)=z\left(t\right)\),
	since \(z\left(t\right)\leq L\). Therefore \(g\left(t\right)=y^{-1}z\left(t\right)\),
	for every \(t\in\left(L,z^{-1}\left(L\right)\right]\). 
	
	To show existence, we just define
	\[g\left(t\right)\coloneqq\begin{dcases}
	t,&\;\text{if}\;t\in\left(-\infty,L\right]\\
	y^{-1}z\left(t\right),&\;\text{if}\;t\in\left[L,z^{-1}\left(L\right)\right]
	\end{dcases}\]
	and we extend it to the real line from the point $\left(z^{-1}\left(L\right),y^{-1}\left(L\right)\right)$, by gluing some order-preserving affine map defined on $[z^{-1}\left(L\right),+\infty)$. We also define $g(\pm \infty)=\pm \infty$.
\end{proof}

	We repeatedly apply Lemma \ref{monod-idlemma} so that, if we iterate it $N$ times, we can
	build \(g\) on \(\left(-\infty, z^{-N}\left(L\right)\right]\) and this will be the key
	step for the Stair Algorithm in the next subsection. We conclude this subsection with a result whose proof can be obtained word for word from \cite[Lemma 4.13]{KassMat2012}.

\begin{lem}
\label{power-conj-lemma}
	Let \(y,z\in H^{<}\). Let us consider \(g\in H\) and \(n\in\mathbb{Z}_{>0}\).
	Then \(y^{g}=z\) if, and only if, \(\left(y^{n}\right)^{g}=z^{n}\).
\end{lem}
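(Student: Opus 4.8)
The forward implication is immediate and purely formal: if $g^{-1}yg=z$, then $g^{-1}y^{n}g=\left(g^{-1}yg\right)^{n}=z^{n}$, so $\left(y^{n}\right)^{g}=z^{n}$. The content is the reverse implication, so I would assume $\left(y^{n}\right)^{g}=z^{n}$ and deduce $y^{g}=z$.

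The plan is to set $w\coloneqq g^{-1}yg$ and reduce to a uniqueness-of-roots statement. First I would check $w\in H^{<}$: since $y$ has no real fixed point and $g$ is an increasing homeomorphism of $\mathbb{R}\mathrm{P}^{1}$ fixing $\pm\infty$, for every real $t$ we have $w(t)=g^{-1}\left(y\left(g(t)\right)\right)<g^{-1}\left(g(t)\right)=t$. By hypothesis $w^{n}=g^{-1}y^{n}g=z^{n}$, and since $z\in H^{<}$ one gets $z^{n}\in H^{<}$ by iterating $z(t)<t$. Thus it is enough to prove $w=z$, as then $g^{-1}yg=w=z$.

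To prove $w=z$, I would note that both $w$ and $z$ lie in $C_{H}\left(z^{n}\right)$: indeed $z$ commutes with $z^{n}$, and $w\cdot z^{n}=w\cdot w^{n}=w^{n}\cdot w=z^{n}\cdot w$. Since $z^{n}\in H^{<}$, Lemma \ref{monod-centralizers} applies to $z^{n}$, so the germ map $\varphi_{z^{n}}\colon C_{H}\left(z^{n}\right)\to\mathrm{Aff}\left(\mathbb{R}\right)$, $h\mapsto h_{-\infty}$, is an injective homomorphism. Because $w,w^{n},z,z^{n}$ all lie in $C_{H}\left(z^{n}\right)$, applying $\varphi_{z^{n}}$ gives $\left(w_{-\infty}\right)^{n}=\left(w^{n}\right)_{-\infty}=\left(z^{n}\right)_{-\infty}=\left(z_{-\infty}\right)^{n}$ in $\mathrm{Aff}\left(\mathbb{R}\right)$, so it remains to see that $\mathrm{Aff}\left(\mathbb{R}\right)$ has unique $n$-th roots.

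For the last point, a direct computation in $\mathrm{Aff}\left(\mathbb{R}\right)=\mathbb{R}_{>0}\ltimes\mathbb{R}$ shows $\left(s,c\right)^{n}=\left(s^{n},\,c\sum_{j=0}^{n-1}s^{j}\right)$. If $\left(s_{1},c_{1}\right)^{n}=\left(s_{2},c_{2}\right)^{n}$, then $s_{1}^{n}=s_{2}^{n}$ with $s_{1},s_{2}>0$ forces $s_{1}=s_{2}$, and then $c_{1}\sum_{j=0}^{n-1}s_{1}^{j}=c_{2}\sum_{j=0}^{n-1}s_{1}^{j}$ with $\sum_{j=0}^{n-1}s_{1}^{j}>0$ forces $c_{1}=c_{2}$. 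Hence $w_{-\infty}=z_{-\infty}$, so by injectivity of $\varphi_{z^{n}}$ we get $w=z$ and therefore $y^{g}=z$. The only steps needing care are the verifications $w\in H^{<}$ and $z^{n}\in H^{<}$, which make Lemma \ref{monod-centralizers} available, together with the elementary root-uniqueness computation in $\mathrm{Aff}\left(\mathbb{R}\right)$; everything else is formal, and indeed this follows the same pattern as \cite[Lemma 4.13]{KassMat2012}.
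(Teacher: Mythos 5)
Your proof is correct and follows essentially the same route as the paper, which simply invokes the argument of \cite[Lemma 4.13]{KassMat2012}: set $w=g^{-1}yg$, observe that $w$ and $z$ both centralize $z^{n}\in H^{<}$, and use injectivity of the germ homomorphism $\varphi_{z^{n}}$ (Lemma \ref{monod-centralizers}) to reduce to equality of initial germs. Your explicit verification that $\mathrm{Aff}(\mathbb{R})$ has unique $n$-th roots is exactly the small extra ingredient needed to adapt the original slope-only argument to the germ pair $(a^{2},ab)$, and it is carried out correctly.
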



\subsection{The Stair Algorithm for \(H\)}

 We now adapt to $H$ the Stair Algorithm from \cite{KassMat2012} which constructs the unique candidate conjugator between two elements $y,z \in H$ with a given initial germ $(a^2,ab)\in \mathrm{Aff}\left(\mathbb{R}\right)$, that is, an element $g \in H$ such that, if there exists an $h \in H$ so that $h_{-\infty}=(a^2,ab)$ and $y^g=z$, then $h=g$.
 
	\begin{thm}[Stair Algorithm]\label{monod-stair}
		Let \(y,z\in H^{<}\) and let \(\left(-\infty,L\right]^{2}\) be the initial 
		box given by $y$ and $z$. Let us consider  
		\(\left(a^{2},ab\right)\in\mathrm{Aff}\left(\mathbb{R}\right)\) so that
		$a^2L+ab \le L$.
		Then there exists \(N\in\mathbb{Z}_{>0}\) such that the unique candidate 
		conjugator \(g \in H\) between \(y\) and \(z\) with initial germ 
		\(g_{-\infty}=\left(a^{2},ab\right)\) is given by
		\[g\left(t\right)=y^{-N}g_{0}z^{N}\left(t\right),\;\text{for  }\;t\in\left(-\infty,z^{-N}\left(L\right)\right],\]
		\noindent and affine otherwise, where \(g_{0}\in H\) is an arbitrary homeomorphism
		which is affine on \(\left(-\infty,L\right]^{2}\) and so that 
		\((g_0)_{-\infty}=\left(a^{2},ab\right)\).
	\end{thm}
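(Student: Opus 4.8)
The plan is to run the Identification Lemma (Lemma~\ref{monod-idlemma}) as an iterative procedure that repeatedly enlarges the domain on which the candidate conjugator is forced, and to show the process terminates in finitely many steps. First I would observe that the hypothesis $a^2 L + ab \le L$ together with $y,z \in H^{<}$ guarantees that $g_0$, restricted to $(-\infty, L]$, maps into $(-\infty, L]$; this is exactly what is needed so that the ``identification'' of $y$ and $z$ begun on the initial box can be propagated. The idea is to set $g^{(0)} \coloneqq g_0$ on the initial box, and then, having built a map which conjugates $y$ to $z$ on $(-\infty, z^{-k}(L)]$ while being affine with the prescribed germ near $-\infty$, apply Lemma~\ref{monod-idlemma} (with the roles of the endpoint $L$ played by $z^{-k}(L)$) to extend it uniquely onto $(-\infty, z^{-(k+1)}(L)]$. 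By uniqueness in Lemma~\ref{monod-idlemma}, each extension step is forced, so the resulting map is the only possible conjugator with the given initial germ; since composing the conjugation relation $y^g = z$ we also get $g = y^{-k} g_0 z^{k}$ on $(-\infty, z^{-k}(L)]$, because on that set $z^{k}$ lands inside the initial box where $g$ agrees with $g_0$.

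The second ingredient is termination. Because $z \in H^{<}$, the sequence $z^{-k}(L)$ is strictly increasing; the key point is that it escapes to $+\infty$ (or at least past the largest breakpoint of both $y$ and $z$) after finitely many steps. Here I would use that $z$ has finitely many breakpoints and affine germs at $\pm\infty$: since $z^{-1}$ lies in $H^{>}$ and is eventually a genuine translation or dilation near $+\infty$ (by the discussion of affine germs and Remark~\ref{Units-Translations}), its orbits are cofinal in $\mathbb{R}$. So there is an $N$ with $z^{-N}(L) \ge R$, where $R$ is chosen larger than all breakpoints of $y$ and $z$ and large enough that $y$ and $z$ are affine on $[R,+\infty)$. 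On $(-\infty, z^{-N}(L)]$ the map $g = y^{-N} g_0 z^{N}$ is already determined, and beyond $z^{-N}(L)$ we simply glue an affine map: once both $y$ and $z$ are affine and we have matched slopes (Lemma~\ref{monod-initialslope} applied at $+\infty$, which is part of the ``candidate'' setup — if slopes do not match there is no conjugator and the algorithm correctly produces nothing usable), the conjugacy equation $g z = y g$ on the final affine region is a single affine equation that pins $g$ down up to the translation parameter, and that parameter is fixed by continuity at $t = z^{-N}(L)$.

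The main obstacle I expect is the termination/cofinality argument: one must rule out the possibility that the orbit $\{z^{-k}(L)\}$ converges to a finite fixed point of $z$ before reaching the affine region at $+\infty$. Since $z \in H^{<}$ means $z(t) < t$ everywhere, $z$ has \emph{no} fixed points in $\mathbb{R}$, so $z^{-1}(t) > t$ for all $t$ and the orbit cannot accumulate at a finite point (if it did, the limit would be a fixed point); combined with the affine-germ behaviour at $+\infty$ this forces $z^{-k}(L) \to +\infty$. This is precisely where working with one-bump functions $y, z \in H^{<}$ (rather than arbitrary elements) is essential, and it is the reason the earlier reductions — conjugating a general element to handle its support interval by interval, and Lemma~\ref{power-conj-lemma} to pass between $z$ and its powers — are set up the way they are. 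A secondary but routine point is verifying that each finite-stage map actually lies in $H$: it has finitely many breakpoints (finitely many from each of the $N$ applications of Lemma~\ref{monod-idlemma}), it stabilizes $\infty$, its breakpoints lie in $\mathcal{P}_{\mathbb{R}}$, and it has affine germs at $\pm\infty$ by construction, so membership in $H$ follows from the description of $H$ in Section~\ref{sec:monod}.
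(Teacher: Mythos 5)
Your proposal is correct and follows essentially the same route as the paper: force the conjugator outward from the initial box via the Identification Lemma, read off the formula $g=y^{-N}g_0z^{N}$ from the conjugacy relation, use the fixed-point-freeness of $z\in H^{<}$ to reach the final box in finitely many steps, and conclude uniqueness from the prescribed initial germ. The only cosmetic difference is that the paper applies the Identification Lemma once to the powers $y^{N},z^{N}$ (invoking Lemma~\ref{power-conj-lemma}) rather than iterating it $N$ times, and it chooses $N$ so that the \emph{image} point $y^{-N}(a^{2}L+ab)$ also lies past $R$ — a small detail your sketch should include when gluing the affine tail.
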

	
	
	\begin{rem}
	We observe that the hypothesis on $(a^{2},ab)$ is a mild one. It ensures that $g_0(L)\le L$ and so, 
	    up to replacing \(g_{0}\) by \(g_{0}^{-1}\) and 
		switching the role of \(y\) and \(z\), we can always assume that \(a^2L+ab \le L\).
	\end{rem}
	
	Before giving the proof of Theorem \ref{monod-stair}, we observe the following corollary and make a comment 
	about completely characterizing conjugacy in Monod's group $H$.
	
	\begin{cor}\label{thm:characterize-conjugacy}
	Let \(y,z\in H^{<}\) and let \(\left(-\infty,L\right]^{2}\) 
	and \(\left[R,+\infty\right)^{2}\) be, respectively, 
	the initial and the final box given by $y$ and $z$. There is a $g \in H$ such that $y^g=z$
		if and only if there is some $(a^2,ab) \in\mathrm{Aff}\left(\mathbb{R}\right)$
		so that
		$a^2L+ab \le L$ and
		\[
		\lim_{N \to \infty}y^{-N}g_{0}z^{N}\left(t\right)
		\]
		is affine inside \(\left[R,+\infty\right)^{2}\) and
		where \(g_{0}\in H\) is an arbitrary homeomorphism
		which is affine on \(\left(-\infty,L\right]^{2}\) and so that 
		\((g_0)_{-\infty}=\left(a^{2},ab\right)\).
	\end{cor}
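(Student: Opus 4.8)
The plan is to deduce Corollary \ref{thm:characterize-conjugacy} directly from the Stair Algorithm (Theorem \ref{monod-stair}) together with the uniqueness statement (Proposition \ref{monod-uniqueness}), reducing everything to matching germs on the final box. First I would dispense with the forward implication. Suppose $g \in H$ satisfies $y^g = z$. By the remark following Theorem \ref{monod-stair}, after possibly replacing $g$ by $g^{-1}$ and swapping $y$ and $z$ (which does not affect conjugacy), we may assume the initial germ $g_{-\infty} = (a^2,ab)$ satisfies $a^2 L + ab \le L$. Then Theorem \ref{monod-stair} applies with this very germ: it produces $N \in \mathbb{Z}_{>0}$ and an $H$-element equal to $y^{-N} g_0 z^N$ on $(-\infty, z^{-N}(L)]$ and affine afterwards, and this element is the unique candidate conjugator with that initial germ. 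Since $g$ itself is a conjugator with that initial germ, uniqueness forces $g$ to equal this stair-constructed map; in particular $g$ is affine on the final box $[R,+\infty)^2$. Now for $t$ in a neighbourhood of $+\infty$ we have $z^N(t) \to +\infty$ as a point and, once $z^N(t)$ enters the region where $y$ and $z$ have stabilized (using $y,z \in H^<$ so their forward orbits escape to $+\infty$, combined with Lemma \ref{monod-idlemma} iterated), the truncated expression $y^{-N} g_0 z^N(t)$ stabilizes to $g(t)$ as $N \to \infty$. Hence the limit $\lim_{N\to\infty} y^{-N} g_0 z^N(t)$ exists, equals $g(t)$, and is affine on $[R,+\infty)^2$, giving the stated condition.

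For the reverse implication, suppose such an $(a^2,ab)$ exists with $a^2 L + ab \le L$ and such that the pointwise limit $g_\infty(t) := \lim_{N\to\infty} y^{-N} g_0 z^N(t)$ is affine on $[R,+\infty)^2$. The key point is that the partial maps $g_N(t) := y^{-N} g_0 z^N(t)$ restricted to $(-\infty, z^{-N}(L)]$ are exactly the nested pieces built by iterating Lemma \ref{monod-idlemma}: each $g_N$ agrees with $g_{N+1}$ on $(-\infty, z^{-N}(L)]$, so they patch together to a well-defined order-preserving homeomorphism $g_\infty$ of $\mathbb{R}$ with $(g_\infty)_{-\infty} = (a^2,ab)$ and which, by hypothesis, is affine near $+\infty$. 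Because $g_\infty$ has only finitely many breakpoints (it is built from finitely many Möbius pieces between the stabilized initial and final affine parts, using compactness of $\mathbb{R}\mathrm{P}^1$ and that $y,z$ have finitely many breakpoints) and has affine germs at $\pm\infty$, it lies in $H$ — here one invokes that $H$ is a full group. Finally, the defining relation of each $g_N$ (namely $g_N z(t) = y g_N(t)$ wherever both sides are defined, which is how Lemma \ref{monod-idlemma} was set up) passes to the limit, so $g_\infty z = y g_\infty$ on all of $\mathbb{R}$, i.e. $y^{g_\infty} = z$.

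The step I expect to be the main obstacle is making rigorous the claim that $g_\infty$ is genuinely an element of $H$ rather than merely a homeomorphism of $\mathbb{R}$ with affine germs — specifically, bounding the number of breakpoints uniformly and checking that they all lie in $\mathcal{P}_\mathbb{R}$. The nested construction a priori could accumulate breakpoints, so one must argue that for $N$ large enough the map $g_N$ has already reached the final box and hence $g_\infty = g_N$ on all of $\mathbb{R}$; this is precisely where the affineness-on-$[R,+\infty)^2$ hypothesis does its work, converting "the limit exists and behaves well at $+\infty$" into "the staircase terminates after finitely many steps." Once termination is established, membership in $H$ and the conjugacy relation follow as above, and the corollary is complete. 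A secondary, more routine point is verifying that the freedom in choosing $g_0$ (any $H$-map affine on the initial box with the prescribed initial germ) does not affect the outcome, which follows from the uniqueness already packaged in Theorem \ref{monod-stair}.
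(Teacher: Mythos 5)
Your proposal follows essentially the same route as the paper: the corollary is, as you say, a direct repackaging of Theorem \ref{monod-stair} plus Proposition \ref{monod-uniqueness}, with the forward direction coming from uniqueness of the candidate conjugator (hence the conjugator must be the stair-constructed map, which is affine in the final box and equals the stabilized pointwise limit), and the reverse direction coming from the observation that affineness on $\left[R,+\infty\right)^{2}$ is exactly what terminates the staircase after finitely many steps, so the limit has finitely many breakpoints, lies in $H$, and inherits the relation $g_\infty z = y g_\infty$ in the limit. The one imprecise step is your normalization in the forward direction: replacing $g$ by $g^{-1}$ and swapping $y$ and $z$ produces a witness germ for the pair $(z,y)$, not for the expression $y^{-N}g_{0}z^{N}$ attached to the ordered pair $(y,z)$ that the corollary actually asserts. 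The correct fix is to replace $g$ by $gz^{k}$ for $k$ large: this is still a conjugator from $y$ to $z$, and since $z\in H^{<}$ its initial germ composed $k$ times pushes $L$ strictly down, so for $k$ large the initial germ $(a^{2},ab)$ of $gz^{k}$ satisfies $a^{2}L+ab\le L$ as required.
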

	
	\begin{rem}
	Corollary \ref{thm:characterize-conjugacy} gives a characterization of conjugacy
	inside Monod's group $H$. However, as is, such characterization cannot be
	used to construct a finite set of candidate conjugators and thus
	use them to solve the conjugacy problem in Monod's group $H(A)$ for a suitable
	subring $A \subseteq \mathbb{R}$ in a manner analogous what was done for the Thompson-Stein groups $\mathrm{PL}_{A,G}([0,1])$ in \cite{KassMat2012} for a suitable
	subring $A \subseteq \mathbb{R}$ and subgroup $G \subseteq U(A)_{>0}$ of the group
	of the positive units of $A$.
	We now explain better why not.
	Lemma 5.4 in \cite{KassMat2012} shows that there are only finitely conjugators between $y$ and $z$ whose initial and final slope lie within a bounded interval. 
	After having used a suitable isomorphism (Lemma \ref{thm:unbounded-to-bounded} below) and thus considering a version of $H$ over the the unit interval $[0,1]$, we prove in Lemma
	\ref{thm:generalized-KasMat-5.3} below a similar result for centralizers with first and second derivative
	lying within bounded intervals. Even if Lemma \ref{thm:generalized-KasMat-5.3} can indeed be generalized to
	study conjugators with bounded first and second derivative (to get a result analogous to Lemma 5.4 in \cite{KassMat2012}), we cannot use
	the same trick of Lemmas 7.1 and 7.2 in \cite{KassMat2012} to bound both
	derivatives. By replacing a conjugator $g$ with $y^ng$ we can only bound the first derivative (so that it lives in a bounded interval), but have no available bound on the second derivative appearing in Lemma \ref{thm:generalized-KasMat-5.3} below: in other words, we can bound the $a$ appearing in $g_{-\infty}=(a^2,ab)$,
	but not the $b$ and so we would have to test continuum many $b$'s (or equivalently,
	continuum many initial germs) to find candidate conjugators.
	\end{rem}
	
	\begin{proof}[Proof of Theorem \ref{monod-stair}]
		First of all, we notice that we will consider \(y,z\in H^{<}\) such 
		that their initial germs are in the same conjugacy class in
		\(\mathrm{Aff}\left(\mathbb{R}\right)\), otherwise \(y\) and \(z\) 
		cannot be conjugate to each other by Lemma \ref{monod-germsconj}. 	
		Moreover, we further assume that
		\(\left(a^{2},ab\right)\in\mathrm{Aff}\left(\mathbb{R}\right)\)
		conjugates \(y_{-\infty}\) to \(z_{-\infty}\) in 
		\(\mathrm{Aff}\left(\mathbb{R}\right)\), otherwise there cannot be a conjugator 
		\(g\) for \(y\) and \(z\) with initial germ
		\(g_{-\infty}=\left(a^{2},ab\right)\), again by Lemma \ref{monod-germsconj}.
		Now, let \(\left[R,+\infty\right)^{2}\) be the final box and let
		\(N\in\mathbb{Z}_{>0}\) be sufficiently large so that 
		\(\min\left\{z^{-N}\left(L\right),y^{-N}\left(a^{2}L+ab\right)\right\}>R\).
		
		We will now build a candidate conjugator $g$ between $y^{N}$ and $z^{N}$ as the product of two functions \(g_{0}\) and \(g_{1}\) and then use
		Lemma \ref{power-conj-lemma}. Since $y,z \in H^{<}$, a direct calculation shows that \(y^{N}\) and \(z^{N}\) are affine in the initial and final boxes of $y$ and $z$, so we can take them as the initial and final boxes of \(y^{N}\) and \(z^{N}\). 
		We define 
		\(g_{0}\) as \(g_{0}\left(t\right)\coloneqq a^{2}t+ab\),
		on \(\left(-\infty,L\right]^2\) and extend it to the real line so
		that \(g_{0}\in H\). Our assumption on $(a^2,ab)$ ensures that $g_0(L)\le L$.
		Now we 
		define \(y_{1}\coloneqq g_{0}^{-1}yg_{0}\) and we look for a conjugator 
		\(g_{1}\) between \(y_{1}^{N}\) and \(z^{N}\). We remark that 
		\(y_{1}^{N}\) and \(z^{N}\) coincide on	\(\left(-\infty,L\right]\), since
		\(y_{1}^{N}=g_{0}^{-1}y^{N}g_{0}=z^{N}\) and $y^N,z^N \in H^{<}$ are affine on \(\left(-\infty,L\right]\).

		Making use of the Identification Lemma \ref{monod-idlemma}, we define
		a $g_{1}\in H$ so that
		\[g_{1}\left(t\right)\coloneqq\begin{dcases}
		t,&\;\text{if}\;t\in\left(-\infty,L\right]\\
		y_{1}^{-N}z^{N}\left(t\right),&\;\text{if}\;t\in\left[L,z^{-N}\left(L\right)\right].
		\end{dcases}\]
		\noindent By construction we have \(g_{1}^{-1}y_{1}^{N}g_{1}=z^{N}\)	on \(\left(-\infty,z^{-N}\left(L\right)\right]\). We now construct a function  
		\(g\) on \(\left(-\infty,z^{-N}\left(L\right)\right]\) by defining
		\(g\left(t\right)\coloneqq g_{0}g_{1}\left(t\right)\),
		for \(t\in \left(-\infty,z^{-N}\left(L\right)\right]\). We observe that 
		the last part of \(g\) is defined inside in the final box since
		\(t=z^{-N}\left(L\right)>R\) and
		\[g\left(z^{-N}\left(L\right)\right)=g_{0}g_{1}\left(z^{-N}\left(L\right)\right)>R.\]
		\noindent Moreover, by construction, \(g\) is a conjugator for \(y^{N}\) 
		and \(z^{N}\) on \(\left(-\infty,z^{-N}\left(L\right)\right]\), that is, 
		\(g=y^{-N}gz^{N}\) on \(\left(-\infty,z^{-N}\left(L\right)\right]\). Therefore,
		\[g\left(t\right)=y^{-N}gz^{N}\left(t\right)=y^{-N}g_{0}g_{1}z^{N}\left(t\right)=y^{-N}g_{0}z^{N}\left(t\right),\]
		\noindent since \(g_{1}z^{N}\left(t\right)=z^{N}\left(t\right)\) for every
		\(t\in \left(-\infty,z^{-N}\left(L\right)\right]\).
		
		If \(g\) is not an affine M\"obius function on \(\left[R,z^{-N}(L)\right]\), then \(g\) cannot be extended
		to a conjugator of \(y^N\) and \(z^N\) and the uniqueness of the shape of 
		\(g\) (Proposition \ref{monod-uniqueness}) says that continuing the Stair 
		Algorithm will build a function that cannot be a conjugator and therefore, 
		a conjugator with initial germ \(\left(a^{2},ab\right)\) cannot exist or 
		it would coincide with \(g\) on \(\left(-\infty, z^{-N}\left(L\right)\right]\).
		In the case that \(g\) is a affine M\"obius function on \(\left[R,z^{-N}\left(L\right)\right]\), we extend \(g\) to the whole real 
		line by extending its affine piece on
		\(\left[R,z^{-N}\left(L\right)\right]\). The map that we construct (which we still call \(g\)) lies 
		in \(H\).
		
		By Lemma \ref{monod-initialbox} and Proposition \ref{monod-uniqueness}, if 
		there exists a conjugator between \(y^{N}\) and \(z^{N}\), with initial 
		germ \(\left(a^{2},ab\right)\), it must be equal to \(g\). Then we just 
		check if \(g\) conjugates \(y^{N}\) to \(z^{N}\). If \(g\) conjugates 
		\(y^{N}\) to \(z^{N}\) then, by Lemma \ref{power-conj-lemma}, \(g\) is a 
		conjugator between \(y\) and \(z\), as desired.
	\end{proof}
	\begin{rem}\label{thm:switch-<->}
		Let us suppose that \(y,z\in H^{<}\cup H^{>}\).	In order to be conjugate, the 
		Lemma \ref{monod-germsconj} says that their initial germ must be in the same 
		conjugacy class in \(\mathrm{Aff}\left(\mathbb{R}\right)\). Similarly,
		their final germs must be in the same conjugacy class in 
		\(\mathrm{Aff}\left(\mathbb{R}\right)\). In other words, either both \(y\) 
		and \(z\) are in \(H^{<}\) or both are in \(H^{>}\). Furthermore, since 
		\(g^{-1}yg=z\) if and only if \(g^{-1}y^{-1}g=z^{-1}\), we can reduce
		the study to the case where they are both in \(H^{<}\).
	\end{rem}
	\begin{rem}
		The stair algorithm for \(H^{<}\) can be reversed. This means that we can 
		apply it in order to build a candidate for a conjugator between 
		\(y,z\in H^{>}\). Thus, given an element
		\(\left(a^{2},ab\right)\in\mathrm{Aff}\left(\mathbb{R}\right)\), we can 
		determine whether or not there is a conjugator \(g\) with final 
		germ \(g_{+\infty}=\left(a^{2},ab\right)\). The proof is similar.
		We just begin to construct \(g\) from the final box.
	\end{rem}
	We observe that the proof of Stair Algorithm does not depend on the choice of 
	\(g_{0}\), the only requirement on it is that it be affine on the initial box and
	\(g_{0_{-\infty}}=\left(a^{2},ab\right)\). Moreover, it gives a way to find 
	candidate conjugators, if they exist, and we have chosen an initial germ. 
	
	The following are two examples of construction of candidate conjugators
	via the Stair Algorithm. In the first example the candidate is indeed a conjugator,
	while in the second it is not.

    \begin{ex}\label{ex:Stair-Working}
    	Consider the maps $y\left(t\right)=t-1$ and
    	\[z\left(t\right)=\begin{dcases}
    	\dfrac{2t-2}{\frac{-3}{2}t+2},&\;\text{if}\;t\in\left[0,1\right];\\
    	\dfrac{-2t+2}{\frac{-3}{2}t+1},&\;\text{if}\;t\in\left[1,2\right];\\
    	t-1,&\;\text{otherwise}.
    	\end{dcases}\]
    Notice that $y,z\in H^{<}$ and that their initial and final germs are equal. Moreover, we have \(L=0\)
    and \(R=2\).
    Now we take
    $\left(1,-1\right)\in\mathrm{Aff}\left(\mathbb{R}\right)$
    and construct a candidate conjugator between \(y^{4}\) and
    \(z^{4}\). We follow the procedure of the proof of the Stair Algorithm  
    and define the maps $g_{0}\left(t\right)\coloneqq t-1$ and
    \[g_{1}\left(t\right)\coloneqq \begin{dcases}
    \dfrac{\frac{1}{2}t}{-\frac{3}{2}t+2},&\;\text{if}\;t\in\left[0,1\right];\\
    t,&\;\text{otherwise}.
    \end{dcases}\]
    We then define $g:=g_{0}g_{1}$ and see that
    \[g\left(t\right)\coloneqq\begin{dcases}
    \dfrac{2t-2}{\frac{-3}{2}t+2},&\;\text{if}\;t\in\left[0,1\right];\\
    t-1,&\;\text{otherwise}.
    \end{dcases}\quad\textrm{and}\quad g^{-1}\left(t\right)=\begin{dcases}
    \dfrac{2t+2}{\frac{3}{2}t+2},&\;\text{if}\;t\in\left[-1,0\right];\\
    t+1,&\;\text{otherwise}.
    \end{dcases}\]
    We notice that \(g\in H\). A direct calculation shows that $g$ conjugates \(y^{4}\) to \(z^{4}\). By Lemma \ref{power-conj-lemma}, the element
    \(g\) is a conjugator between \(y\) and \(z\).
    \end{ex}
    
    \begin{ex}\label{Stair-Not-Working}
	Consider the maps $y\left(t\right)=t-1$ and 
	\begin{displaymath}
	z\left(t\right)=\begin{dcases}
	\dfrac{-2t+2}{\frac{-3}{2}t+1},&\;\text{if}\;t\in\left[1,2\right];\\
	t-1,&\;\text{otherwise}.
	\end{dcases}
	\end{displaymath}
	Notice that $y,z\in H^{<}$ and that their initial and final germs are equal. We observe that \(L=1\)
    and \(R=2\).
    Now we take 
    \(\left(1,0\right)\in\mathrm{Aff}\left(\mathbb{R}\right)\)
    and construct a candidate conjugator between \(y^{3}\) and 
    \(z^{3}\). We follow the procedure of the proof of the Stair Algorithm  
    and define the maps $g_{0}\left(t\right)=t$ and
	\[g_{1}\left(t\right)=\begin{dcases}
	\dfrac{-\frac{7}{2}t+3}{-\frac{3}{2}t+1},&\;\text{if}\;t\in\left[1,2\right];\\
	\dfrac{-5t+9}{\frac{-3}{2}t+\frac{5}{2}},&\;\text{if}\;t\in\left[2,3\right];\\
	t,&\;\text{otherwise}.
	\end{dcases}\]
	We then define $g:=g_0g_1$ and see that
	\[g\left(t\right)=\begin{dcases}
	\dfrac{-\frac{7}{2}t+3}{-\frac{3}{2}t+1},&\;\text{if}\;t\in\left[1,2\right];\\
	\dfrac{-5t+9}{\frac{-3}{2}t+\frac{5}{2}},&\;\text{if}\;t\in\left[2,3\right];\\
	t,&\;\text{otherwise}.
	\end{dcases}\]
	\noindent We notice that $g$ is not a linear M\"{o}bius 
	function on $\left[2,3\right]$. Thus, by Theorem \ref{monod-stair}, the element $g$ cannot be a 
	conjugator between $y^{3}$ and $z^{3}$. By Lemma 
	\ref{power-conj-lemma}, $g$ cannot be a conjugator between 
	$y$ and $z$ as well.
\end{ex}

\begin{rem}\label{rem:subfied}
    Although this section is stated for $H$ for the sake of consistency of the paper, the proofs that all the results of this section hold for $H(A)$ too, with the following provisions:
    \begin{enumerate}
        \item The elements $L$ and $R$ defined
    for the affinity boxes in Lemma \ref{monod-initialbox} must live in $A$. This can always be achieved since, given any initial box $(-\infty,L]^2$, we can then take an
    $L' \le L$ in $L \in A$ and consider the box
    $(-\infty,L']^2$. Similarly we can do for the final one.
        \item Lemma \ref{monod-germsconj} needs to be stated by saying that $y_{-\infty}^{\mathrm{Aff}(A)}=z_{-\infty}^{\mathrm{Aff}(A)}$, where the affine
        group of $A$ is the subgroup of $\mathrm{Aff}(\mathbb{R})$ 
        defined by $\mathrm{Aff}(A)= (U(A)_{>0},\cdot)\ltimes (A,+)$, where $U(A)_{>0}$
        is the group of the positive units of $A$. Similarly, we must have
        $y_{+\infty}^{\mathrm{Aff}(A)}=z_{+\infty}^{\mathrm{Aff}(A)}$.
    \end{enumerate}
    
    \end{rem}

\section{The Mather invariant}\label{sec:mather}

	We now construct a conjugacy invariant for a class of functions, called \emph{Mather invariant}, by 
	adapting ideas from \cite{Mather1974,Mat2010}. While in the previous section we worked with $y,z \in H^{<}$, in this section we will work with $y,z \in H^{>}$ as it helps with the arguments and we can do so without loss of generality because of
	Remark \ref{thm:switch-<->}. We construct such invariant to deal with the case
	$y'(\pm \infty)=z'(\pm \infty)=1$ where the point of view of the Stair Algorithm cannot be used to cover all cases when computing element centralizers of elements which will be studied in the next section.

	In the remainder of this section 
	we assume \(y,z\in H^{>}\) such that \(y\left(t\right)=z\left(t\right)=t+b_{0}\) 
	if \(t\in \left(-\infty,L\right]\) and
	\(z\left(t\right)=y\left(t\right)=t+b_{1}\) if \(t\in \left[R,+\infty\right)\), 
	for some suitable \(b_{0},b_{1}>0\), where \(L\) and \(R\) 
	are, respectively, sufficiently large negative and
	positive real numbers.

	Let \(N\in\mathbb{Z}_{>0}\) be large enough so that
	\[
	y^{N}\left(\left(y^{-1}\left(L\right),L\right)\right) \cup z^{N}\left(\left(z^{-1}\left(L\right),L\right)\right)\subset \left(R,+\infty\right).
	\]
	We intend to find a map \(s\in H\) such that
	\(s\left(y^{k}\left(L\right)\right)=k,\) for every \(k\in\mathbb{Z}\).
	We thus define the map $s$ as
	\begin{align*}
		s\colon \mathbb{R}&\rightarrow \mathbb{R}\\
		t&\longmapsto s\left(t\right)\coloneqq \begin{dcases}
		s_{-1}\left(t\right),\;&\text{if}\;t\in(-\infty,L]\\
		s_{j}\left(t\right),\;&\text{if}\;t\in[y^{j}\left(L\right),y^{j+1}\left(L\right)]\\
		s_{N-1}\left(t\right),\;&\text{if}\;t\in[y^{N-1}\left(L\right),+\infty)
		\end{dcases}
	\end{align*}
	\noindent where
	\begin{align*}
		s_{-1}\colon \left[y^{-1}\left(L\right),L\right]&\rightarrow \left[-1,0\right] &s_{N-1}\colon [y^{N-1}\left(L\right),y^{N}\left(L\right)]&\rightarrow [N-1,N]\\
		t &\mapsto \dfrac{t-L}{b_{0}},\qquad &t &\mapsto \dfrac{t-y^{N-1}\left(L\right)}{b_{1}}+N-1
	\end{align*}
	and
	\begin{align*}
		s_{j}\colon [y^{j}\left(L\right),y^{j+1}\left(L\right)]&\rightarrow [j,j+1]\\
		t &\mapsto \dfrac{t-y^{j}\left(L\right)}{y^{j+1}\left(L\right)-y^{j}\left(L\right)}+j,\;\forall\;j=0,1,,\ldots,N-2.
	\end{align*}
	
	Since \(L\) is a fixed point of some hyperbolic element from 
	\(\mathrm{PSL}_{2}\left(\mathbb{R}\right)\), so is \(y^{j}\left(L\right)\), for 
	every \(j=0,1,\ldots,N-2,N-1\). Also, since all of the $s_i$'s are affine with strictly
	positive slope, they can all be written as $s_i(t)=a_i^2 t + a_i b_i$ for suitable $a_i,b_i \in \mathbb{R}$
	and so  \(s\in H\). Moreover, it is clear that 
	\(s\left(y^{k}\left(L\right)\right)=k\), for all \(k\in\mathbb{Z}\).
	If we define \(\overline{y}\coloneqq sys^{-1}\) and \(\overline{z}\coloneqq szs^{-1}\),
	we get that both functions are well-defined and lie in \(H\). Now, we observe that
	\begin{itemize}
		\item[(i)] If 
		\(t\in \left(-\infty,0\right]\cup\left[N-1,+\infty\right)\), then
		\(\overline{y}\left(t\right)=\overline{z}\left(t\right)=t+1\);
		\item[(ii)] \(\overline{y}^{N},\overline{z}^{N}\in H\).
	\end{itemize}
	
	We define the circles
	\[C_{0}=\left(-\infty,0\right]/\{t\sim t+1\}\quad\text{and}\quad C_{1}=\left[N-1,+\infty\right)/\{t\sim t+1\}.\]
	Let us consider the natural projections
	\(p_{0}\colon\left(-\infty, 0\right]\rightarrow C_{0}\) and \\ 
	\(p_{1}\colon\left[N-1, +\infty\right)\rightarrow C_{1}\).
	\noindent Then we restrict \(\overline{y}^{N}\) to the interval 
	\(\left[-1,0\right]\) such that \(p_{0}\) surjects it onto \(C_{0}\). 
	Since \(N\) is sufficiently large so that 
	\(\overline{y}^{N}\left(\left(\overline{y}^{-1}\left(L\right),L\right)\right)\subset \left[R,+\infty\right)\), 
	it follows that \(\overline{y}^{N}\) maps \(\left[-1,0\right]\) to 
	\(\left[R,+\infty\right)\). Passing to quotients, we define 
	\(\overline{y}^{\infty}\colon C_{0}\rightarrow C_{1}\) such as 
	\(\overline{y}^{\infty}\left(\left[t\right]\right)=\left[\overline{y}^{N}\left(t\right)\right]\) 
	making the following diagram commutative
	\[\xymatrix{
		\ar @{} [dr] |{\circlearrowright}
		\left[-1, 0\right] \ar[d]_-{p_{0}} \ar[r]^-{\overline{y}^{N}} & \left[N-1,+\infty\right) \ar[d]^-{p_{1}} 
		\\C_{0} \ar@{-->}[r]_{\overline{y}^{\infty}} & C_{1}
	}\]
	We emphasize that the map \(\overline{y}^{\infty}\) does not depend on the 
	specific chosen value of \(N\), since if \(m\geq N\),
	\(\overline{y}^{m}\left(t\right)=\overline{y}^{m-N}\left(\overline{y}^{N}\left(t\right)\right)\),
	where \(\overline{y}^{N}\left(t\right)\in \left(R,+\infty\right)\) and
	\[\overline{y}^{N}\left(t\right)\sim \overline{y}^{N}\left(t\right)+1=\overline{y}\left(\overline{y}^{N}\left(t\right)\right)\sim\ldots\sim \overline{y}^{m-N}\left(\overline{y}^{N}\left(t\right)\right)=\overline{y}^{m}\left(t\right).\]
	Similarly, we define the map \(\overline{z}^{\infty}\). We remark that both these 
	maps are piecewise-M\"obius homeomorphisms from the circle \(C_{0}\) to the 
	circle \(C_{1}\). They are called the \textbf{Mather invariants} of 
	\(\overline{y}\) and \(\overline{z}\).

    Assume now that there exists a $g \in H$ such that $gz=yg$. By conjugating by $s$,  we get the equation \(\overline{gz}=\overline{y}\overline{g}\), where $\overline{g} \in H$. Since $\overline{y}$ and $\overline{z}$ are equal to the translation $t \mapsto t+1$ around $\pm \infty$, then the equation \(\overline{gz}=\overline{y}\overline{g}\) implies
    that $\overline{g} \in H$ is periodic 
    for real numbers that are sufficiently large positive and negative and so, around $-\infty$ where $\overline{g}(t)=a^2t+ab$ is affine, we must have that
    $a^2=1$ so $\overline{g}$ is a translation, otherwise $\overline{g}$ would not be periodic. Similarly, $\overline{g}$ is a translation around $+\infty$. Thus $g$ itself is a translation
    around $\pm \infty$. We record this observation for independent later use.
    
    \begin{rem}
    \label{thm:affine-commute-translation}
        If an affine map $g(t)=a^2t+ab$ commutes with a translation
        $z(t)=t+k$, then $a^2=1$.
    \end{rem}

	Now, going back to the argument above, we see that it induces the equation \(\overline{gz}^{N}=\overline{y}^{N}\overline{g}\), 
	where \(\overline{g}\) is periodic on 
	\(\left(-\infty,0\right)\cup\left(N-1,+\infty\right)\) since it commutes with $\overline{y}$ and $\overline{z}$ on such intervals, and so \(\overline{g}\) passes
	to quotients and becomes
	\begin{align}\label{rot}
		v_{1, m}\overline{z}^{\infty}=\overline{y}^{\infty}v_{0,\ell},
	\end{align}
	as done in \cite{BurMatVent2016,Mat2010}, since 
	\(\overline{g},\overline{y},\overline{z}\in H\) and
	\(v_{0,\ell}\coloneqq p_{0}\overline{g}\) and 
	\(v_{1, m}\coloneqq p_{1}\overline{g}\) are rotations of the circles \(C_{0}\)
	and \(C_{1}\), respectively, where \(\ell,m\) are the translation terms of \(g\) 
	on \(\left(-\infty,0\right)\) and \(\left(N-1+\infty\right)\), respectively.

	The proof of the next result shows the relation between the Stair Algorithm and the Mather invariant.

	\begin{thm}\label{matconj}
		Let \(y,z\in H^{>}\) be such that 
		\(y\left(t\right)=z\left(t\right)=t+b_{0}\) for 
		\(t\in(-\infty, L]\) and \(y\left(t\right)=z\left(t\right)=t+b_{1}\) for 
		\(t\in [R,+\infty)\) and let 
		\(\overline{y}^{\infty},\overline{z}^{\infty}\colon C_{0}\rightarrow C_{1}\)
		be the corresponding Mather invariants. Then \(y\) and \(z\) 
		are conjugate in \(H\) if and only if 
		\(\;\overline{y}^{\infty}\) and \(\;\overline{z}^{\infty}\) 
		differ by rotations \(v_{0,\ell}\) and \(v_{1,m}\) of the 
		domain and range circles, for some \(\ell,m\in\mathbb{R}\):
		\[\xymatrix{\ar @{} [dr] |{\circlearrowright}
			C_{0} \ar[d]_-{v_{0, \ell}} \ar[r]^-{\overline{z}^{\infty}} & C_{1} \ar[d]^-{v_{1, m}} \\
			C_{0} \ar[r]_{\overline{y}^{\infty}} & C_{1}
		}\]
	\end{thm}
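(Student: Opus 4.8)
The plan is to prove the two implications separately, exploiting the setup already built in this section. For the forward direction, suppose $y^g = z$ for some $g \in H$, i.e.\ $gz = yg$. Conjugating by the marking map $s$ gives $\overline{g}\,\overline{z} = \overline{y}\,\overline{g}$ with $\overline{g} = sgs^{-1} \in H$. As observed in the paragraph preceding Remark~\ref{thm:affine-commute-translation}, since $\overline{y}$ and $\overline{z}$ agree with $t \mapsto t+1$ near $\pm\infty$, the element $\overline{g}$ must be a translation near $-\infty$ and near $+\infty$ (its affine germ has slope $1$ by Remark~\ref{thm:affine-commute-translation}), with translation terms $\ell$ near $-\infty$ and $m$ near $+\infty$. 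Raising the conjugacy relation to a power as in Lemma~\ref{power-conj-lemma} gives $\overline{g}\,\overline{z}^{\,N} = \overline{y}^{\,N}\overline{g}$; since $\overline{g}$ is periodic on $(-\infty,0) \cup (N-1,+\infty)$ it descends through the projections $p_0, p_1$ to rotations $v_{0,\ell}$ of $C_0$ and $v_{1,m}$ of $C_1$, and the relation passes to the quotient circles to yield exactly equation~\eqref{rot}, $v_{1,m}\,\overline{z}^{\infty} = \overline{y}^{\infty}\,v_{0,\ell}$, which is the asserted commuting square.

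For the converse, suppose we are given rotations $v_{0,\ell}$ and $v_{1,m}$ making the square commute, i.e.\ $v_{1,m}\,\overline{z}^{\infty} = \overline{y}^{\infty}\,v_{0,\ell}$. I would reverse the construction: first lift $v_{0,\ell}$ to the translation $t \mapsto t+\ell$ on $(-\infty,0]$, which is a legitimate germ at $-\infty$ for a candidate conjugator of $\overline{y}$ and $\overline{z}$ (both lying in $H^{>}$, and translations near $\pm\infty$). I would then run the Stair Algorithm (Theorem~\ref{monod-stair}, applied to $\overline{y}, \overline{z}$ after passing to $H^{<}$ via inverses as in Remark~\ref{thm:switch-<->}, or equivalently the reversed version starting from the final box) with this initial germ to produce the unique candidate conjugator $\overline{g}_0$ on $(-\infty, \overline{z}^{-N}(L)]$, namely $\overline{g}_0 = \overline{y}^{-N} g_0 \overline{z}^{N}$. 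The content of the commuting-square hypothesis is precisely what forces the tail of this candidate, on $[R, \overline{z}^{-N}(L)]$, to be affine (a single M\"obius map of slope $1$): passing $\overline{g}_0$ through $p_1$ and using that $\overline{y}^{\infty}, \overline{z}^{\infty}, v_{0,\ell}$ satisfy the square shows the image on $C_1$ is the rotation $v_{1,m}$, hence $\overline{g}_0$ is periodic (a translation) on the final box, so the Stair Algorithm terminates and extends $\overline{g}_0$ to an element $\overline{g} \in H$ with $\overline{g}\,\overline{z}^{N} = \overline{y}^{N}\,\overline{g}$. By Lemma~\ref{power-conj-lemma} this $\overline{g}$ conjugates $\overline{y}$ to $\overline{z}$, and conjugating back by $s^{-1}$ gives $g := s^{-1}\overline{g} s \in H$ with $y^g = z$.

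The main obstacle I anticipate is the converse direction's bookkeeping: one must check carefully that the marking map $s$ is compatible with the rotation data, i.e.\ that a rotation of $C_0$ by $\ell$ genuinely corresponds to a translation of the line by $\ell$ on the relevant interval under $s$ (this is immediate from the definition of $s$, since $s$ is affine with slope $1/b_0$ near $-\infty$ — so one may need to rescale $\ell$ by $b_0$, or simply absorb this into the identification $C_0 \cong \mathbb{R}/\mathbb{Z}$), and likewise near $+\infty$ with $b_1$. One must also verify that the commuting square over the circles implies the line-level relation $\overline{g}\,\overline{z}^{N} = \overline{y}^{N}\,\overline{g}$ holds not just up to the quotient but exactly on the interval $[-1,0]$ after choosing the correct lifts — this is where the ``difference by rotations'' hypothesis is used in full strength, pinning down the free affine tail produced by the Stair Algorithm. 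The forward direction, by contrast, is essentially a repackaging of the material already developed above culminating in equation~\eqref{rot}, so the real work is organizing the converse into a clean application of Theorem~\ref{monod-stair}.
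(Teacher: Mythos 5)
Your proposal is correct and follows essentially the same route as the paper: the forward direction repackages the computation culminating in equation~\eqref{rot}, and the converse builds the candidate conjugator as the limit $\lim_n y^{n}g_0z^{-n}$ from the initial germ $(1,\ell)$ via the (reversed) Stair Algorithm, then uses the commuting square and cancellation to identify the induced map on $C_1$ with $v_{1,m}$, forcing the candidate to be a translation near $+\infty$ and hence to lie in $H$. The only quibbles are bookkeeping ones you already flag yourself (the sign of the powers for $H^{>}$ versus $H^{<}$, and the rescaling by $s$), plus the small final step that a periodic continuous map inducing a rotation must be an honest translation $t\mapsto t+m+r$ for some integer $r$.
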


	\begin{proof}
		The calculations above yield that, if \(g\in H\)
		conjugates \(y\) and \(z\), then equation 
		\(\left(\ref{rot}\right)\) is satisfied, which is equivalent to say that 
		\(\;\overline{y}^{\infty}\) and 
		\(\;\overline{z}^{\infty}\) differ by rotations \(v_{0,\ell}\) 
		and \(v_{1,m}\) of the domain and range circles, for some
		\(\ell,m\in\mathbb{R}\).
		
		Conversely, let us assume that there are \(\ell,m\in\mathbb{R}\)
		such that equation \(\left(\ref{rot}\right)\) is satisfied.
		Then, we choose \(g_{0}\in H\) which is affine in the initial 
		box with a initial germ \((g_0)_{-\infty}=\left(1,\ell\right)\).
		Then we define a map $g$ as the following pointwise limit
		\(g\left(t\right)\coloneqq \lim_{n\rightarrow+\infty} y^{n}g_{0}z^{-n}\left(t\right)\).
		By the Stair Algorithm Theorem \ref{monod-stair}, we have 
		\(gz=yg\), where \(g\in \mathrm{Homeo_+}(\mathbb{R})\) and it is such that, on any bounded interval, it coincides with the restriction of some function 
		$\mathrm{PPSL}(\mathbb{R})$ to such interval. We need to 
		show that \(g\in H\). By construction, \(g\) has finitely many 
		breakpoints in \(\left(-\infty,N-1\right]\).
		Conjugating both sides of the equation \(gz=yg\) by \(s\), we 
		get
		\[\overline{gz}=\overline{yg}.\]
		\noindent For all
		\(t\in\left(-\infty,0\right]\cup\left[N-1,+\infty\right)\), 
		we have
		\(\overline{y}\left(t\right)=\overline{z}\left(t\right)=t+1\).
		Thus 
		\(\overline{g}\left(t+1\right)=\overline{g}\left(t\right)+1\),
		for each \(t\in\left(-\infty,0\right]\cup\left[N-1,+\infty\right)\) and we can pass to quotients. Moreover, as argued above during the definition of the Mather
		invariants, we have that $\overline{g}$ is a translation $\overline{g}(t)=t+\ell$ on $(-\infty,0]$, while we still need to show that $\overline{g}$ is a translation on $[N-1,+\infty)$. Up to switch the role of
		$\overline{y}$ and $\overline{z}$, we can assume that $\ell \ge 0$.
		Passing the equation \(\overline{g}\overline{z}^{N}=\overline{y}^{N}\overline{g}\)
		to quotients, we obtain
		\[\overline{g}_{ind}\overline{z}^{\infty}\left(\left[t\right]\right)=\overline{y}^{\infty}v_{0, \ell}\left(\left[t\right]\right),\]
		\noindent for a suitable well-defined \(\overline{g}_{ind}\).
		By equation 
		\(\left(\ref{rot}\right)\), we have
		\[\overline{g}_{ind}\overline{z}^{\infty}\left(\left[t\right]\right)=v_{1, m}\overline{z}^{\infty}\left(\left[t\right]\right),\]
		and so, by the cancellation law, we have
		\[\overline{g}_{ind}=v_{1, m}\]
		so that $\overline{g}_{ind}$ is a rotation by \(m\) of the 
		circle \(C_{1}\).
		We now choose $N_0 \ge N$ large enough so that $d:=\overline{z}^{N_0}(-1-\ell)\ge N-1$ so that
		\[
		\overline{z}^{N_0}([-1-\ell,-\ell])=
		\overline{z}^{N_0}([-1-\ell,z(-1-\ell)])=[d,z(d)]=[d,d+1],
		\]
		and
		\[\overline{g}(d)=\overline{y}^{N_0}(\overline{g}(-1-\ell))= \overline{y}^{N_0}(-1)\ge N-1.
		\]
		Hence
		the following commutative diagram holds
		\[\xymatrix{
			& & & \left[-1-\ell,-\ell\right] \ar[dd]_(.5){p_{0}}|(.29)\hole \ar[rr]^-{\overline{z}^{N_0}} \ar[dlll]_{\overline{g}} & & [d,+\infty) \ar[dd]^(.5){p_{1}} \ar[dlll]_{\overline{g}}\\
			\left[-1,0\right] \ar[dd]_-{p_{0}} \ar[rr]_(.5){\overline{y}^{N_0}} & & \left[N-1,+\infty\right) \ar[dd]^-{p_{1}} \\
			& & & C_{0} \ar[rr]^(.5){\overline{z}^{\infty}} \ar[dlll]_{v_{0,\ell}}|(.42)\hole & & C_{1} \ar[dlll]^{\overline{g}_{ind}=v_{1, m}} \\
			C_{0} \ar[rr]_{\overline{y}^{\infty}} & & C_{1}
		}\]
		To finish the proof, 
		we need to see that \(\overline{g}\) is a affine M\"obius map 
		on \(\left[N-1,+\infty\right)\), which will mean that 
		\(\overline{g} \in H\). From the previous commutative diagram 
		we get \(v_{1, m}p_{1}=p_{1}\overline{g}\), which implies that
		\(\left[\overline{g}\left(t\right)\right]=\left[t+m\right]\)
		for \([t]\in C_1\).
		By definition of the equivalence relation and the fact that 
		\(\overline{g}\) is a periodic continuous function, we have that there 
		exists some \(r\in\mathbb{Z}\) such that,
		\(\overline{g}\left(t\right)\coloneqq t+m+r\),
		for all \(t\in\left[N-1,+\infty\right)\). Therefore, 
		\(\overline{g} \in H\).
	\end{proof}


\begin{rem}
    The results of this section rely on Lemma \ref{monod-transitive} and so,
    in order to generalize them to $H(A)$, one needs to prove a generalized version of
    Lemma \ref{monod-transitive} for $H(A)$. In particular, the 
    construction of the homeomorphism $s$ at the beginning of this section
    requires a version of Lemma \ref{monod-transitive} for
    $H(A)$ to construct the maps $s_j$ for $j=0,\ldots, N=2$. This is thus true for Section \ref{sec:centralizer} too.
    \end{rem}

\section{Centralizers}\label{sec:centralizer}
In this section, we use the conjugacy tools we have just constructed to
calculate the centralizers of elements from 
$H$. We start by performing some calculations for centralizers 
of elements in \(\mathrm{Aff}\left(\mathbb{R}\right)\) and use this information to classify the centralizers of elements from $H$.

\subsection{Centralizers in \(\mathrm{Aff}(\mathbb{R})\)}\label{subsec:centralizers-affine-group}
Since Lemma 
\ref{monod-centralizers} gives a monomorphism 
$\varphi_{z}\colon C_{H}\left(z\right) \to \mathrm{Aff}\left(\mathbb{R}\right)$,
it 
makes sense to investigate centralizers in \(\mathrm{Aff}\left(\mathbb{R}\right)\).

If $\left(a,b\right)\in \mathrm{Aff}(\mathbb{R})$ and $a\neq 1$, then
$\left(c,d\right)\in C_{\mathrm{Aff}(\mathbb{R})}\left(a,b\right)$ if and only if
\[\left(c,d\right)=\left(a^{-1},-a^{-1}b\right)\left(c,d\right)\left(a,b\right)=\left(c,-a^{-1}b+a^{-1}d+a^{-1}bc\right)\]
which is equivalent to $d=\dfrac{b(c-1)}{a-1}$, and so
\[
C_{\mathrm{Aff}(\mathbb{R})}(a,b)=\left\{\left(c,\dfrac{b(c-1)}{a-1}\right)\in \mathrm{Aff}(\mathbb{R})\; \bigg \vert \; c\in \mathbb{R}_{>0}\right\}\cong
	\left(\mathbb{R},+\right).
\]
If $\left(1,b\right)\in \mathrm{Aff}(\mathbb{R})$ and
$\left(c,d\right)\in C_{\mathrm{Aff}(\mathbb{R})}\left(a,b\right)$, we get
\[(c,d)=(1,-b)(c,d)(1,b)=(c,b(c-1)+d)\]
 which implies that
\[d=b(c-1)+d \Rightarrow b(c-1)=0 \Rightarrow b=0\;\textrm{or}\;c=1.\]
 If $b=0$, then
\[C_{\mathrm{Aff}\left(\mathbb{R}\right)}(1,0)=\mathrm{Aff}\left(\mathbb{R}\right).\]
 If $b\neq 0$, then
\[C_{\mathrm{Aff}\left(\mathbb{R}\right)}(1,b)=\{(1,d)\mid d\in \left(\mathbb{R},+\right)\} \cong \left(\mathbb{R},+\right).\]

We collect our calculations in the following result.

\begin{lem}
\label{thm:centralizer-Aff}
If $(1,0)\ne (a,b)\in \mathrm{Aff}\left(\mathbb{R}\right)$, then
\(C_{\mathrm{Aff}\left(\mathbb{R}\right)}\left(a,b\right)\cong
\left(\mathbb{R},+\right)\).
\end{lem}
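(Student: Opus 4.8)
The plan is to simply assemble the two case computations carried out immediately before the statement and, in each case, exhibit an explicit isomorphism onto $(\mathbb{R},+)$; the cases are exhaustive because $(a,b)\ne(1,0)$ forces either $a\ne 1$ or ($a=1$ and $b\ne 0$).

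First I would treat the case $a\ne 1$. The computation just above gives
$C_{\mathrm{Aff}(\mathbb{R})}(a,b)=\{(c,\,b(c-1)/(a-1)) \mid c\in\mathbb{R}_{>0}\}$. I would observe that the first-coordinate projection $(c,d)\mapsto c$, restricted to this set, is a bijection onto $\mathbb{R}_{>0}$ (the second coordinate being determined by the first) and a homomorphism: by the group law $(a,b)(c,d)=(ac,\,b+ad)$ the product of two elements of the centralizer again has first coordinate the product of the first coordinates, which at the same time re-confirms that the set is genuinely a subgroup. Composing this isomorphism $C_{\mathrm{Aff}(\mathbb{R})}(a,b)\cong(\mathbb{R}_{>0},\cdot)$ with the logarithm yields $C_{\mathrm{Aff}(\mathbb{R})}(a,b)\cong(\mathbb{R},+)$.

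Next I would handle the case $a=1$; since $(a,b)\ne(1,0)$ we have $b\ne 0$, and the preceding computation gives $C_{\mathrm{Aff}(\mathbb{R})}(1,b)=\{(1,d)\mid d\in\mathbb{R}\}$. Here the map $(1,d)\mapsto d$ is an isomorphism onto $(\mathbb{R},+)$ because $(1,d_1)(1,d_2)=(1,d_1+d_2)$. Combining the two cases proves the lemma.

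I do not expect any real obstacle: the lemma is essentially a bookkeeping consequence of the explicit centralizer descriptions already derived. The only point needing a line of justification is, in the case $a\ne 1$, that the first-coordinate projection is well defined on the centralizer and multiplicative, both of which are immediate from the explicit parametrization and the semidirect-product multiplication.
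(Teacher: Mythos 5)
Your proposal is correct and follows essentially the same route as the paper: the lemma there is explicitly just a collection of the two case computations ($a\ne 1$ giving the curve $\{(c,\,b(c-1)/(a-1))\mid c\in\mathbb{R}_{>0}\}\cong(\mathbb{R}_{>0},\cdot)\cong(\mathbb{R},+)$, and $a=1$, $b\ne 0$ giving the translation subgroup $\{(1,d)\}\cong(\mathbb{R},+)$). Your only addition is spelling out the explicit isomorphisms (first-coordinate projection plus logarithm, respectively second-coordinate projection), which the paper leaves implicit.
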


\subsection{Centralizers in \(H\)}\label{subsec:centralizer-h}
We start by noticing that, since \(H\) is torsion-free by Lemma \ref{thm:torsion-free}, the
subgroup \(C_{H}\left(z\right)\) is infinite for any non-trivial $z \in H$. 
Next, we will divide the study of centralizers of the elements from \(H\)
into several cases. 

First, we consider \(z\in H\) without breakpoints,
that is, the case where \(z\) is an affine map. Let us consider 
\(z\left(t\right)=a^{2}t+ab\) for all \(t\in \mathbb{R}\). If
$a\neq\pm1$ we have the following result.
\begin{prop}
	Let \(z\in H\) be so that \(z\left(t\right)=a^{2}t+ab\), for all
	\(t\in \mathbb{R}\), with \(a\neq\pm1\). Then
	\(C_{H}\left(z\right)\cong\left(\mathbb{R},+\right)\).
\end{prop}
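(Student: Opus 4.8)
The plan is to exhibit an explicit conjugator that realizes $z$ as a Möbius transformation and then reduce the computation of $C_H(z)$ to a centralizer computation inside $\mathrm{PSL}_2(\mathbb{R})$ (or inside $H$ by a direct argument), and finally invoke Lemma \ref{thm:centralizer-Aff}. First I would observe that since $z(t)=a^2t+ab$ with $a\neq\pm1$, the map $z$ is a hyperbolic affine map with a unique fixed point $t_0=ab/(1-a^2)\in\mathbb{R}$ (together with the fixed point $\infty$). Any $g\in C_H(z)$ must permute the fixed-point set $\{t_0,\infty\}$ of $z$; since elements of $H$ fix $\infty$ and preserve orientation, $g$ must fix $t_0$ as well. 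This is the key structural constraint.

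Next I would use the monomorphism $\varphi_z\colon C_H(z)\to\mathrm{Aff}(\mathbb{R})$, $g\mapsto g_{-\infty}$, from Lemma \ref{monod-centralizers} (applicable after replacing $z$ by $z^{-1}$ if necessary so that $z\in H^{<}$, which is harmless for centralizers). The image $\varphi_z(C_H(z))$ is a subgroup of $\mathrm{Aff}(\mathbb{R})$ contained in $C_{\mathrm{Aff}(\mathbb{R})}(z_{-\infty})$, because $z$ is affine everywhere so $z_{-\infty}=(a^2,ab)=z$, and conjugation respects germs at $-\infty$. Since $(a^2,ab)\neq(1,0)$, Lemma \ref{thm:centralizer-Aff} gives $C_{\mathrm{Aff}(\mathbb{R})}(a^2,ab)\cong(\mathbb{R},+)$, so $C_H(z)$ embeds into $(\mathbb{R},+)$. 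Conversely I would show surjectivity onto this copy of $(\mathbb{R},+)$: for each $c\in\mathbb{R}_{>0}$, the affine map $h_c(t)=ct + \tfrac{b(c-1)}{a-1}\cdot\tfrac{1}{\cdot}$—more precisely the element of $C_{\mathrm{Aff}(\mathbb{R})}(a^2,ab)$ computed in Subsection \ref{subsec:centralizers-affine-group}—is itself a globally affine homeomorphism of $\mathbb{R}\mathrm{P}^1$ fixing $\infty$, hence lies in $H$ (it has no breakpoints, and its germs at $\pm\infty$ are affine). Therefore $\varphi_z$ is onto $C_{\mathrm{Aff}(\mathbb{R})}(a^2,ab)$, and $C_H(z)\cong(\mathbb{R},+)$.

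The main point requiring care — though it is not really an obstacle — is justifying that an element of $C_H(z)$ cannot do anything ``extra'' away from $\pm\infty$: a priori $g$ could be a nontrivial piecewise-projective map that still commutes with the affine map $z$. This is exactly what the injectivity of $\varphi_z$ (Lemma \ref{monod-centralizers}, via Corollary \ref{g=id}) rules out: two elements of $C_H(z)$ with the same initial germ coincide, so $C_H(z)$ is determined by its image in $\mathrm{Aff}(\mathbb{R})$, which we have just identified. I would also remark that the hypothesis $a\neq\pm1$ is used twice: to know $z$ is hyperbolic (so the fixed-point structure is as described and, more importantly, so that Lemma \ref{thm:centralizer-Aff} applies with $a^2\neq1$), and implicitly to ensure $z\neq\mathrm{id}$ so $C_H(z)$ is a proper situation. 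Assembling these steps yields $C_H(z)\cong C_{\mathrm{Aff}(\mathbb{R})}(a^2,ab)\cong(\mathbb{R},+)$, as claimed.
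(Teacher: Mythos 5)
Your overall route is the same as the paper's: sandwich $\varphi_z\bigl(C_H(z)\bigr)$ between the image of the explicit affine centralizing subgroup and $C_{\mathrm{Aff}(\mathbb{R})}(a^2,ab)$, then invoke Lemma \ref{thm:centralizer-Aff}. The extra observation that $g$ must fix $t_0=ab/(1-a^2)$ is correct and harmless. The surjectivity half of your argument is fine and matches the paper's subgroup $T$.

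The problem is the injectivity half, and your attempted justification makes the gap visible. You write that Lemma \ref{monod-centralizers} is ``applicable after replacing $z$ by $z^{-1}$ if necessary so that $z\in H^{<}$.'' But since $a\neq\pm1$, the map $z$ fixes $t_0\in\mathbb{R}$ --- as you yourself note in your first paragraph --- so neither $z$ nor $z^{-1}$ is a one-bump function, and the hypothesis $z\in H^{<}$ of Lemma \ref{monod-centralizers} (equivalently of Corollary \ref{g=id}, which drives its injectivity) can never be arranged. This is not a cosmetic issue: the injectivity of $g\mapsto g_{-\infty}$ on $C_H(z)$ is exactly the assertion that an element of $C_H(z)$ cannot ``do anything extra away from $-\infty$,'' and for a $z$ with an interior fixed point the iteration argument of Corollary \ref{g=id} only propagates the identity up to $t_0$, saying nothing about $[t_0,+\infty)$. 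Concretely, for $z(t)=a^2(t-t_0)+t_0$ the map $g$ defined by $g(t)=t$ for $t\le t_0$ and $g(t)=c^2(t-t_0)+t_0$ for $t\ge t_0$ lies in $H$ (the breakpoint $t_0$ is a fixed point of a hyperbolic element, and both pieces are affine), commutes with $z$, and has $g_{-\infty}=(1,0)$ while $g\neq\mathrm{id}$. So the map $g\mapsto g_{-\infty}$ is \emph{not} injective on $C_H(z)$ here, and the step where you conclude $C_H(z)\hookrightarrow C_{\mathrm{Aff}(\mathbb{R})}(a^2,ab)$ fails. (The paper's own proof cites Lemma \ref{monod-centralizers} for this $z$ without addressing the hypothesis $z\in H^{<}$ either, so you have reproduced its argument together with its weak point; but the justification you supply for that citation is the step that would break.) To repair the argument one must separately control centralizing elements on each side of $t_0$, which is a genuinely different (and stronger) statement than what Corollary \ref{g=id} provides.
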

\begin{proof}
	First, notice that in this case,
	\(z_{-\infty}=z_{+\infty}=(a^{2},ab)\). 
	A direct calculation shows that 
	\[T\coloneqq\left\{f\in H\mid f\left(t\right)=ct+\dfrac{ab(c-1)}{a^{2}-1},\;\forall\;t\in\mathbb{R},\;c>0\right\}\]
	is a subgroup of \(C_{H}\left(z\right)\). 
	Using the map \(\varphi_{z}\) from Lemma \ref{monod-centralizers} we have
	\[\varphi_{z}\left(T\right)=\left\{\left(c,\dfrac{ab(c-1)}{a^{2}-1}\right)\mid c\in \left(\mathbb{R}_{>0},\cdot\right)\right\}=C_{\mathrm{Aff}(\mathbb{R})}(a^{2},ab).\]
	From
	\(\varphi_{z}\left(T\right)\leq \varphi_{z}\left(C_{H}\left(z\right)\right)\leq C_{\mathrm{Aff}\left(\mathbb{R}\right)}(a^{2},ab)\),
	we get \(\varphi_{z}\left(C_{H}\left(z\right)\right)=C_{\mathrm{Aff}(\mathbb{R})}(a^{2},ab)\).
	\noindent Since $\varphi_{z}$ is a group monomorphism, we have
	\(C_{H}\left(z\right)\cong C_{\mathrm{Aff}(\mathbb{R})}(a^{2},ab)\).
	Therefore, \(C_{H}\left(z\right)\cong\left(\mathbb{R},+\right)\)
	by Lemma \ref{thm:centralizer-Aff}.
\end{proof}
\indent From the previous result, we have the following.
\begin{cor}
\label{thm:centralizers-affine}
	Let \(y\in H\) be an element such that \(y=g^{-1}zg\), 
	where \(z,g\in H\) and \(z\) is an affine map \(z\left(t\right)=a^{2}t+ab\), with $a^2\ne 1$. Then 
	\(C_{H}(y)\cong \left(\mathbb{R},+\right)\).
\end{cor}

Now we consider the case 
\(z\left(t\right)=a^{2}t+ab\) for all \(t\in \mathbb{R}\) and with
$a = \pm1$, that is, $z$ is a translation.

\begin{prop}\label{centralizer-translations}
	If \(z\in H^{<}\) is a translation, then 
	\(C_{H}\left(z\right)\cong \left(\mathbb{R},+\right)\).
\end{prop}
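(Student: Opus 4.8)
The plan is to realize $z$ as the translation $z(t)=t+k$ for some $k>0$ (since $z\in H^{<}$, after possibly replacing $z$ by $z^{-1}$ we may take $k>0$; note $z\in H^{<}$ forces $z$ to have affine germs which, being a translation, coincide with $z$ everywhere) and to produce an explicit isomorphism $C_H(z)\cong(\mathbb{R},+)$. First I would observe that any $g\in C_H(z)$ must be \emph{periodic}, i.e. $g(t+k)=g(t)+k$ for all $t\in\mathbb{R}$: this is immediate from $gz=zg$. In particular, around $\pm\infty$ the affine germ $g_{-\infty}=(a^2,ab)$ commutes with the translation $z$, so by Remark \ref{thm:affine-commute-translation} we get $a^2=1$, and likewise at $+\infty$; thus every element of $C_H(z)$ has initial (and final) germ of the form $(1,\ell)$, a pure translation term.

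Next I would quotient by the $z$-action. Set $C=\mathbb{R}/\{t\sim t+k\}$, a circle, with projection $p\colon\mathbb{R}\to C$. Since each $g\in C_H(z)$ is periodic, it descends to a homeomorphism $\overline{g}\colon C\to C$; and since $g$ coincides with a translation near $-\infty$, the induced map $\overline{g}$ fixes nothing or rather acts as a rotation on the image of a neighborhood of $-\infty$ in $C$ — but a periodic piecewise-Möbius homeomorphism of $\mathbb{R}$ with only finitely many breakpoints modulo $k$ that is a \emph{genuine} rotation on an arc must be a rotation everywhere. Concretely: the homeomorphism $g$ is a translation $t\mapsto t+\ell$ on $(-\infty,L]$ for some $L$; by periodicity it is a translation by $\ell$ on $(-\infty,L]+k\mathbb{Z}$; but those intervals together with the breakpoint count (finitely many in one fundamental domain, hence finitely many after translating by $k$ through a single period, forcing them to be $k$-translates of each other or to cancel) force $g(t)=t+\ell$ on all of $\mathbb{R}$ — otherwise $g$ would have a breakpoint in the interior of some fundamental domain not matched by periodicity, contradicting that $g$ agrees with a translation on a full period's worth of points near $-\infty$. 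This is the heart of the argument and the step I expect to be the main obstacle: carefully ruling out non-affine periodic elements, i.e.\ showing a periodic $g\in H$ which is affine (indeed a translation) on one fundamental domain is a translation on all of $\mathbb{R}$. The cleanest route is: if $g$ is a translation by $\ell$ on $(-\infty,L]$, then $g^{-1}zg=z$ together with $g_{-\infty}=(1,\ell)$; compare $g$ with the honest translation $h(t)=t+\ell$, which lies in $H^{<}$... no, $h$ is a translation so $h\notin H^{<}$, but $h^{-1}g$ has initial germ $(1,0)$ and commutes with $z$, so is a candidate to which Corollary \ref{g=id} applies once we check $h^{-1}g\in H$ and $z\in H^{<}$: indeed $z\in H^{<}$ by hypothesis, $g,h\in H$, so $h^{-1}g\in H$ has initial germ $(1,0)$ and centralizes... wait, $h$ commutes with $z$ since both are translations, so $h^{-1}g$ centralizes $z$, and by Corollary \ref{g=id} $h^{-1}g=\mathrm{id}$, i.e. $g=h$ is the translation by $\ell$.

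With that in hand the proof finishes cleanly. Define $\psi\colon C_H(z)\to(\mathbb{R},+)$ by $\psi(g)=\ell$ where $g_{-\infty}=(1,\ell)$; equivalently $\psi=\varphi_z$ followed by the projection $\mathrm{Aff}(\mathbb{R})\to\mathbb{R}$, which is legitimate because we showed $\varphi_z(C_H(z))\subseteq\{(1,\ell):\ell\in\mathbb{R}\}$. By Lemma \ref{monod-centralizers}, $\varphi_z$ is a monomorphism, so $\psi$ is an injective homomorphism. For surjectivity, given any $\ell\in\mathbb{R}$ the translation $t\mapsto t+\ell$ is an element of $H$ (it has affine germs and no breakpoints) commuting with $z$, and it maps to $\ell$; in fact the previous paragraph shows these translations are \emph{all} of $C_H(z)$. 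Hence $C_H(z)=\{t\mapsto t+\ell\mid \ell\in\mathbb{R}\}\cong(\mathbb{R},+)$, as claimed.
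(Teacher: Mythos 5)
Your proposal is correct and follows essentially the same route as the paper: use Remark \ref{thm:affine-commute-translation} to force the germ of any \(g\in C_H(z)\) at \(-\infty\) to be a pure translation \((1,\ell)\), deduce that \(g\) is the translation by \(\ell\) everywhere, and conclude \(C_H(z)=\{t\mapsto t+\ell\}\cong(\mathbb{R},+)\). The only (harmless) difference is in the propagation step: the paper spreads the affine germ over all of \(\mathbb{R}\) directly via the relation \(g(t+k)=g(t)+k\), whereas you compare \(g\) with the honest translation \(h(t)=t+\ell\) and apply Corollary \ref{g=id} to \(h^{-1}g\) — a clean shortcut that makes your earlier circle/breakpoint digression unnecessary.
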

\begin{proof}
Let \(g\in C_{H}\left(z\right)\). Since $g \in H$ we have
	\(g\left(t\right)=a_{0}^{2}t+a_{0}b_{0}\), for some
	\(a_{0},b_{0}\in \mathbb{R}\), \(t\in\left(-\infty,L\right]\)
	and for suitable \(L\in\mathbb{R}\). The map $g$ commutes with the translation
	\(z\left(t\right)=t+k\), for some \(k<0\), so $g$ is periodic of period $|k|$
	and so, by Remark \ref{thm:affine-commute-translation}, we have \(a_{0}^{2}=1\). 
	Hence $g$ is a translation around $-\infty$ and it is periodic, so we
	must have that $g(t)=t+b_0$ for every $t \in \mathbb{R}$.
	Therefore, if $\varphi_z$ is the map of Lemma \ref{monod-centralizers}, we have
	\[
	C_{H}\left(z\right)\cong \varphi_z(C_{H}\left(z\right)) \cong \{(1,b_0) \mid b_0 \in \mathbb{R}\}\cong (\mathbb{R},+).
	\]
\end{proof}
The previous proposition implies the following result.
\begin{cor}\label{cor:centralizers-translations}
	Let \(y\in H\) be an element such that \(y=g^{-1}zg\), where 
	\(z,g\in H\) and \(z\) is a translation. Then 
	\(C_{H}\left(y\right)\cong \left(\mathbb{R},+\right)\).
\end{cor}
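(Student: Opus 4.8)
The plan is to reduce the statement immediately to Proposition \ref{centralizer-translations} via the elementary fact that conjugate elements have isomorphic centralizers. Concretely, since $y = g^{-1}zg$ with $g \in H$, one checks directly that $h \in C_H(y)$ if and only if $ghg^{-1} \in C_H(z)$, so that $C_H(y) = g^{-1}C_H(z)g$. Since conjugation by $g$ is an automorphism of $H$, its restriction is a group isomorphism $C_H(z) \to C_H(y)$, and hence $C_H(y) \cong C_H(z)$. All the content of the corollary is therefore concentrated in computing $C_H(z)$ for $z$ a translation.

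To do that, first observe that a nontrivial translation $z(t) = t+k$ (with $k \ne 0$) lies in $H^{<} \cup H^{>}$: if $k < 0$ then $z \in H^{<}$ and Proposition \ref{centralizer-translations} applies verbatim to give $C_H(z) \cong (\mathbb{R},+)$; if $k > 0$ then $z \in H^{>}$, so that $z^{-1} \in H^{<}$ is a translation and $C_H(z) = C_H(z^{-1}) \cong (\mathbb{R},+)$ again by Proposition \ref{centralizer-translations}. Combining this with the previous paragraph yields $C_H(y) \cong (\mathbb{R},+)$, as claimed.

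There is no genuine obstacle here: the corollary is a transport of structure along the conjugating element $g$, plus the one-line remark needed to pass between $H^{<}$ and $H^{>}$ so as to match the hypotheses of Proposition \ref{centralizer-translations}. The only point worth flagging is the degenerate case in which $z$ (and hence $y$) is the identity, which is excluded from the statement since then $C_H(y) = H \not\cong (\mathbb{R},+)$; this case is treated separately in the final classification of centralizers.
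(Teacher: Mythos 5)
Your proof is correct and is exactly the argument the paper leaves implicit (the paper offers no proof beyond ``the previous proposition implies the following result''): transport of centralizers along the conjugating element, plus passing to $z^{-1}$ to reduce $H^{>}$ to the $H^{<}$ case of Proposition \ref{centralizer-translations}. Your remark that the trivial translation must be excluded is a fair observation about an imprecision in the statement, handled correctly.
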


Let us now consider \(z\in H^{<}\) such that 
\(z'\left(-\infty\right)\neq z'\left(+\infty\right)\)
and \(z\) has breakpoints. 
We start with the following result.
\begin{lem}\label{the-goddamn-lemma-5.3-from-Kas-Mat}
	Given \(z\in H^{<}\) such that its initial and final 
	affinity boxes with respect to \(z\) and itself are 
	\(\left(-\infty,L\right]^{2}\) and
	\(\left[R,+\infty\right)^{2}\), respectively, and so that
	\(z'\left(-\infty\right)\neq z'\left(+\infty\right)\). 
	Let
	\(s\in\mathbb{Z}_{>0}\) be such that \(z^{s}\left(R\right)<L\). 
	Then either \(z^{-s}\) is not affine on
	\(\left[z^{s}\left(R\right),L\right]\) or \(z^{-2s}\) is not affine on \(\left[z^{2s}\left(R\right),L\right]\).
\end{lem}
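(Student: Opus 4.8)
\textbf{Proof plan for Lemma \ref{the-goddamn-lemma-5.3-from-Kas-Mat}.}

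The plan is to argue by contradiction: suppose that both $z^{-s}$ is affine on $[z^{s}(R),L]$ and $z^{-2s}$ is affine on $[z^{2s}(R),L]$, and derive a contradiction with the hypothesis $z'(-\infty)\neq z'(+\infty)$. The key point is that an affinity of $z^{-s}$ (equivalently $z^{s}$) on an interval that straddles the "transition zone" of $z$ forces the two affine germs of $z$ at $\pm\infty$ to be compatible in a way that pins down the slopes. Concretely, I would set $\alpha:=z'(+\infty)$ (so $z(t)=\alpha t+\beta$ for $t\ge R$, with $\alpha<1$ since $z\in H^{<}$) and $\gamma:=z'(-\infty)$ (so $z(t)=\gamma t+\delta$ for $t\le L$, with $\gamma>1$), and track how $z^{s}$ acts. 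Since $z^{s}(R)<L$, the orbit $R, z(R),\ldots, z^{s}(R)$ starts in the final box, eventually lands in the initial box, and the map $z^{s}$ restricted to $[R,z^{-s}(L)]$ (or rather $z^{-s}$ on $[z^{s}(R),L]$) is a composition of Möbius pieces; its germ at the right endpoint $L$ is governed by $\gamma$ and its germ at the left endpoint by $\alpha$, while its value on the whole interval, if affine, is a single Möbius (indeed affine, as a composition landing appropriately) map.

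The heart of the argument is the following: if $z^{-s}$ is affine on $[z^{s}(R),L]$, write $z^{-s}(t)=pt+q$ there. Looking near the right endpoint, for $t$ slightly less than $L$ we have $z^{-1}(t)=\gamma^{-1}(t-\delta)$ (the inverse of the initial germ), and iterating, $z^{-j}$ near $L$ behaves with slope $\gamma^{-j}$ until the orbit leaves the initial box; similarly, looking near the left endpoint $z^{s}(R)$, the point $z^{s}(R)$ is just below $L$ and $z^{-s}$ near there must eventually, after passing back up through the picture, have slope involving $\alpha^{-s}$ near $R$. I would make this precise by composing the germ computations: affinity of $z^{-s}$ on this interval, combined with affinity of $z$ itself on $(-\infty,L]$ and on $[R,+\infty)$, forces a relation of the form $\alpha^{a}\gamma^{b}=1$ for suitable nonzero integers $a,b$ (coming from how many iterates are spent in each box), i.e. $\alpha$ and $\gamma$ are multiplicatively dependent with a specific exponent pattern. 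Doing the same for $z^{-2s}$ on $[z^{2s}(R),L]$ gives a second such relation with different exponents (since doubling $s$ changes the number of iterates in each affine regime by a controlled amount), and the two relations together with $\alpha<1<\gamma$ force $\alpha=\gamma=1$, contradicting that $z$ has breakpoints and $z'(-\infty)\ne z'(+\infty)$ — or more directly, they force $z'(-\infty)=z'(+\infty)$.

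The main obstacle I anticipate is bookkeeping the "transition": the interval $[z^{s}(R),L]$ is not entirely inside either affine box, so I cannot simply read off a slope — I must argue that if $z^{-s}$ is globally affine on it, then by analyticity/rigidity of Möbius maps its behavior is determined by its germ at either endpoint, and those two germs are computed in genuinely different regimes. The cleanest route is probably to use that a piecewise-Möbius homeomorphism which is affine on an interval and whose germ at one end has slope $\gamma^{-j_1}\cdot(\text{something Möbius from the middle})$ and at the other end slope $\alpha^{-j_2}\cdot(\text{the same middle Möbius, read backwards})$ must have those consistent, and then comparing the $s$ and $2s$ cases eliminates the unknown "middle" contribution. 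I would lean on the uniqueness statements already available (Proposition \ref{monod-uniqueness} and the initial/final box Lemma \ref{monod-initialbox}) to control the middle piece, and reduce everything to an equality of real numbers that visibly cannot hold when $z'(-\infty)\neq z'(+\infty)$.
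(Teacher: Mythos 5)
Your overall strategy---argue by contradiction and compare the $s$ and $2s$ cases so as to cancel the unknown ``middle'' contribution, concluding $z'(-\infty)=z'(+\infty)$---is the right one and matches the paper. But the concrete mechanism you propose for extracting slope relations would fail, and the step that actually produces the cancellation is missing. First, if $z^{-s}$ is affine on $[z^{s}(R),L]$ it has a \emph{single} germ $(a,b)$ there; there is no separate ``germ at the right endpoint governed by $\gamma$'' versus ``germ at the left endpoint governed by $\alpha$''. Second, the assertion that for $t$ slightly less than $L$ one has $z^{-1}(t)=\gamma^{-1}(t-\delta)$ is false: $z$ is affine on $(-\infty,L]$, so $z^{-1}$ is affine only on $(-\infty,z(L)]$, and $z(L)<L$ because $z\in H^{<}$, so a left-neighborhood of $L$ lies in the gap $(z(L),L]$ where nothing is known about $z^{-1}$. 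For the same reason one cannot ``count iterates spent in each box'': the intermediate points of the orbit of $R$ under $z$ lie in the region between $L$ and $R$ where $z$ is not affine, so no relation $\alpha^{a}\gamma^{b}=1$ with controlled exponents comes out of orbit bookkeeping, and Proposition \ref{monod-uniqueness} and Lemma \ref{monod-initialbox} (which concern conjugators) do not control this middle piece either.

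The missing idea is to exploit the factorization $z^{-2s}=z^{-s}\circ z^{-s}$ and compute the (assumed single) germ $(c,d)$ of $z^{-2s}$ on $[z^{2s}(R),L]$ on two different subintervals. On $[z^{s}(R),L]$ the inner $z^{-s}$ has germ $(a,b)$ and sends this interval into $[R,+\infty)$, where the outer $z^{-s}$ is affine with germ $\bigl(a_n^{-2s},*\bigr)$; on $[z^{2s}(R),z^{s}(L)]$ the inner $z^{-s}$ is affine with its initial germ $\bigl(a_0^{-2s},*\bigr)$ and sends this interval into $[z^{s}(R),L]$, where the outer $z^{-s}$ again has germ $(a,b)$. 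Equating the two resulting expressions for $(c,d)$ gives $\bigl(a_n^{-2s},*\bigr)(a,b)=(a,b)\bigl(a_0^{-2s},*\bigr)$, and comparing first coordinates yields $a_n^{-2s}a=a\,a_0^{-2s}$, hence $a_0^{2}=a_n^{2}$, contradicting $z'(-\infty)\ne z'(+\infty)$. This is exactly the elimination of the ``middle'' you were after, but it requires the composition-on-subintervals argument rather than endpoint germs. (A smaller point: for $z\in H^{<}$ one only gets $z'(-\infty)\ge 1\ge z'(+\infty)$, not strict inequalities, so the contradiction to aim for is $z'(-\infty)=z'(+\infty)$ directly, not $\alpha=\gamma=1$ via $\alpha<1<\gamma$.)
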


\begin{proof}
	First of all, since $z\in H^{<}$, then $z^{-1}\in H^{>}$.
	Let us suppose that $z\left(t\right)=a_{0}^{2}t+a_{0}b_{0}$ on $(-\infty,L]$
	and $z\left(t\right)=a_{n}^{2}t+a_{n}b_{n}$ on $[R,+\infty)$. 
	Then by hypothesis, $a_{0}^{2}=z'\left(-\infty\right)\neq z'\left(+\infty\right)
	=a_{n}^{2}$.
	Moreover, 
	$z^{-1}\left(t\right)=a_{0}^{-2}t-a_{0}^{-1}b_{0}$ on 
	$(-\infty,z(L)]$ and 
	$z^{-1}\left(t\right)=a_{n}^{-2}t-a_{n}^{-1}b_{n}$ on
	$[z(R),+\infty)$. 
	Then since $z^{-1}\in H^{>}$,
	we have $z^{-s}$ is affine on $(-\infty,z^{s}(L)]$, 
	with initial germ
	\[(z^{-s})_{-\infty}=\left(a_{0}^{-2s},-\sum_{j=1}^{s}a_{0}^{-2j+1}b_{0}\right).\]
	\noindent Moreover, $z^{-s}$ is affine on
	$[z^{s}(R),+\infty)$, which contains $[R,+\infty)$
	with final germ
	\[(z^{-s})_{+\infty}=\left(a_{n}^{-2s},-\sum_{j=1}^{s}a_{n}^{-2j+1}b_{n}\right).\]
	Let us assume, by contradiction, that both $z^{-s}$ and 
	$z^{-2s}$ are both affine on $[z^{s}(R),L]$ and 
	$[z^{2s}(R),L]$, respectively, and that their 
	germs on these two intervals are $(a,b)$ and $(c,d)$, respectively.
	Since $z^{-2s}=z^{-s}\circ z^{-s}$, we get
	$z^{-2s}$ is affine on $[z^{s}(R),L]$,
	because $z^{-s}$ is affine on $[z^{s}(R),L]$
	by our assumption and $z^{-s}$ is affine on 
	$[R,z^{-s}(L)]\subset [R,+\infty)$, 
	with germ
	\[\left(a_{n}^{-2s},-\sum_{j=1}^{s}a_{n}^{-2j+1}b_{n}\right)(a,b).\]
	\noindent Moreover, $z^{-2s}$ is also affine on
	$[z^{2s}(R),z^{s}(L)]$, since $z^{-s}$ is affine on 
	$(-\infty,z^{s}(L)]$
	and on $[z^{s}(R),L]$ by our assumption, 
	with germ
	\[(a,b)\left(a_{0}^{-2s},-\sum_{j=1}^{s}a_{0}^{-2j+1}b_{0}\right).\]
	\noindent By comparing the germ $(c,d)$ of $z^{-2s}$ on $[z^{2s}(R),L]$ 
	with the germs of the same map $z^{-2s}$ on the two subintervals
	$[z^{s}(R),L]$ and $[z^{2s}(R),z^{s}(L)]$ of the interval $[z^{2s}(R),L]$,
	we get
	\[\left(a_{n}^{-2s},-\sum_{j=1}^{s}a_{n}^{-2j+1}b_{n}\right)(a,b)=(c,d)=(a,b)\left(a_{0}^{-2s},-\sum_{j=1}^{s}a_{0}^{-2j+1}b_{0}\right).\]
	From this, we must have
	\[a_{n}^{-2s}a=aa_{0}^{-2s}.\]
	Since the group $(\mathbb{R}_{>0},\cdot)$
	is abelian, we have $a_{0}^{-2s}=a_{n}^{-2s}$.
	However, we are considering $z\in H^{<}$
	such that the $a_{0}^2\neq a_{n}^2$, so that $a_{0}^{-2s} \ne a_{n}^{-2s}$
	and we have a contradiction. Therefore, either
	$z^{-s}$ is not affine on $[z^{s}(R),L]$ or
	$z^{-2s}$ is not affine on $[z^{2s}(R),L]$. 
\end{proof}

\begin{lem}\label{thm:unbounded-to-bounded}
	The group $H$ is isomorphic to the group $K$ of all piecewise M\"obius transformations of $[0,1]$ to itself with finitely many breakpoints.
	
\end{lem}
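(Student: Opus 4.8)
The plan is to realize $H$ as a conjugate of $K$ by a fixed auxiliary homeomorphism identifying the open interval $(0,1)$ with the real line in a way compatible with the Möbius structure near the endpoints. First I would record that, for $A=\mathbb{R}$, the set $\mathcal{P}_{\mathbb{R}}$ of fixed points of hyperbolic elements of $\mathrm{PSL}_2(\mathbb{R})$ is all of $\mathbb{R}\mathrm{P}^{1}$: the element $t\mapsto 4t$ (coming from $\mathrm{diag}(2,1/2)$) is hyperbolic and fixes $0$ and $\infty$, and conjugating it by arbitrary Möbius maps produces a hyperbolic element fixing any prescribed point. Consequently $H$ is exactly the group of orientation-preserving piecewise Möbius self-homeomorphisms of $\mathbb{R}\mathrm{P}^{1}$ that fix $\infty$ and have finitely many breakpoints (with no restriction on their location), the affine-germ condition at $\pm\infty$ being automatic since a Möbius map fixing $\infty$ is affine. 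Next I would fix an increasing piecewise Möbius homeomorphism $\psi\colon(0,1)\to\mathbb{R}$ with finitely many breakpoints; such a $\psi$ exists (one cannot take $\psi$ a single Möbius map, since a Möbius map has only one pole, but one may glue two Möbius pieces, e.g.\ $\psi(s)=\tfrac{2s-1}{2s}$ on $(0,\tfrac12]$ and $\psi(s)=\tfrac{2s-1}{2-2s}$ on $[\tfrac12,1)$). Since $\psi$ has finitely many breakpoints it coincides, near $0$, with a Möbius map $\mu_0$ satisfying $\mu_0(0)=\infty$, and near $1$ with a Möbius map $\mu_1$ satisfying $\mu_1(1)=\infty$.

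I would then define $\Psi\colon H\to K$ by $\Psi(f)=\psi^{-1}\circ f\circ\psi$ on $(0,1)$ (well-defined as a self-map of $(0,1)$ because $f$ fixes $\infty$, hence $f(\mathbb{R})=\mathbb{R}$), and check that this extends to an element of $K$. Away from the endpoints $\Psi(f)$ is piecewise Möbius with finitely many breakpoints, these breakpoints being the $\psi$-preimages of the breakpoints of $f$ together with the breakpoints of $\psi$, and on each resulting piece $\Psi(f)$ is a composition of Möbius maps. Near $s=0$: for $s$ small, $\psi(s)$ lies in the interval where $f$ is affine (its germ at $-\infty$) and $f(\psi(s))$ lies in the interval where $\psi^{-1}$ agrees with $\mu_0^{-1}$, so $\Psi(f)$ agrees there with $\mu_0^{-1}\circ(\text{affine})\circ\mu_0$, a Möbius map, and $\Psi(f)(s)\to\mu_0^{-1}(f(\infty))=\mu_0^{-1}(\infty)=0$; thus $\Psi(f)$ extends Möbius-ly over $0$ with value $0$, and symmetrically over $1$. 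Hence $\Psi(f)$ is an orientation-preserving piecewise Möbius homeomorphism of $[0,1]$ with finitely many breakpoints, i.e.\ $\Psi(f)\in K$. That $\Psi$ is a homomorphism is immediate since conjugation is; injectivity is immediate since $\Psi(f)=\mathrm{id}_{[0,1]}$ forces $f=\mathrm{id}$ on $\mathbb{R}$ and $f(\infty)=\infty$. For surjectivity, given $k\in K$ I would set $f:=\psi\circ(k|_{(0,1)})\circ\psi^{-1}$ on $\mathbb{R}$ and $f(\infty):=\infty$; the same local analysis shows $f$ is piecewise Möbius with finitely many breakpoints (all real, hence in $\mathcal{P}_{\mathbb{R}}$), that $f$ is continuous at $\infty$ and Möbius there, hence affine, so $f\in H$, and $\Psi(f)=k$.

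The routine parts are the gluing construction of $\psi$ and the bookkeeping of breakpoints. The step requiring the most care is the analysis at the endpoints: one must verify that the affine germs of $f$ at $\pm\infty$ translate, under conjugation by $\psi$, into genuine Möbius germs of $\Psi(f)$ at $0$ and $1$ (not merely continuous or piecewise Möbius behaviour), which is precisely why $\psi$ must be taken Möbius near each endpoint and why the hyperbolic-fixed-point set $\mathcal{P}_{\mathbb{R}}$ being everything is needed so that the breakpoint-location constraint on $H$ disappears.
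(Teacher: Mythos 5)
Your proof is correct and follows essentially the same route as the paper: both realize the isomorphism as conjugation by a fixed orientation-preserving piecewise M\"obius identification of $\mathbb{R}$ with the open unit interval that is a single M\"obius piece near each end (the paper glues three pieces, the outer two being $-1/t$ and $(t+2)/(t+3)$, with the middle piece supplied by the transitivity lemma; you glue two explicit pieces). Your additional observations --- that $\mathcal{P}_{\mathbb{R}}=\mathbb{R}\mathrm{P}^{1}$ so the breakpoint-location constraint is vacuous, and the explicit check that affine germs at $\pm\infty$ become M\"obius germs at the endpoints --- are correct and make explicit what the paper leaves as ``a direct calculation.''
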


\begin{proof}

We explicitly construct an isomorphism
$\nabla:H \to K$.
We use Lemma \ref{monod-transitive} to construct
	an element $h \in H$ such that $h(-3)=\frac{1}{3}$ and $h(-1)=\frac{1}{2}$. Now
	consider the map
	\[
	f\left(t\right)=\begin{dcases}
	\frac{-1}{t} & t \in \left(-\infty,-3\right] \\
	h\left(t\right) & t \in [-3,-1] \\
	\frac{t+2}{t+3} & t \in \left[-1,+\infty\right).
	\end{dcases}
	\]
We now define
	\[
	\nabla(g)(t)=\begin{dcases}
	fgf^{-1}(t) & t \in (0,1) \\
	t & t \in \{0,1\}
	\end{dcases}
	\]
	and notice that a direct calculation shows 
	$\mathrm{im}(\nabla) \subseteq K$. Thus the map
	$\nabla:H \to K$ is well-defined and it is
	clearly a group isomorphism with an obvious inverse.
\end{proof}

\begin{rem}\label{mobius-boxes}
    The isomorphism $\nabla$ of the proof of Lemma~\ref{thm:unbounded-to-bounded} switches $-\infty$ with $0$ and $+\infty$ with $1$ and
	allows us to study maps in Monod's group from a bounded point of view which will be useful in the proof of Lemma \ref{thm:generalized-KasMat-5.3}. Moreover, a straightforward calculation shows that, if $y,z \in H$ are such that $y_{-\infty}=z_{-\infty}$ and $y_{+\infty}=z_{+\infty}$,
	then the initial and final affinity boxes of $y$ and $z$ correspond to
	initial and final \emph{M\"obius boxes} of $\nabla(y)$ and $\nabla(z)$, where 
	the images coincide and are M\"obius and a conjugator has to be M\"obius.
\end{rem}

In the next result we will freely use the isomorphism $\nabla:H \to K$
of Lemma~\ref{thm:unbounded-to-bounded}.

\begin{lem}
	\label{thm:generalized-KasMat-5.3}
	Let \(z\in H^{<}\) be such that \(z\left(t\right)=a^2t+ab\) at 
	\(-\infty\) with \(a^2>1\). 
	Then there exists \(\varepsilon>0\) 
	such that the only \(g\in C_{H}\left(z\right)\) with	
	\(1-\varepsilon<\widetilde{g}~'(0)<1+\varepsilon\) and 
	\(-\varepsilon<\widetilde{g}~''(0)<\varepsilon\), where $\widetilde{g}=\nabla(g)$, is
	\(g=\mathrm{id}\).
\end{lem}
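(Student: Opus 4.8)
The plan is to reduce the statement to Lemma~\ref{thm:generalized-KasMat-5.3}'s analogue for the Stair Algorithm, using the fact that a centralizer element is a conjugator of $z$ with itself, and that its behaviour is determined by its initial germ $g_{-\infty}=(a_0^2, a_0 b_0)$, equivalently, after applying $\nabla$, by the first and second derivatives of $\widetilde{g}=\nabla(g)$ at $0$ (the M\"obius box at $0$ is pinned down by these two data by Remark~\ref{mobius-boxes}). First I would note that, since $z \in H^{<}$ with $z'(-\infty)=a^2>1$, by Lemma~\ref{thm:unbounded-to-bounded} and Remark~\ref{mobius-boxes} there are M\"obius boxes $[0,\delta_0]^2$ near $0$ and $[1-\delta_1,1]^2$ near $1$ on which $\widetilde z$ is M\"obius. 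I would then choose $\varepsilon$ small enough that the hypotheses $1-\varepsilon<\widetilde g\,'(0)<1+\varepsilon$ and $-\varepsilon<\widetilde g\,''(0)<\varepsilon$ force $\widetilde g$ (equivalently $g$) to be defined, and M\"obius, on a prescribed small box, with $\widetilde g([0,\delta_0]) \subseteq [0,\delta_0]$; translated back to $H$ via $\nabla^{-1}$, this says $g$ is affine on a box $(-\infty,L]^2$ with $g(L) \le L$, and moreover that $g_{-\infty}=(a_0^2,a_0b_0)$ lies within a controlled neighbourhood of the identity germ $(1,0)$ in $\mathrm{Aff}(\mathbb{R})$.

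Next I would combine two facts. First, since $g$ centralizes $z$, Lemma~\ref{monod-centralizers} says $g$ is determined by its initial germ $g_{-\infty}$, and $\varphi_z(g)=g_{-\infty} \in C_{\mathrm{Aff}(\mathbb{R})}(z_{-\infty})$; since $z_{-\infty}=(a^2,a b)$ with $a^2 \ne 1$, Lemma~\ref{thm:centralizer-Aff} identifies this centralizer with a copy of $(\mathbb{R},+)$, parametrised by $c>0$ via $(c, b(c-1)/(a^2-1))$. So the possible initial germs of centralizer elements form a one-parameter family, and our hypothesis pins $c$ (the first derivative-squared data) to a small interval around $1$. Second, and this is the crux, I would invoke Lemma~\ref{the-goddamn-lemma-5.3-from-Kas-Mat}: for $s$ with $z^s(R)<L$, at least one of $z^{-s}$, $z^{-2s}$ fails to be affine on the interval from $z^s(R)$ (resp. $z^{2s}(R)$) to $L$. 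This ``staircase obstruction'' is exactly what kills nontrivial conjugators whose initial germ is too close to $(1,0)$: if $g$ were such a conjugator with $g_{-\infty}$ close to but not equal to $(1,0)$, then running the Stair Algorithm (Theorem~\ref{monod-stair}) we would be forced, at some stage, to match a M\"obius piece of $z^{-ms}$ against an affine one, which is impossible since a nonconstant M\"obius map agrees with an affine map on at most an interval's worth of points only if it is itself affine.

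More precisely, the argument I would give runs as follows. Suppose $g \in C_H(z)$ satisfies the derivative bounds but $g \ne \mathrm{id}$, so $g_{-\infty}=(c, b(c-1)/(a^2-1)) \ne (1,0)$, i.e. $c \ne 1$. Because $z \in H^{<}$, iterating $z$ pushes points from a neighbourhood of $+\infty$ down past $L$, and conjugating $g$ by powers of $z$ we may analyse $g$ on the nested boxes $(-\infty, z^{-k}(L)]$. By the uniqueness statement in Proposition~\ref{monod-uniqueness} and the Identification Lemma~\ref{monod-idlemma}, the shape of $g$ on each such box is forced; comparing with Lemma~\ref{the-goddamn-lemma-5.3-from-Kas-Mat} we see that for $g$ to remain in $H$ (finitely many breakpoints, M\"obius pieces) while having $g_{-\infty}$ so close to the identity, we would need $z^{-s}$ and $z^{-2s}$ to both be affine on the relevant intervals, contradicting that lemma. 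Hence $c=1$, so $g_{-\infty}=(1,0)$, and then Corollary~\ref{g=id} gives $g=\mathrm{id}$. Finally, $\varepsilon$ is extracted by making the neighbourhood of $(1,0)$ in which this forcing applies; concretely it is governed by how far $z^s(R)$ sits below $L$ and the bounded geometry of the M\"obius boxes under $\nabla$.

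The main obstacle I anticipate is the bookkeeping in the last paragraph: translating ``$\widetilde g\,'(0)$ and $\widetilde g\,''(0)$ both small'' into an explicit neighbourhood of $(1,0)$ in $\mathrm{Aff}(\mathbb{R})$ small enough that the staircase obstruction bites, and verifying that the non-affinity of $z^{-s}$ or $z^{-2s}$ on a fixed interval genuinely propagates to a non-M\"obius (hence non-extendable) piece of the candidate conjugator produced by the Stair Algorithm. This is where one must be careful that ``affine'' versus ``M\"obius'' is the right dichotomy after applying $\nabla$ (Remark~\ref{mobius-boxes}), and that the breakpoint counting does not secretly allow an exotic $g$ to slip through.
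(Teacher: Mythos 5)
Your setup is on target: passing to $\widetilde g=\nabla(g)$, noting that $g$ is determined by its initial germ (Lemma \ref{monod-centralizers}), that the possible germs form a one-parameter family inside $C_{\mathrm{Aff}(\mathbb{R})}(z_{-\infty})$, and that Lemma \ref{the-goddamn-lemma-5.3-from-Kas-Mat} supplies the obstruction -- all of this matches the paper. But the crux of the argument is missing. You assert that a nontrivial $g$ with germ near $(1,0)$ would force the Stair Algorithm ``to match a M\"obius piece of $z^{-ms}$ against an affine one,'' but you never say how, and as stated your mechanism makes no essential use of the hypothesis that $\widetilde g\,'(0)$ and $\widetilde g\,''(0)$ are close to $1$ and $0$. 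That cannot be right: $z$ itself lies in $C_H(z)$ with $z_{-\infty}=(a^2,ab)\neq(1,0)$, so any argument that rules out \emph{all} nontrivial centralizers is proving too much. The whole content of the lemma is that the derivative bounds single out a small neighbourhood of the identity in which only $\mathrm{id}$ survives, and your sketch does not identify where that closeness enters.

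The paper's mechanism, which you would need to supply, is different from a Stair-Algorithm run. First, a direct computation shows that the derivative bounds force $\widetilde g$ to be uniformly within $3\varepsilon+2\varepsilon^2$ of the identity on the initial M\"obius box $[0,\alpha]$ (this is where $\varepsilon$ is actually used). Second, Lemma \ref{the-goddamn-lemma-5.3-from-Kas-Mat} is applied \emph{twice}, to two nested initial boxes, to produce two distinct breakpoints $\mu_2<\mu_1$ of suitable negative powers $\widetilde z^{-N_i}$ lying in a fixed window $I_\varepsilon$. Third -- and this is the step you are missing -- since $\widetilde g\widetilde z^{N_i}=\widetilde z^{N_i}\widetilde g$, these two maps have the same breakpoint set on $I_\varepsilon$; one computes that the breakpoints of the first are $B\ni\mu_1,\mu_2$ and those of the second are $\widetilde g^{-1}(B)$, and since $\widetilde g^{-1}$ is order-preserving and the two ordered finite sets coincide, $\widetilde g$ must fix every point of $B$, in particular $\mu_1$ and $\mu_2$. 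Now $\widetilde g$ is a single M\"obius transformation on $[0,\alpha]$ fixing the three points $0$, $\mu_1$, $\mu_2$, hence is the identity there; so $g_{-\infty}=(1,0)$ and Corollary \ref{g=id} gives $g=\mathrm{id}$. The uniform estimate in the first step is what guarantees that $\widetilde g^{-1}$ does not push the breakpoints out of the comparison window, which is the true role of $\varepsilon$ -- not, as you suggest, a measure of ``how far $z^s(R)$ sits below $L$.''
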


\begin{proof}
	Let us consider $\widetilde{z}$ to be conjugate version of $z$ from the proof of Lemma~\ref{thm:unbounded-to-bounded}, that is, $\widetilde{z}=\nabla(z)$.
	Let $[0,\alpha]$ and $[\beta,1]$ be, respectively, the
	initial and final M\"obius boxes of $\widetilde{z}$ (see Remark \ref{mobius-boxes})
	for suitable $0<\alpha<\beta<1$.
	By Lemma \ref{the-goddamn-lemma-5.3-from-Kas-Mat}, there exists an $N_1\in\mathbb{Z}_{>0}$
	such that $\widetilde{z}^{-N_1}$ has a breakpoint $\mu_1$ on $[\widetilde{z}^{N_1}\left(\beta\right),\alpha]$. We now consider a real number
	$\alpha'$ such that
	$0<\alpha'<\mu_1<\alpha$ and we take a new initial (and smaller) M\"obius box $[0,\alpha']$ for $z$,
	we use Lemma \ref{the-goddamn-lemma-5.3-from-Kas-Mat} again and find that there exists
	$N_2\in\mathbb{Z}_{>0}$
	such that $\widetilde{z}^{-N_2}$ has a breakpoint $\mu_2$ on $[\widetilde{z}^{N_2}\left(\beta\right),\alpha']$. Without loss of generality, assume that $\widetilde{z}^{N_2}\left(\beta\right)\le
	\widetilde{z}^{N_1}\left(\beta\right)$.
	Then there exists $\varepsilon>0$ such that 
	$\{\mu_2<\mu_1 \} \subseteq I_{\varepsilon}\coloneqq \left[\widetilde{z}^{N_2}\left(\dfrac{\beta+\varepsilon}{1+\varepsilon}\right),(1-\varepsilon)\alpha\right]$.
	
	\begin{claim}\label{ze-plane-ze-plane}
		Let $0 < \varepsilon <\frac{1}{3}$ and $g\in C_{H}\left(z\right)$ such that
		$$1-\varepsilon<\widetilde{g}~'(0)<1+\varepsilon\;\;\textrm{and}\;\; -\varepsilon<\widetilde{g}~''(0)<\varepsilon.$$
		Then $|\widetilde{g}(t)-\mathrm{id}(t)|<3\varepsilon+2\varepsilon^2$, for all $t \in [0,\alpha]$, so
		the family of functions $\widetilde{g}$ can be seen as uniformly converging to the identity function $\mathrm{id}$ on the interval $[0,\alpha]$.
	\end{claim}
	\begin{proof}[Proof of Claim \ref{ze-plane-ze-plane}]
		Let us consider \(\widetilde{g}=\nabla\left(g\right)\) so 
		that \(\widetilde{g}\left(t\right)=\dfrac{at+b}{ct+d}\) on 
		\(\left[0,\alpha\right]\), where \(ad-bc=1\). Then
		\(\widetilde{g}\left(0\right)=0\) and, consequently, \(b=0\) and \(ad=1\). 
		Let us define 
		\(\widetilde{g}'\left(0\right)\coloneqq\lambda\) and 
		\(\widetilde{g}''\left(0\right)=\rho\). Since
		\[\widetilde{g}'\left(t\right)=\dfrac{1}{\left(ct+d\right)^{2}}\;\;\textrm{and}\;\;\widetilde{g}''\left(t\right)=-\dfrac{2c}{\left(ct+d\right)^{3}},\]	
		we have \(\lambda=\dfrac{1}{d^2}\) and \(\rho=-\dfrac{2c}{d^3}\). Therefore, 
		\(d^2=\dfrac{1}{\lambda}\) and \(c=\dfrac{-\rho d^3}{2}\). Observe that
		\[
		\widetilde{g}\left(t\right)=\dfrac{at}{ct+d}=\dfrac{t}{cdt+d^2}=\dfrac{t}{\frac{-\rho t}{2 \lambda^2}+\frac{1}{\lambda}}=
		\dfrac{2\lambda^2 t}{-\rho t + 2\lambda}
		\]
		and so
		\begin{align*}
			|\widetilde{g}\left(t\right)-\mathrm{id}\left(t\right)|&=\left|
			\dfrac{2\lambda^2 t}{-\rho t + 2\lambda}-t
			\right|\\
			&=\left|\frac{2\lambda^2 t - 2\lambda t+\rho t^2}{-\rho t + 2\lambda}\right|\\
			&\le \left|2\lambda^2 t - 2\lambda t+\rho t^2\right|\\
			&\le 2\left|\lambda\right|\cdot\left|\lambda-1\right|\cdot\left|t\right|+\left|\rho\right|\cdot\left|t\right|\\
			&\le 2\left(1+\varepsilon\right)\varepsilon+\varepsilon\\
			&\le 3\varepsilon + 2\varepsilon^2
		\end{align*}
		where at the various steps we have observed that \(\left|t\right|\le1\),
		\(\left|\lambda\right|\le1+\varepsilon\),
		\(\left|\lambda -1\right|\le\varepsilon\) and, since 
		$|\rho|<\varepsilon<\frac{1}{3}$, we have
		\[
		|-\rho t+2\lambda | \ge |2\lambda-|\rho t|| \ge |2\lambda - \varepsilon|
		\ge |2(1-\varepsilon) - \varepsilon|=|2 - 3\varepsilon| \ge 1.
		\]
	\end{proof}
	\begin{claim}\label{again-ze-plane}
        Let $t_0 \in (0,\alpha)$. Then for any
        $1-\varepsilon<\widetilde{g}'(0)=\lambda <1+\varepsilon$,
		there is at most one $g \in C_H(z)$ such that $-\varepsilon<\widetilde{g}''(0)=\rho<\varepsilon$ and such that
		$\widetilde{g}^{-1}(t_0)=t_0$.
	\end{claim}

	\begin{proof}[Proof of Claim \ref{again-ze-plane}]
		We write $\widetilde{g}$ on the open interval $(0,\alpha)$ using the expression that was computed in the proof of Claim \ref{ze-plane-ze-plane}. Assume that $\widetilde{g}(t_0)=t_0$, then
		\[
		t_0=\frac{2\lambda^2t_0}{-\rho t_0 + 2\lambda}
		\]
		and so
		\[
		1=\frac{2\lambda^2}{-\rho t_0 + 2\lambda}
		\]
		and so
		\[
		-\rho t_0 + 2\lambda=2\lambda^2
		\]
		and so
		\[
		\rho=\frac{2\lambda-2\lambda^2}{t_0}
		\]
		If we assume that $\lambda=1+\tau$ for $-\varepsilon \le \tau \le \varepsilon$, then
		\[
		\rho=\frac{2(1+\tau)-2(1+\tau)^2}{t_0}=\frac{-2\tau-2\tau^2}{t_0}.
		\]
		For any $-\varepsilon \le \tau \le \varepsilon$, the expression above returns a unique
		$\rho$. In case such expression returns $|\rho| \ge \varepsilon$, then $g$ cannot exist. On the other hand, if such expression returns $|\rho|< \varepsilon$, then the pair $(\tau,\rho)$ satisfies the required conditions. Therefore, for each $\lambda$ and we obtain at most one $g$
		satisying the requirements.
	\end{proof}

	\noindent \emph{End of the Proof of Lemma \ref{thm:generalized-KasMat-5.3}.}
	Since we know that
	\begin{itemize}
	    \item[(i)] \(\mu_{i}\) is a breakpoint for \(\widetilde{z}^{-N_{i}}\),
	    \item[(ii)] \(\widetilde{z}^{-N_{i}}\left(\mu_{i}\right)\in\left[0,\alpha\right]\), and
	    \item[(iii)] \(\widetilde{g}\) is a M\"obius transformation on \(\left[0,\alpha\right]\),
	\end{itemize}
	
    \noindent it follows that \(\mu_{i}\) is a
	breakpoint for \(\widetilde{g}\widetilde{z}^{-N_{i}}\).
	On the other hand, when we consider 
	\(\widetilde{z}^{-N_i}\widetilde{g}\), the map \(\widetilde{g}\) 
	pushes the breakpoint \(\mu_i\) of \(\widetilde{z}^{-N_i}\) to 
	\(\widetilde{g}^{-1}(\mu_i)\), then
	\(\widetilde{g}^{-1}\left(\mu_{i}\right)\) is a breakpoint for
	\(\widetilde{z}^{-N_i}\widetilde{g}\).

	By construction, the set of breakpoints of $\widetilde{g}\widetilde{z}^{N_i}$ on $I_\varepsilon$
is $B:=\{\delta_1< \ldots < \delta_k \} \supseteq \{\mu_1<\mu_2\}$
and the set of breakpoints of $\widetilde{z}^{N_i}\widetilde{g}$ on $\widetilde{g}^{-1}(I_\varepsilon)$ is
$\widetilde{g}^{-1}(B)=\{\widetilde{g}^{-1}(\delta_1)< \ldots < \widetilde{g}^{-1}(\delta_k) \}
\{\widetilde{g}^{-1}(\mu_1)<\widetilde{g}^{-1}(\mu_2)\}$.
However, since \(g\in C_{H}\left(z\right)\), 
then 
	\(\widetilde{g}\widetilde{z}^{N_i}\left(t\right)=\widetilde{z}^{N_i}\widetilde{g}\left(t\right)\), 
	for every \(t\in I_{\varepsilon}\) and so
	\(\widetilde{g}^{-1}\left(\delta_i\right)=\delta_i\) for \(i=1,\ldots,k\) and
	in particular \(\widetilde{g}^{-1}\left(\mu_i\right)=\mu_i\) for \(i=1,2\).
	
	By Claim \ref{again-ze-plane}, there can exist at most one $\mathrm{id}\ne g \in C_H(z)$ 
	fixing $\mu_1$ and, since $\widetilde{g}$ fixes $0$ too, it cannot also
	fix $\mu_2$, otherwise \(g\) would be the identity map, by
	\cite[Corollary 2.5.3]{JonSin1987}. Similarly, there can exist at most one
	$\mathrm{id}\ne g \in C_H(z)$ fixing $\mu_2$ and such map cannot fix $\mu_1$ too. 
	Then the only way to avoid a contradiction and have a \(g \in 
	C_{H}\left(z\right)\)
	such that \(\widetilde{g}'\left(0\right)\) and
	\(\widetilde{g}''\left(0\right)\) satisfy
	the given conditions with respect to the chosen \(\varepsilon>0\) 
	is that \(g=\mathrm{id}\).
\end{proof}

We now show that in many cases centralizers are infinite cyclic.

\begin{prop}\label{monod-centralizer-breakpoints-affine}
	Let \(z\in H^{<}\) be such that \(z\left(t\right)=a^2t+ab\) around 
	\(-\infty\) and \(a^2>1\). Then \(C_{H}\left(z\right)\) is a
	discrete subgroup of \(\left(\mathbb{R},+\right)\) and so it is 
	isomorphic to \((\mathbb{Z},+)\).
\end{prop}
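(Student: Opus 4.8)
The plan is to push $C_H(z)$ through the monomorphism $\varphi_z$ of Lemma \ref{monod-centralizers}, show that its image is trapped inside the one-parameter group $C_{\mathrm{Aff}(\mathbb{R})}(a^2,ab)\cong(\mathbb{R},+)$, and then use Lemma \ref{thm:generalized-KasMat-5.3} to prove that the identity is an isolated point of this image. A subgroup of $(\mathbb{R},+)$ in which the identity is isolated is discrete, and since this one will also be infinite it must be infinite cyclic.

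First I would record that $z\in H^{<}$ has $z_{-\infty}=(a^2,ab)$ with $a^2>1$, so by Lemma \ref{monod-germsconj} every $g\in C_H(z)$ satisfies $g_{-\infty}\in C_{\mathrm{Aff}(\mathbb{R})}(a^2,ab)$, which by the computation of Subsection \ref{subsec:centralizers-affine-group} equals $\{(c,\,ab(c-1)/(a^2-1))\mid c\in\mathbb{R}_{>0}\}$ and is isomorphic to $(\mathbb{R},+)$ via $c\mapsto\log c$ (Lemma \ref{thm:centralizer-Aff}). Hence $\varphi_z$ embeds $C_H(z)$ as a subgroup $S$ of $(\mathbb{R},+)$. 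Next I would transfer to the bounded picture through the isomorphism $\nabla\colon H\to K$ of Lemma \ref{thm:unbounded-to-bounded}: writing $\widetilde{g}=\nabla(g)$ and $g_{-\infty}=(a_0^2,a_0b_0)$, a direct computation with the explicit map $f$ gives $\widetilde{g}(s)=s/(a_0^2-a_0b_0s)$ near $0$, so that $\widetilde{g}(0)=0$, $\widetilde{g}'(0)=a_0^{-2}$ and $\widetilde{g}''(0)=2a_0^{-3}b_0$. Because $g_{-\infty}\in C_{\mathrm{Aff}(\mathbb{R})}(a^2,ab)$ we have $a_0b_0=ab(a_0^2-1)/(a^2-1)$, so that both $\widetilde{g}'(0)=1/a_0^2$ and $\widetilde{g}''(0)=2ab(a_0^2-1)/\big(a_0^4(a^2-1)\big)$ are continuous functions of the single parameter $c=a_0^2$, with limits $1$ and $0$ respectively as $c\to 1$.

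Now I would invoke Lemma \ref{thm:generalized-KasMat-5.3}, which yields an $\varepsilon>0$ such that any $g\in C_H(z)$ with $\widetilde{g}'(0)\in(1-\varepsilon,1+\varepsilon)$ and $\widetilde{g}''(0)\in(-\varepsilon,\varepsilon)$ equals $\mathrm{id}$. By the continuity just observed there is a $\delta>0$ with the property that $|a_0^2-1|<\delta$ forces both of these inclusions; hence any $g\in C_H(z)$ with $|a_0^2-1|<\delta$ is the identity, i.e.\ $g_{-\infty}=(1,0)$. Reading this back through $\varphi_z$ and $c\mapsto\log c$, it says precisely that $0$ is an isolated point of $S$, so $S$ is a discrete subgroup of $(\mathbb{R},+)$. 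Finally, $z\neq\mathrm{id}$ and $H$ is torsion-free by Lemma \ref{thm:torsion-free}, so $\langle z\rangle\cong(\mathbb{Z},+)$ sits inside $C_H(z)$ and $S\neq\{0\}$; a nontrivial discrete subgroup of $(\mathbb{R},+)$ is infinite cyclic, whence $C_H(z)\cong S\cong(\mathbb{Z},+)$.

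The serious input is entirely contained in Lemma \ref{thm:generalized-KasMat-5.3}; the rest is bookkeeping, and the one point that needs care is the reduction to a single parameter: the relation $a_0b_0=ab(a_0^2-1)/(a^2-1)$ coming from ``$g_{-\infty}$ commutes with $z_{-\infty}$'' makes $\widetilde{g}''(0)$ a function of $\widetilde{g}'(0)$, so that controlling the first derivative automatically controls the second and Lemma \ref{thm:generalized-KasMat-5.3} becomes applicable. Without this observation the two derivative conditions would be independent and the lemma could not be used directly.
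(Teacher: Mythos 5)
Your proposal is correct and follows the same route as the paper: embed $C_H(z)$ into $C_{\mathrm{Aff}(\mathbb{R})}(a^2,ab)\cong(\mathbb{R},+)$ via $\varphi_z$, use Lemma \ref{thm:generalized-KasMat-5.3} to isolate the identity, and invoke the discrete-or-dense dichotomy for subgroups of $(\mathbb{R},+)$. In fact you make explicit a step the paper's two-line proof leaves implicit, namely that the commutation relation $a_0b_0=ab(a_0^2-1)/(a^2-1)$ forces $\widetilde{g}''(0)$ to be a continuous function of $\widetilde{g}'(0)$ tending to $0$, which is exactly what lets the two-derivative hypothesis of Lemma \ref{thm:generalized-KasMat-5.3} translate into discreteness of a one-parameter image.
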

\begin{proof}
	By Lemma \ref{thm:generalized-KasMat-5.3}, the subgroup
	\(C_{H}\left(z\right)\) is a discrete set. Since 
	\(C_{H}\left(z\right)\cong\varphi_z\left(C_{H}\left(z\right)\right)\le C_{\mathrm{Aff}\left(\mathbb{R}\right)}\left(z\right)\cong \left(\mathbb{R},+\right)\)
	and the subgroups of \(\left(\mathbb{R},+\right)\) are either 
	discrete (then isomorphic to \(\left(\mathbb{Z},+\right)\)), 
	or dense we get
	\(C_{H}\left(z\right)\cong\left(\mathbb{Z},+\right)\).
\end{proof}
\subsubsection{Mather invariant and centralizers}
\label{subsec:mather-centralizer}
	As done is Section \ref{sec:mather}, we consider 
	\(z\in H^{>}\) that is a translation around $\pm \infty$ and
	we use
	the Mather invariant of \(z\) in order to understand centralizers.
	
	\begin{prop}\label{thm:Centralizer-Last-Case}
		Consider \(z\in H^{>}\) such that 
		\(z\left(t\right)=t+b_{0}\) for \(t\in\left(-\infty,L\right]\) and
		\(z\left(t\right)=t+b_{1}\) for \(t\in\left[R,+\infty\right)\).
		Then either
		\(C_{H}\left(z\right)\cong\left(\mathbb{Z},+\right)\) or 
		\(C_{H}\left(z\right)\cong\left(\mathbb{R},+\right)\).
	\end{prop}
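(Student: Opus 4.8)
The plan is to identify $C_H(z)$ with a subgroup $\Lambda\leq(\mathbb{R},+)$ via initial germs and then to use the Mather invariant of $z$ to show that $\Lambda$ cannot be a dense \emph{proper} subgroup of $\mathbb{R}$. Since conjugation by the straightening map $s\in H$ of Section~\ref{sec:mather} is an automorphism of $H$, we have $C_H(z)\cong C_H(\overline{z})$ with $\overline{z}=szs^{-1}$ a translation by $1$ near $\pm\infty$, so I may as well assume $b_0=b_1=1$ throughout.

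First I would set up the embedding. As $z\in H^{>}$ is a translation near $\pm\infty$ and $C_H(z)=C_H(z^{-1})$ with $z^{-1}\in H^{<}$, every $g\in C_H(z)$ is affine near $\pm\infty$ and commutes there with a translation, so Remark~\ref{thm:affine-commute-translation} (as in the proof of Proposition~\ref{centralizer-translations}) forces its germs to be translations, $g_{-\infty}=(1,\ell_g)$ and $g_{+\infty}=(1,m_g)$. Applying Lemma~\ref{monod-centralizers} to $z^{-1}$, the map $g\mapsto g_{-\infty}$ embeds $C_H(z)$ into $\mathrm{Aff}(\mathbb{R})$ with image in the translation subgroup, so $C_H(z)\cong\Lambda:=\{\ell_g\mid g\in C_H(z)\}\leq(\mathbb{R},+)$. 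Since $z^{k}\in C_H(z)$ has initial germ $(1,k)$, we get $\mathbb{Z}\subseteq\Lambda$; in particular $\Lambda\neq 0$ and $\Lambda$ is saturated modulo $\mathbb{Z}$. A nonzero subgroup of $(\mathbb{R},+)$ is either discrete (hence $\cong(\mathbb{Z},+)$) or dense, so it suffices to show that a dense $\Lambda$ equals $\mathbb{R}$.

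Next I would bring in the Mather invariant $\phi:=\overline{z}^{\infty}\colon C_0\to C_1$ and its symmetry group
\[
\mathcal{S}:=\{\theta\in\mathbb{R}/\mathbb{Z}\mid \exists\,\psi\in\mathbb{R}/\mathbb{Z}\ \text{with}\ v_{1,\psi}\circ\phi=\phi\circ v_{0,\theta}\}.
\]
Composing and inverting the defining relations shows $\mathcal{S}$ is a subgroup of $\mathbb{R}/\mathbb{Z}$, and it is closed, since if $\theta_k\to\theta$ with witnesses $\psi_k$, compactness of the circle gives a subsequence $\psi_k\to\psi$ and continuity of $\phi$ yields $v_{1,\psi}\phi=\phi v_{0,\theta}$; hence $\mathcal{S}$ is finite cyclic or all of $\mathbb{R}/\mathbb{Z}$. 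I would then match $\Lambda$ with $\mathcal{S}$ via reduction modulo $\mathbb{Z}$: the forward direction (equation~(\ref{rot}) with $y=z$, which attaches to each $g\in C_H(z)$ rotations $v_{0,\ell_g},v_{1,m_g}$ satisfying the relation defining $\mathcal{S}$) gives $\Lambda\bmod\mathbb{Z}\subseteq\mathcal{S}$, while the converse direction in the proof of Theorem~\ref{matconj} with $y=z$ (which, starting from any $\theta\in\mathcal{S}$ and any chosen lift, builds a genuine $g\in C_H(z)$ whose initial germ realizes that lift) gives $\Lambda\bmod\mathbb{Z}=\mathcal{S}$. The kernel of the reduction is $\{g\mid\ell_g\in\mathbb{Z}\}=\langle z\rangle$, since if $\ell_g=k$ then $gz^{-k}\in C_H(z)$ has initial germ $(1,0)$, hence $gz^{-k}=\mathrm{id}$ by Corollary~\ref{g=id}. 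Therefore $\Lambda/\mathbb{Z}\cong C_H(z)/\langle z\rangle\cong\mathcal{S}$. If $\mathcal{S}$ is finite of order $n$, then $\Lambda$ is a nonzero subgroup of $\tfrac{1}{n}\mathbb{Z}$, so $C_H(z)\cong(\mathbb{Z},+)$; if $\mathcal{S}=\mathbb{R}/\mathbb{Z}$, then the $\mathbb{Z}$-saturated group $\Lambda$ surjects onto $\mathbb{R}/\mathbb{Z}$, hence $\Lambda=\mathbb{R}$ and $C_H(z)\cong(\mathbb{R},+)$.

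The main obstacle, I expect, is the surjectivity $\Lambda\bmod\mathbb{Z}=\mathcal{S}$: one must know that every pair of rotations solving the defining relation of $\mathcal{S}$ is realized by an \emph{honest} element of $C_H(z)$ with the prescribed germ, which uses the explicit construction from the proof of Theorem~\ref{matconj} (and its interplay with the Stair Algorithm, Theorem~\ref{monod-stair}), not merely its statement. A routine but necessary secondary point is verifying that a closed subgroup of $\mathbb{R}/\mathbb{Z}$ is finite cyclic or the full circle, and keeping track of how the rotation numbers of the induced circle maps correspond to the initial germs of centralizer elements (together with the normalizing constants coming from the straightening map $s$, which is why I prefer to reduce to $b_0=b_1=1$ at the outset).
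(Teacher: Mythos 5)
Your proof is correct, and it follows the paper's overall strategy: embed $C_H(z)$ into the translations via initial germs (Lemma \ref{monod-centralizers} applied to $z^{-1}$, plus Remark \ref{thm:affine-commute-translation}), and then control the image using the Mather invariant and equation (\ref{rot}) with $y=z$. Where you genuinely diverge is in how the dichotomy $(\mathbb{Z},+)$ versus $(\mathbb{R},+)$ is extracted. The paper lifts $z^{\infty}$ to a map $Z$ of $\mathbb{R}$ satisfying $Z(t+\ell)=Z(t)+m$ and argues by cases on whether $Z$ has breakpoints: if not, $Z$ is affine and every $\ell$ works; if so, the admissible $\ell$'s must permute the finitely many breakpoints modulo $1$ and hence form a discrete subgroup. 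You instead package the admissible rotations into the symmetry group $\mathcal{S}\leq\mathbb{R}/\mathbb{Z}$ of $z^{\infty}$, observe it is closed by a compactness-and-continuity argument, and invoke the classification of closed subgroups of the circle. Your route buys uniformity: it avoids the paper's slightly underjustified claim that breakpoints force discreteness, handling both cases at once, at the cost of the extra (routine) closedness verification. Both arguments share the same essential burden, which you correctly flag: the surjectivity $\Lambda\bmod\mathbb{Z}=\mathcal{S}$ requires the constructive converse direction of Theorem \ref{matconj}, i.e.\ that every admissible pair of rotations is realized by an honest element of $C_H(z)$ with the prescribed germ; this is implicit in the paper's phrase ``a $g\in H$ is a centralizer of $z$ if and only if equation (\ref{cent}) is satisfied'' and is needed in both write-ups. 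Your identification of the kernel of the reduction with $\langle z\rangle$ via Corollary \ref{g=id} is a correct extra detail the paper leaves tacit.
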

	\begin{proof}
		We follow notations from Section \ref{sec:mather}. Let 
		$N \in \mathbb{Z}_{>0}$ large enough so that
		\(z^{N}\left(\left(z^{-1}\left(L\right),L\right)\right)\subset\left(R,+\infty\right)\). 
		Up to conjugating \(z\) with \(s\), we will work with 
		\(z\left(t\right)=t+1\). We define the relation	\(t\sim t+1\) 
		and construct the circles 
		\(C_{0}\coloneqq\left(-\infty,0\right]/\sim\) and 
		\(C_{1}\coloneqq\left[N-1,+\infty\right)/\sim\).
		By Theorem \ref{matconj}, a \(g\in H\) is a centralizer of \(z\)
		if and only if the following equation is satisfied 
		\begin{align}\label{cent}
		z^{\infty}v_{0,\ell}=v_{1,m}z^{\infty}.
		\end{align}
		We now consider the map $V_{0}\colon\mathbb{R}\longrightarrow\mathbb{R}$
		defined by \(V_0(t)=t+\ell\), which is a lift of of $v_{0,\ell}$, that is, 
		it makes the the following diagram commute
		\[\xymatrix{\ar @{} [dr] |{\circlearrowright}
			\mathbb{R} \ar[d]_-{p_{0}} \ar[r]^-{V_{0}} & \mathbb{R} \ar[d]^-{p_{0}} \\
			C_{0} \ar[r]_{v_{0,\ell}} & C_{0}
		}\]
		Similarly \(V_{1}(t)=t+m\) makes the following diagram commute
		\[\xymatrix{\ar @{} [dr] |{\circlearrowright}
			\mathbb{R} \ar[d]_-{p_{1}} \ar[r]^-{V_{1}} & \mathbb{R} \ar[d]^-{p_{1}} \\
			C_{1} \ar[r]_{v_{1,m}} & C_{1}
		}\]
		Let $Z:\mathbb{R}\to\mathbb{R}$ be a lift of \(z^{\infty}\). 
		The previous two commutative diagrams and equation (\ref{cent}) 
		form three faces of a commutative cube analogous to that appearing in the proof of Theorem \ref{matconj} and so they imply that
		\(ZV_{0}=V_{1}Z\). In other words, for \(t\in\mathbb{R}\), we have
		\begin{align}\label{cent-lifting}
		Z(t+\ell)=ZV_{0}\left(t\right)=V_{1}Z\left(t\right)=Z\left(t\right)+m,
		\end{align}
		which means that the graph of \(Z\)	is shifted back to itself.
		If the lift of \(z^{\infty}\) does not have 
		breakpoints, the graph of \(Z\) is affine. Thus, there are 
		infinitely many pairs \(\ell,m\in\mathbb{R}\) for which the 
		graph can be shifted back to itself and so, for each 
		\(\ell\in\mathbb{R}\), there exists an \(m\in\mathbb{R}\)
		so that equation (\ref{cent-lifting}) holds. Consequently, the 
		image of the map \(\varphi_{z}\) from Lemma
		\ref{monod-centralizers} is so that
		\(\varphi_{z}\left(C_{H}\left(z\right)\right)\cong\left(\mathbb{R},+\right)\).
		Otherwise, the lift of \(z^{\infty}\) has 
		breakpoints and the set of candidates for \(\ell\) forms 
		a discrete subgroup of \(\left(\mathbb{R},+\right)\). Then
		\(\varphi_{z}\left(C_{H}\left(z\right)\right)\cong \left(\mathbb{Z},+\right)\).
		Therefore we have either	
		\(C_{H}\left(z\right)\cong\left(\mathbb{Z},+\right)\)
		or \(C_{H}\left(z\right)\cong\left(\mathbb{R},+\right)\).
	\end{proof}
	We see two examples: in the first one, the subgroup of centralizers is isomorphic 
	to $\left(\mathbb{R},+\right)$, while in the second it is isomorphic to
	$\left(\mathbb{Z},+\right)$.
	\begin{ex}\label{Ex-Translation-Conjugated}
		If we conjugate \(y\left(t\right)=t+1\) by 
		\[g\left(t\right)=\begin{dcases}
		\dfrac{t-2}{\frac{3}{2}t-2},&\textrm{if}\;t\in[0,1],\\
		t+1,&\textrm{otherwise}.
		\end{dcases}
		\]
		\noindent we get
		\begin{equation*}
			z\left(t\right)=\begin{dcases}
			\dfrac{2t+2}{\frac{3}{2}t+2},&\textrm{if}\;t\in[-1, 0],\\
			\dfrac{t-2}{\frac{3}{2}t-2},&\textrm{if}\;t\in[0,1],\\
			t+1,&\textrm{otherwise.}
			\end{dcases}
		\end{equation*}
		Then \(C_{H}\left(z\right)\cong \left(\mathbb{R},+\right)\), by Corollary \ref{cor:centralizers-translations}.
		\begin{figure}[!ht]
			\centering
			\includegraphics[scale=.4]{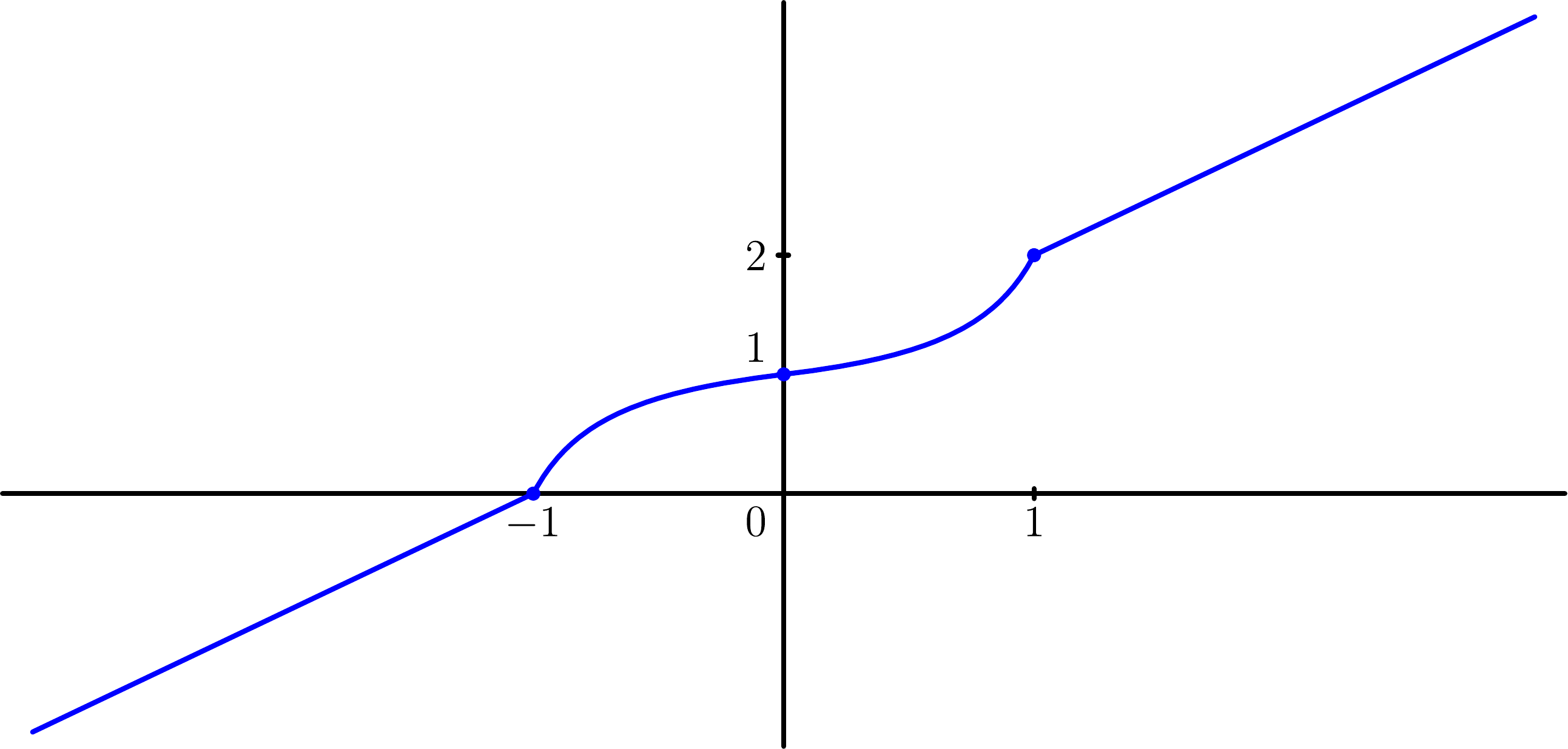}
			\caption{Graph of \(z\), from Example \ref{Ex-Translation-Conjugated}.}
			\label{GraphEx5}
		\end{figure}
	\end{ex}
	\begin{ex}\label{Ex-Discrete}
		Let us consider
			\[z\left(t\right)=\begin{dcases}
			\dfrac{t-2}{\frac{3}{2}t-2},&\;\textrm{if}\;t\in[0,1];\\
			t+1,&\;\textrm{otherwise}.
			\end{dcases}\]
		Notice that \(z\in H^{>}\) and that \(L=0\) and \(R=1\). 
		\begin{figure}[!ht]
			\centering
			\includegraphics[scale=.4]{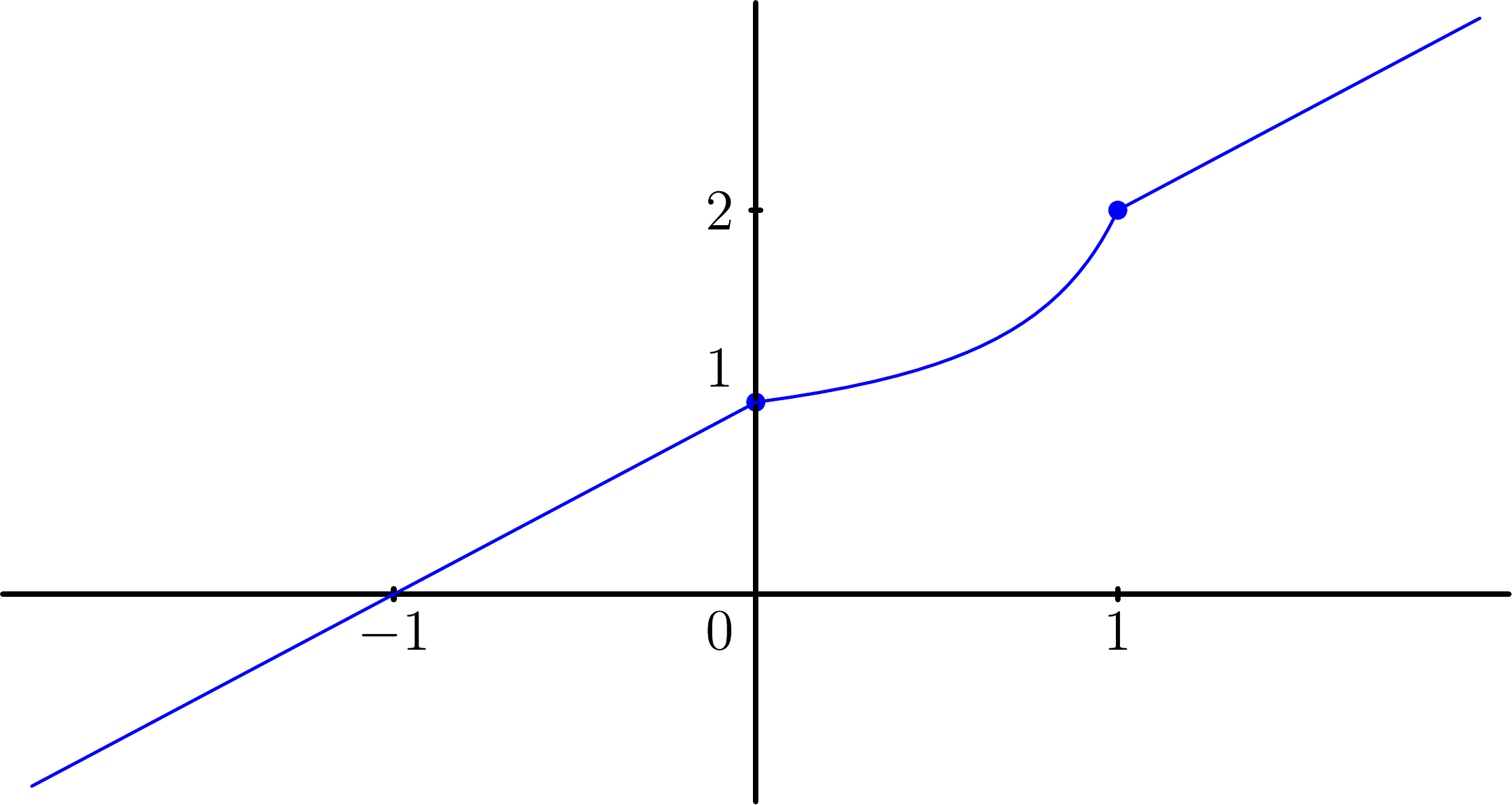}
			\caption{Graph of \(z\), from Example \ref{Ex-Discrete}.}
		\end{figure}
		\noindent Its inverse is given by
			\[z^{-1}\left(t\right)=\begin{dcases}
			\dfrac{2t-2}{\frac{3}{2}t-1},&\;\textrm{if}\;t\in[1,2];\\
			t-1,&\;\textrm{otherwise}.
			\end{dcases}\]	
		If \(N=2\), we have
		\[z^{2}((z^{-1}(0),0))\subset \left[1,+\infty\right).\]
		Notice that we do not need to conjugate by the map \(s\), since 
		it is a translation by one around $\pm \infty$, that is \(\overline{z}\coloneqq z\). 
		Moreover,
			\[z^{2}\left(t\right)=\begin{dcases}
			\dfrac{t-1}{\frac{3}{2}t-\frac{1}{2}},&\;\textrm{if}\;t\in[-1,0];\\
			\dfrac{\frac{5}{2}t-4}{\frac{3}{2}t-2},&\;\textrm{if}\;t\in[0,1];\\
			t+2,&\;\textrm{otherwise}.
			\end{dcases}\]
		\begin{figure}[!ht]
			\centering
			\includegraphics[scale=.4]{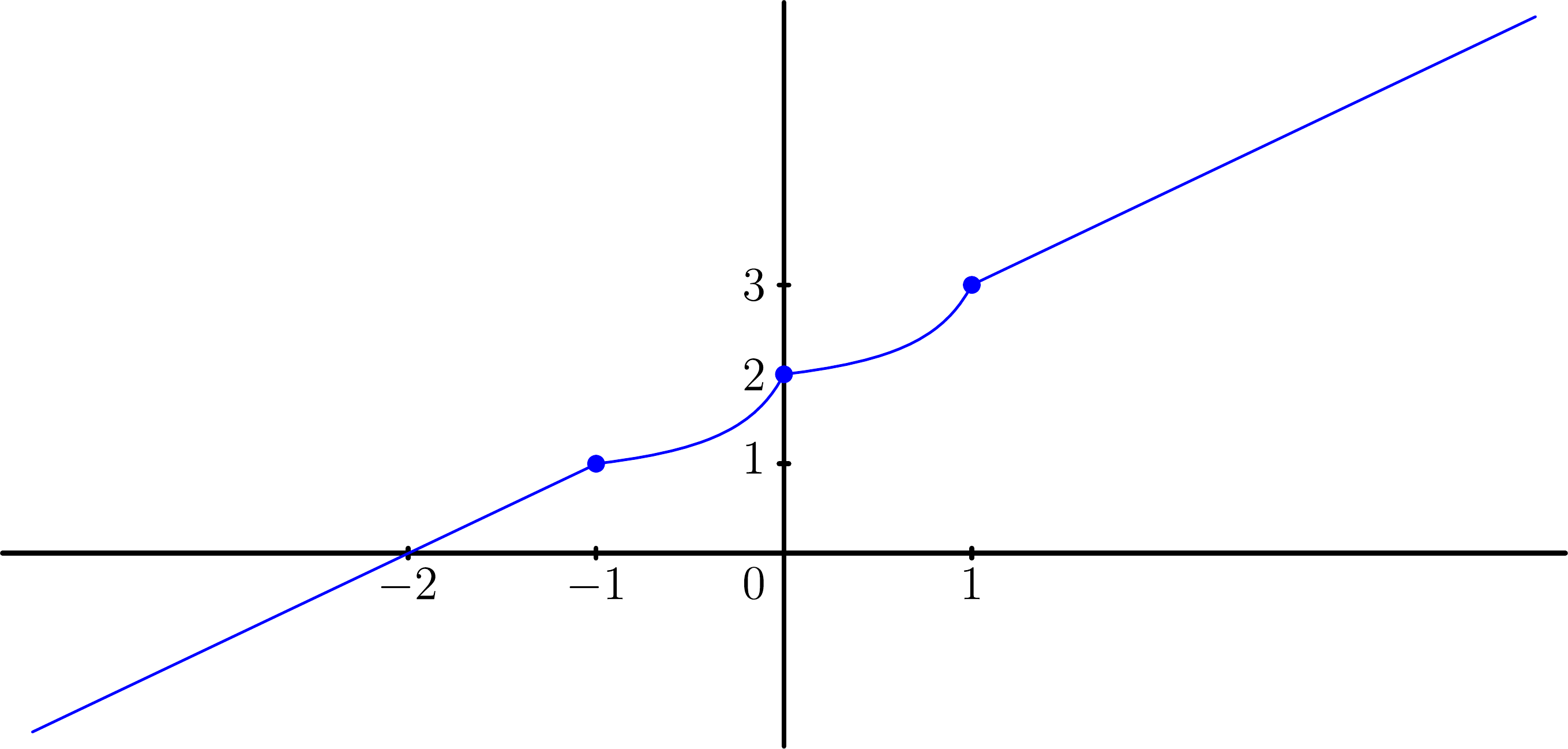}
			\caption{Graph of \(z^{2}\).}
		\end{figure}
		Considering the relation \(t\sim t+1\), we define
		\(C_{0}\coloneqq\left(-\infty,0\right]/t\sim t+1\) and \(C_{1}\coloneqq \left[1,+\infty\right)/t\sim t+1\).
		\noindent Then we get the Mather invariant
		\begin{align*}
			z^{\infty}\colon C_{0} &\longrightarrow C_{1} \\
			\left[t\right] &\longmapsto z^{\infty}\left(\left[t\right]\right)=\left[z^{2}\left(t\right)\right].
		\end{align*}
		\noindent The lift of this map making the following diagram commute
		\[\xymatrix{\ar @{} [dr] |{\circlearrowright}
		\mathbb{R} \ar[d]_-{p_{0}} \ar[r]^-{Z} & \mathbb{R} \ar[d]^-{p_{1}} \\
		C_{0} \ar[r]_-{z^{\infty}} & C_{1}}\]
		\noindent is given by the periodic extension of the restriction of \(z^{2}\) 
		defined on \(\left[-1,0\right]\) by
		\[Z\left(t\right) = z^{2}(t-x)+x,\]
		\noindent if \(x-1\leq t\leq x\), where \(x\in\mathbb{Z}\). Then the 
		centralizer of \(Z\) is \(\left(\mathbb{Z},+\right)\). Moreover, notice that 
		\(Z\notin H\).
		\begin{figure}[!ht]
			\centering
			\includegraphics[scale=.4]{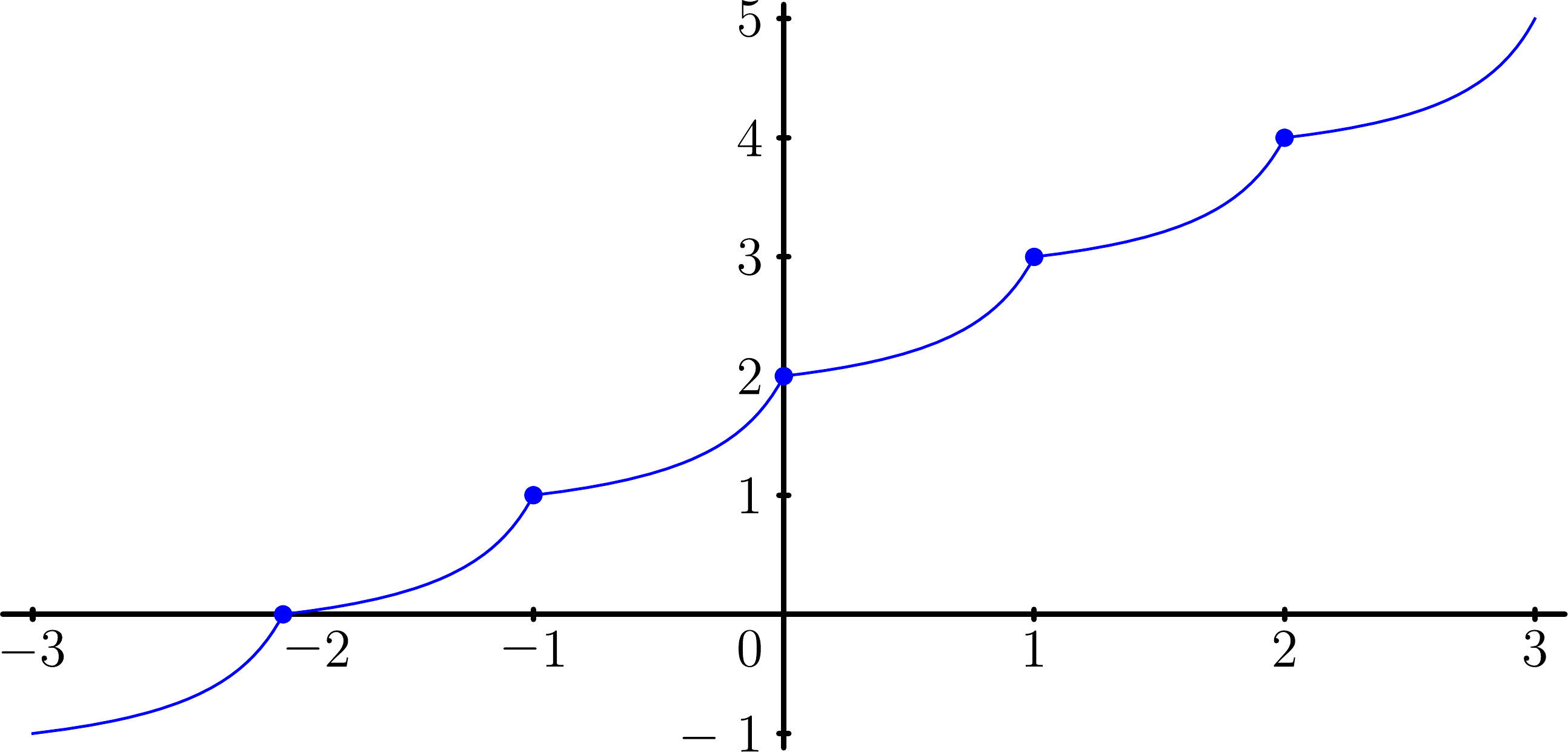}
			\caption{Graph of the lift \(Z\).}
		\end{figure}
	\end{ex}

\subsubsection{Main result about centralizers} We can now give a structure result
for centralizers in $H$ (Theorem A in the introduction).
\begin{thm}
	Given \(z\in H\), then
	\[C_{H}\left(z\right)\cong\left(\mathbb{Z},+\right)^{n}\times\left(\mathbb{R},+\right)^{m}\times H^{k},\]
	for suitable \(k,m,n\in\mathbb{Z}_{\geq 0}\).
\end{thm}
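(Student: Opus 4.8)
The plan is to reduce the structure of $C_H(z)$ for an arbitrary $z \in H$ to the cases already handled in this section by decomposing $z$ according to its fixed-point set. First I would observe that $\mathrm{Fix}(z) = \{t \in \mathbb{R} \mid z(t)=t\}$ is a closed subset of $\mathbb{R}$ and, since $z$ is piecewise projective with finitely many breakpoints, its complement $\mathbb{R} \setminus \mathrm{Fix}(z)$ is a finite disjoint union of open intervals $I_1, \ldots, I_r$ (some possibly unbounded, i.e.\ of the form $(-\infty,p)$ or $(q,+\infty)$). On each $I_j$ the restriction $z|_{I_j}$ is a one-bump function, lying above or below the diagonal. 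The key structural fact is that any $g \in C_H(z)$ must permute the connected components of $\mathbb{R}\setminus\mathrm{Fix}(z)$: indeed $g$ preserves $\mathrm{Fix}(z)$ setwise (since $g^{-1}zg = z$ forces $z(g(t))=g(t) \iff z(t)=t$), hence permutes the components of its complement, and being orientation-preserving it preserves their left-to-right order. Therefore $g$ must send each component to itself (it cannot cyclically permute them without reversing order, as they are linearly ordered and the two unbounded ones are distinguished).

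Next I would use this to obtain a decomposition $C_H(z) \cong \prod_{j=1}^{r} C^{(j)}$, where $C^{(j)}$ is the group of elements of $H$ supported on the closure of $I_j$ that commute with $z|_{I_j}$ (extended by the identity elsewhere). More precisely, since every $g \in C_H(z)$ fixes each $I_j$ setwise and fixes $\mathbb{R}\setminus\bigcup I_j$ pointwise? — here one must be slightly careful: $g$ need not fix $\mathrm{Fix}(z)$ pointwise when $\mathrm{Fix}(z)$ has interior. But on any closed interval of $\mathrm{Fix}(z)$ with nonempty interior, $g$ can be an arbitrary element of $H$ supported there, contributing a free factor isomorphic to $H$ (after the bounded-interval identification of Lemma~\ref{thm:unbounded-to-bounded}, the group of maps of $H$ supported on a compact subinterval whose endpoints are in $\mathcal{P}_{\mathbb{R}}$ is isomorphic to $H$ itself, by Lemma~\ref{monod-transitive}). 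This accounts for the $H^k$ factor, where $k$ is the number of maximal closed intervals of $\mathrm{Fix}(z)$ with nonempty interior. The remaining factors come from the one-bump pieces $z|_{I_j}$.

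For each one-bump piece I would then invoke the propositions already proved. After conjugating (via a transitivity element from Lemma~\ref{monod-transitive}) to normalize the interval, $z|_{I_j}$ becomes an element of $H^{<}$ or $H^{>}$, and its germ behavior at the endpoints of $I_j$ falls into exactly the cases analyzed: if the relevant initial/final slope is $\ne 1$ we are in the situation of Proposition~\ref{monod-centralizer-breakpoints-affine} (or Corollary~\ref{thm:centralizers-affine}, Corollary~\ref{cor:centralizers-translations} if $z|_{I_j}$ is genuinely affine with nontrivial slope), giving a factor $\cong (\mathbb{Z},+)$ or $\cong (\mathbb{R},+)$; if both endpoint slopes are $1$ (the translation-germ case) we are in the situation of Proposition~\ref{thm:Centralizer-Last-Case}, again giving $(\mathbb{Z},+)$ or $(\mathbb{R},+)$. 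In the affine-translation case ($z|_{I_j}$ conjugate to a pure translation) Corollary~\ref{cor:centralizers-translations} gives $(\mathbb{R},+)$. Summing the contributions over all pieces yields $n$ copies of $(\mathbb{Z},+)$ and $m$ copies of $(\mathbb{R},+)$ for appropriate $n,m \geq 0$, and combining with the $H^k$ factor from the interior-of-fixed-set pieces gives the claimed isomorphism.

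The main obstacle I anticipate is making the ``decomposition into a direct product'' step fully rigorous: one must argue that an element of $C_H(z)$ is genuinely determined by, and can be freely prescribed as, its restrictions to the various maximal intervals on which $z$ is a one-bump function together with its restrictions to the closures of the components of $\mathrm{int}(\mathrm{Fix}(z))$ — and that these restrictions are independent and each individually ranges over the corresponding centralizer group. The independence and the fact that prescribing each piece yields a well-defined element of $H$ (with breakpoints only in $\mathcal{P}_{\mathbb{R}}$ and affine germs at $\pm\infty$) need the transitivity Lemma~\ref{monod-transitive} and a gluing argument; the delicate point is the behavior at a boundary point shared between a one-bump interval and an interval of fixed points, where one must check continuity and that no spurious breakpoint outside $\mathcal{P}_{\mathbb{R}}$ is created. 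Once this bookkeeping is done, the theorem follows by assembling the per-piece computations of the preceding propositions.
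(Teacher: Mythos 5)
Your proposal is correct and follows essentially the same route as the paper: decompose $\mathbb{R}$ along $\partial\mathrm{Fix}(z)$, note that any $g\in C_H(z)$ preserves $\mathrm{Fix}(z)$ setwise and hence (being order-preserving) fixes each boundary point, restrict to the subgroups $H([t_i,t_{i+1}])\cong H$, and apply Corollaries \ref{thm:centralizers-affine} and \ref{cor:centralizers-translations} and Propositions \ref{monod-centralizer-breakpoints-affine} and \ref{thm:Centralizer-Last-Case} on each piece. Your discussion of the gluing and direct-product bookkeeping is in fact more detailed than what the paper records.
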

\begin{proof}
The element $z$ has finitely many (possibly unbounded) intervals of fixed points, so its boundary
	\(\partial\mathrm{Fix}\left(z\right)=\left\{t_{0}<t_{1}<\ldots<t_{n}\right\}\)
	has only finitely points.
	If \(g\in C_{H}\left(z\right)\), then \(g\) fixes 
	\(\partial\mathrm{Fix}\left(z\right)\) setwise.
	Moreover, since $g$ is order-preserving, it must fix $t_i$ for each $i=1,\ldots,n$.
	As a consequence, we can restrict to study centralizers in each of
	the subgroups 
	\[
	H\left(\left[t_{i},t_{i+1}\right]\right)=\{h \in H \mid h(t)=t, \forall t \not \in
	[t_{i},t_{i+1}]\} \cong H,
	\]
	where \(i=0,1,\ldots,n-1\). If \(z\left(t\right)=t\) on 
	\(\left[t_{i},t_{i+1}\right]\), then it is easy to see that
	\(C_{H([t_i,t_{i+1}])}\left(z\right)=H([t_i,t_{i+1}])\cong H\). Otherwise,
	Corollaries \ref{thm:centralizers-affine} and \ref{cor:centralizers-translations}
	and 
	Propositions
	\ref{monod-centralizer-breakpoints-affine} and 
	\ref{thm:Centralizer-Last-Case}, cover the remaining cases (when $z$ is conjugate to an affine map or entirely above or below the diagonal) showing that either
	\(C_{H([t_i,t_{i+1}])}\left(z\right)\cong \left(\mathbb{R},+\right)\) or
	\(C_{H([t_i,t_{i+1}])}\left(z\right)\cong \left(\mathbb{Z},+\right)\).
\end{proof}
\bibliographystyle{acm}
\bibliography{references}
\end{document}